\numberwithin{equation}{section}
\newtheorem{theorem}{Theorem}[section]
\newtheorem{proposition}[theorem]{Proposition}
\newtheorem{lemma}[theorem]{Lemma}
\newtheorem{prop}[theorem]{Proposition}
\newtheorem{definition}[theorem]{Definition}
\newtheorem{remark}[theorem]{Remark}
\newtheorem{assumptions}[theorem]{Assumptions}
\newcommand{\E}{\noindent{$\mathbb{E}$ \ }}
\newcommand{\baonew}[1]{{#1}}
\newcommand{\bao}[1]{{#1}}
\newcommand{\stef}[1]{{\color{blue} #1}}
\newcommand{\bra}[1]{\left(#1\right)}
\newcommand{\tto}{\theta_{-t}\omega}
\newcommand{\LO}[1]{L^{#1}(D)}
\def\R{\mathbb{R}}
\def\C{\mathbb{C}}
\def\N{\mathbb{N}}
\def\P{\mathbb{P}}
\def\E{\mathbb{E}}
\def\cA{\mathcal{A}}
\def\cD{\mathcal{D}}
\def\cF{\mathcal{F}}
\def\cL{\mathcal{L}}
\def\cX{\mathcal{X}}	
\def\cY{\mathcal{Y}}
\def\txtd{{\textnormal{d}}}
\def\txtD{{\textnormal{D}}}
\newcommand{\norm}[1]{\left\lVert #1 \right\rVert}
\newcommand{\abs}[1]{\left| #1 \right|}
\date{}
\title{Pathwise mild solutions for superlinear stochastic evolution equations and their attractors}
\author[1]{Alexandra Blessing (Neam\c{t}u)}
\author[1]{Tim Seitz}
\author[2]{Stefanie Sonner}
\author[3]{Bao Quoc Tang}
\affil[1]{\small  Department of Mathematics and Statistics, University of Konstanz, Germany.\break
\href{mailto:alexandra.neamtu@uni-konstanz.de}{alexandra.blessing@uni-konstanz.de}, \href{tim.seitz@uni-konstanz.de}{tim.seitz@uni-konstanz.de}}
\affil[2]{\small Radboud University, IMAPP - Mathematics, The Netherlands\break   
\href{stefanie.sonner@ru.nl}{stefanie.sonner@ru.nl}}
\affil[3]{\small Department of Mathematics and Scientific Computing, University of Graz, Austria\break  
\href{mailto:quoc.tang@uni-graz.at}{quoc.tang@uni-graz.at}}
\begin{document}
     
\maketitle
        
     \begin{abstract} We investigate stochastic parabolic evolution equations with time-dependent random generators and locally Lipschitz continuous drift terms.~Using pathwise mild solutions, we construct an infinite-dimensional stationary Ornstein-Uhlenbeck type process, which is shown to be tempered in suitable function spaces.~This property, together with a bootstrapping argument based on the regularizing effect of parabolic evolution families, is then applied to prove the global well-posedness and the existence of a  random attractor for reaction-diffusion equations with random {non-autonomous} generators and nonlinearities  satisfying certain growth and dissipativity assumptions.
     \end{abstract}

\tableofcontents
        
\section{Introduction}

We aim to investigate the well-posedness and long-time behavior of solutions of parabolic superlinear stochastic evolution equations with additive noise of the form 
		\begin{align}\label{eq:MainEq}
			\begin{cases}
                \txtd u(t) &= [A(t,\omega)u(t)+F(u(t)) + {f}]~\txtd t + \sigma \txtd W_t\\
                u(0)&=u_0(\omega),
            \end{cases}
		\end{align}
		where $(W_t)_{t\geq 0}$ is an infinite-dimensional Brownian motion on a filtered probability space $(\overline{\Omega},\mathcal{\overline{F}},\mathbb{\overline{P}}, (\mathcal{F}_t)_{t\geq 0})$, $\sigma>0$ denotes the intensity of the noise,
  $(A(t,\omega))_{t\in \R,\,\omega\in \overline{\Omega}}$ is a family of random, time-dependent operators, $F$ a nonlinear drift term, {$f$ a given external force}, and the initial datum $u_0$ is $\mathcal{F}_0$-measurable.
More specifically, we establish the local well-posedness for such problems and study properties of an  {Ornstein-Uhlenbeck type} process in an abstract setting. These results are subsequently applied to prove the existence of a random attractor for reaction-diffusion equations with random generators under suitable growth and dissipativity assumptions on the nonlinearity. 

Several dynamical aspects have been investigated for such random evolution equations without noise, i.e.~for problem \eqref{eq:MainEq} with $\sigma=0$. For instance, principal Lyapunov exponents and Floquet theory were analyzed in~\cite{MShen1,book,MShen2}, stable and unstable manifolds in~\cite{CDLS} and multiplicative ergodic theorems in~\cite{CDLS,lns}. 
The existence of random attractors for problem \eqref{eq:MainEq} with $\sigma>0$ was shown in \cite{kuehn2021random}, however under {additional regularity assumptions on the noise} and global Lipschitz continuity of the nonlinearity $F$.~Here, we significantly generalize these results  and investigate the existence of random attractors for drift terms $F$ that are only locally Lipschitz continuous. To this end, as in \cite{kuehn2021random}, we use the concept of pathwise mild solutions. \\
  
 In order to capture the asymptotic behavior of~\eqref{eq:MainEq} we rely on the random dynamical systems approach.~This requires, like in the references mentioned above, the following structural assumption for the random generators,
\[ 
A(t,\omega):=A(\theta_t\omega),~\text{ for all } t\in \R \text{ and }\omega\in\Omega, 
\]
where $(\Omega,\mathcal{F},\mathbb{P},(\theta_t)_{t\in\R})$ is an ergodic metric dynamical system.~This property ensures that the random parabolic evolution family $(U(t,s,\omega))_{t\geq s,\omega\in\Omega}$ generated by the operators
$(A(t,\omega))_{t\in \R,\,\omega\in \Omega}$
forms a random dynamical system. For further details, we refer to Subsection~\ref{sd}. The mild solution of~\eqref{eq:MainEq} should be given by the variation of constants formula
 \begin{align*}
    u(t)=U(t,0,\omega)u_0+ \int_0^t U(t,s,\omega) {(F(u(s)) +f )}~\txtd s+ \sigma \int_0^t U(t,s,\omega)~\txtd W_s(\omega).
        \end{align*}
However, the stochastic integral is not well-defined due to measurability issues.~This problem can be resolved by formally applying integration by parts, which leads to the following pathwise representation formula
        \begin{align}\label{eq:defpwms}
        \begin{split}
            u(t)&= U(t,0,\omega)u_0 + \sigma U(t,0,\omega)W_t(\omega) + \int_0^t U(t,s,\omega) {(F(u(s)) +f )}~\txtd s \\
            & -  \sigma\int_0^t U(t,s,\omega) A(s,\omega)(W_t(\omega)-W_s(\omega))~\txtd s.
            \end{split}
        \end{align}
A process satisfying \eqref{eq:defpwms} is called a pathwise mild solution of \eqref{eq:MainEq}.~Pathwise mild solutions were introduced in \cite{Pronk2014} and their existence and uniqueness investigated for stochastic parabolic equations of the form \eqref{eq:MainEq} with globally Lipschitz continuous nonlinearities $F$ (and more general noise terms).~These well-posedness results for pathwise mild solutions were applied in~\cite{kuehn2021random} to investigate the existence of global and exponential random pullback attractors for problem \eqref{eq:MainEq} with globally Lipschitz nonlinearity $F$.~Here, we relax these assumptions in the specific case of reaction-diffusion equations allowing for locally Lipschitz continuous nonlinearities with superlinear growth and more general noise, which significantly extends our previous results in~\cite{kuehn2021random}.~To the best of our knowledge,~\cite{kuehn2021random} is the only work that addresses random attractors for stochastic parabolic evolution equations of the form \eqref{eq:MainEq} using the concept of pathwise mild solutions. On the other hand, there is a vast literature on random attractors for stochastic reaction-diffusion equations driven by real-valued additive or linear multiplicative noise. We only mention a few references, e.g.~\cite{Lu,GuWang,Bao,cao,liu2017long}. These works heavily rely on the standard approach to prove the existence of random attractors, where a finite-dimensional stationary Ornstein-Uhlenbeck process is used to transform the stochastic problem into a partial differential equation with random, non-autonomous coefficients.~{For infinite-dimensional additive noise, the existence of random attractors for reaction-diffusion equations has been obtained in~\cite{Wang} and the references specified therein using mean random dynamical systems, instead of a pathwise approach as presented here.} 
{For general criteria for the existence of random attractors for stochastic partial differential equations with infinite-dimensional additive and finite-dimensional linear multiplicative noise based on a variational approach we refer to~\cite{Gess2011,Gess1,Gess2,Andre}.} 

\medskip
In conclusion, the main novelties of this work are two-fold: firstly, we consider stochastic reaction-diffusion equations in a bounded domain $D\subset\R^N$ with time-dependent random operators and infinite-dimensional noise and secondly, we work {with} {nonlinearities that are only locally Lipschitz continuous}. Furthermore, different from classical approaches, we use the framework of pathwise mild solutions introduced in~\cite{Pronk2014}.~This leads to several technical difficulties. For example, due to the time-dependent random differential operators and the only locally Lipschitz continuous nonlinearity we cannot directly obtain a-priori estimates for the solution in more regular functional spaces, more specifically, in the fractional power spaces corresponding to the random non-autonomous generators. Such estimates are essential for the global well-posedness of~\eqref{eq:MainEq} and the compactness argument, which is needed for the existence of the random attractor.~In order to overcome this issue, we derive a-priori bounds in  suitable $L^{\rho+1}$-spaces where $\rho$ is determined by the subcritical growth of the nonlinearity $F$. These bounds combined with a bootstrapping argument based on the regularizing property of parabolic evolution families will entail a-priori estimates of the solution in more regular spaces, similarly as in~\cite{Caraballo} for deterministic reaction-diffusion equations with non-autonomous generators.~This technique heavily relies on the construction of a stationary infinite-dimensional Ornstein-Uhlenbeck process in Section~\ref{ou}, which is shown to be tempered in $L^2$ as well as in the fractional power spaces corresponding to  the random non-autonomous generators.~Furthermore, Sobolev embeddings imply the temperedness of this process in $L^q$, where $q$ depends on the dimension $N$ of the spatial domain $D$.~Such tools are not required to prove the existence of random attractors in simpler settings, for instance for reaction-diffusion equations with finite dimensional noise as considered in~\cite{Lu,GuWang,Bao,cao}.~{Moreover, the case of random non-autonomous generators is not covered by the references mentioned above.}~As a by-product, our work also extends the existence results for pullback attractors in~\cite{Caraballo} to a random setting with random non-autonomous generators and infinite-dimensional additive noise. \\ 
        
We finally mention recent works addressing random attractors for stochastic parabolic partial differential equations driven by nonlinear multiplicative noise {that use a pathwise construction of the solutions based on rough path theory~\cite{Lin2023,BS} or fractional calculus~\cite{fractional}}, however, the nonlinearity $F$  is assumed to be globally Lipschitz.~An extension of the techniques employed here to nonlinear multiplicative noise will be pursued in future works.\\ 

We expect that a possible future application of our results are stochastic parabolic partial differential equations on random moving domains, as considered in~\cite{djurdjevac2021linear}. However, a suitable well-posedness theory for such problems is still lacking.~After transforming the problem into a random partial differential equation on a fixed domain, the differential operators should have the same structure as in~\eqref{eq:MainEq}.~Therefore, similar measurability issues should occur in the classical definition of mild solutions, which we resolve here using pathwise mild solutions.~We finally mention that attractors for stochastic reaction-diffusion equations on time-evolving (not random) domains were investigated, for instance in~\cite{crauelkloeden}.\\

Our paper is structured as follows.~In Section~\ref{sec:p} we provide the required background on random dynamical systems and summarize preliminary results on random parabolic evolution operators and pathwise mild solutions.~In Section~\ref{sec:loc:sol} we show the local well-posedness of the abstract problem~\eqref{eq:MainEq} under suitable growth restrictions on the nonlinearity. 
In Section~\ref{ou} a stationary infinite-dimensional Ornstein-Uhlenbeck type process is constructed and the temperedness of this process is shown in suitable function spaces.
We then apply these results to study random attractors of reaction-diffusion equations with non-autonomous random differential operators and additive noise. In Section~\ref{global} we show   via a bootstrapping argument that the local solutions obtained in Section \ref{sec:loc:sol} are global if $F$ satisfies an additional dissipativity assumption. This is done at the level of a partial differential equation with random non-autonomous coefficients obtained from~\eqref{eq:MainEq} by subtracting the stationary infinite-dimensional Ornstein-Uhlenbeck type process.~This is essential for the estimates performed in Section~\ref{main} to show the existence of a random attractor.~Finally, we generalize the results for problems with higher-order uniformly elliptic random differential operators in Section~\ref{ho}.

\subsection*{Acknowledgements.} A. Blessing was supported by the DFG grant 543163250.~A. Blessing acknowledges support from DFG CRC/TRR 388 {\em Rough Analysis, Stochastic Dynamics and Related Fields}, Project A06. 

B.Q. Tang is supported by 
the FWF project number I-5213 (Quasi-steady-state approximation for PDE). Part of this work is completed during the research stay of B.Q. Tang at the Vietnam Institute for Advanced Study in Mathematics (VIASM) within the project \textit{Nonlinear problems in physics and geometry}. The institute's generous support and hospitality is greatly acknowledged.

\section{Preliminaries}\label{sec:p}
		

        \subsection{Random dynamical systems}\label{sd}
        Let {$\mathcal X$} be a separable, reflexive Banach space of type 2 
         and $(\Omega,\mathcal{F},\P)$ be a probability space.~As well-known examples of type 2 Banach spaces we mention $L^p(D)$ for $p\geq 2$, where $D\subset \R^N$, for $N\geq 1$, is a  bounded domain. To introduce an appropriate model for the noise we first recall the notions of metric and random dynamical systems~\cite{Arnold1998}.
			\begin{definition}\label{def:MDS}
				If $(\theta_t)_{t\in \R}$ is a family of mappings $\theta_t:\Omega\to\Omega$ such that for every $A\in \mathcal{F}$ and $t\in \R$ we have $\P(\theta_t^{-1} A)=\P(A)$, then $(\theta_t)_{t\in \R}$ is called measure-preserving. 
				
				The quadruple $(\Omega,\mathcal{F},\P,(\theta_t)_{t\in \R})$ is called a metric dynamical system if
				\begin{itemize}
					\itemsep -4pt
					\item[i)] $\theta_0=\textnormal{Id}_\Omega$,
					\item[ii)] $(t,\omega)\mapsto \theta_t\omega$ is $\mathcal{B}(\R)\otimes \mathcal{F}-\mathcal{F}$ measurable,
					\item[iii)] and $\theta_{t+s}=\theta_t\circ \theta_s$ holds for all $t,s \in \R$. 
				\end{itemize}
				We call it an ergodic metric dynamical system if for any $A\in \mathcal{F}$, which is $(\theta_t)_{t\in \R}$-invariant, we have $\P(A)\in \{0,1\}$.
			\end{definition}
			Similar to \cite{kuehn2021random} we want to introduce a metric dynamical system associated with a two-sided $\cX$-valued {Brownian motion defined on a filtered probability space $(\overline{\Omega},\overline{\cF},\overline{\P},(\cF_t)_{t\geq 0})$, where  $(\cF_t)_{t\geq 0}$ is the natural filtration of the Brownian motion.}
   To this end, let $H$ be a Hilbert space, $(W_H(t))_{t\geq 0}$ be an $H$-cylindrical Brownian motion and $G:H\to \cX$ be a $\gamma$-radonifying operator. This means that for any sequence $(\gamma_n)_{n\in \N}$ of independent Gaussian random variables, the series
			\begin{align}\label{eq:GammaRadonifying}
				\E\left[\bigg\|\sum_{n\in \N}\gamma_n Ge_n\bigg\|^2_{\cX}\right]<\infty
			\end{align}
			is finite, where $(e_n)_{n\in \N}$ is an orthonormal basis in $H$. If $H$ is isomorphic to $\cX$, then \eqref{eq:GammaRadonifying} implies that $G\in \cL_2(H)$, where $\cL_2(H)$ denotes the space of Hilbert-Schmidt operators. In this case $\|G\|_{\cL_2(H)}=\text{Tr}(GG^*)$. According to \cite[Proposition 8.8]{ISEM}, the series
			\begin{align}\label{eq:BrownianMotion}
				\sum_{n\in \N} W_H(t)\tilde{e}_nGe_n
			\end{align}
			converges almost surely and defines an $\cX$-valued Brownian motion with covariance operator $tGG^*$, where $(\tilde{e}_n)_{n\in \N}\subset \big(\textrm{ker}(G)\big)^\bot$ is an orthonormal basis. {Let $(W^1_t)_{t\geq 0}$ and $(W^2_t)_{t\geq 0}$ be two independent Brownian motions. We define the two-sided Brownian motion with values in $\cX$ by}
			\begin{align*}
				W_t:=
				\begin{cases}
					W_1(t), & t\geq 0,\\
					W_2(-t), & t< 0.
				\end{cases}
			\end{align*} 
   Since we are working in the framework of random dynamical systems, we aim to introduce a canonical probability space associated to $(W_t)_{t\geq 0}$ and identify $W_t(\omega)=\omega_t$. Therefore, we denote by $C_0(\R;\cX)$ the subset of continuous functions which are zero in zero and equip it with the compact open topology.~Furthermore, we denote by $\P_W$ the Wiener measure. Then Kolmogorov's theorem yields the existence of a probability space $(C_0(\R;\cX),\mathcal{B}(C_0(\R;\cX)),\P_W)$ such that the Brownian motion is the canonical process $W_t(\omega):=\omega_t$ for $\omega\in C_0(\R;\cX)$. We introduce the Wiener shift by 
\begin{align*}
				\theta_t \omega(\cdot):=\omega(t+\cdot)-\omega(t),~\text{ for } t\in\R, \omega\in C_0(\R;\cX)
    \end{align*}
	and obtain the ergodic metric dynamical system $(C_0(\R;\cX),\mathcal{B}(C_0(\R;\cX)),\P_W,(\theta_t)_{t\in \R})$.
 

    In the next lemma, we collect standard growth properties of the noise that we will frequently use.
\begin{lemma}{\rm (\cite[Lemma 3.3]{Gess2011})}\label{lem:GrowthPropBM}
				There exists a $(\theta_t)_{t\in \R}$-invariant subset $\Omega\subset C_0(\R;\cX)$ of full measure, with the following properties:
\begin{itemize}
\itemsep -4pt
    \item[i)] For all $\omega \in \Omega$ and any $\varepsilon>0$ there exists a time $T_0(\varepsilon,\omega)>0$ such that for any $\abs{t}\geq T_0(\varepsilon,\omega)$ we have  linear growth of $\omega$, i.e. 
	\begin{align}\label{eq:GrowthBM}
		\norm{\omega(t)}_{\cX}\leq \varepsilon \abs{t}.
	\end{align}
    \item[ii)] For any  $\gamma\in(0,\frac{1}{2})$ and any $[s,r]\subset \R$ there exist a constant $c_{s,r,\gamma}(\omega)>0$ such that $c_{s,r,\gamma}(\cdot)\in L^1(\Omega)$ and 				\begin{align}\label{eq:HölderCondBM}
	\norm{\omega}_{C^\gamma([s,r];\cX)}\leq c_{s,r,\gamma}(\omega).
	\end{align}
\end{itemize}
\end{lemma}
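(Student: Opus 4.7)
The plan is to exhibit $\Omega$ as the intersection of two full-measure sets on which (i) and (ii) respectively hold, and then to verify that this intersection is itself $(\theta_t)_{t\in\R}$-invariant, which obviates the need to intersect over uncountably many shifts.

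For (i), I would start from block suprema. Setting $M_n := \sup_{t \in [n, n+1]} \norm{\omega(t)}_\cX$ for $n \in \N$, a combination of Brownian scaling and Fernique's theorem for the Gaussian measure that the $\cX$-valued Brownian motion induces on $C([0,n+1];\cX)$ gives $\E[M_n^p] \le C_p\, n^{p/2}$ for every $p \geq 1$. Taking $p > 2$ and applying Chebyshev's inequality yields $\P(M_n > \varepsilon n) \le C_{p,\varepsilon}\, n^{-p/2}$, which is summable; Borel--Cantelli then produces a full-measure event on which $M_n/n \to 0$, i.e.\ $\norm{\omega(t)}_\cX = o(|t|)$ as $t \to \infty$. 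The analogous argument applied to the independent copy $W_2$ handles $t \to -\infty$. Intersecting over $\varepsilon = 1/k$, $k \in \N$, gives a full-measure set $\Omega^{(1)}$ on which (i) holds.

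For (ii), I would appeal to Kolmogorov--Chentsov: Gaussianity of the increments yields $\E\norm{\omega(t) - \omega(t')}^p_\cX \le C_p |t - t'|^{p/2}$, so for any $\gamma \in (0, 1/2)$ one can choose $p$ large enough that $\gamma < \tfrac{1}{2} - \tfrac{1}{p}$ and obtain a continuous version whose restriction to any compact interval $[s, r]$ lies in $C^\gamma([s, r]; \cX)$ with $\E\norm{\omega}^p_{C^\gamma([s,r];\cX)} < \infty$; in particular $c_{s,r,\gamma}(\omega) := \norm{\omega}_{C^\gamma([s,r];\cX)} \in L^1(\Omega)$. Collecting countably many exceptional sets over rational $\gamma \in (0,\tfrac{1}{2})$ and rational endpoints $s,r$, and then transferring to arbitrary data by monotonicity in $\gamma$ and in $[s, r]$, yields a full-measure set $\Omega^{(2)}$ on which (ii) holds.

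Finally, I would verify that $\Omega := \Omega^{(1)} \cap \Omega^{(2)}$ is $(\theta_t)$-invariant. For $\omega \in \Omega$ and $\tau \in \R$, one has $\norm{(\theta_\tau \omega)(t)}_\cX \le \norm{\omega(t + \tau)}_\cX + \norm{\omega(\tau)}_\cX = o(|t|)$ as $|t| \to \infty$ by (i) applied to $\omega$, while the Hölder seminorm of $\theta_\tau \omega$ on $[s, r]$ coincides with that of $\omega$ on $[s + \tau, r + \tau]$, which is finite by (ii). Hence $\theta_\tau \Omega \subseteq \Omega$ for every $\tau \in \R$. The main obstacle is less any single hard estimate than the bookkeeping---ensuring that the countable intersections genuinely produce a single set on which every admissible choice of $\varepsilon, \gamma, s, r$ can be accommodated simultaneously. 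Once this is arranged, invariance under $(\theta_t)_{t\in\R}$ follows automatically from the stationarity of the increments of $W$.
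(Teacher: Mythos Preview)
The paper does not give a proof of this lemma at all: it is stated with the citation ``(\cite[Lemma 3.3]{Gess2011})'' and used as a black box, so there is no in-paper argument to compare your sketch against. Your outline is a correct and standard way to establish the result: Borel--Cantelli on block suprema for the sublinear growth in (i), Kolmogorov--Chentsov for the H\"older regularity and moment bound in (ii), and a direct check that both properties are preserved under the shift $\theta_\tau$ to obtain invariance without intersecting over all shifts. One minor redundancy: once you have the full-measure event $\{M_n/n \to 0\}$, property (i) already holds for \emph{every} $\varepsilon>0$ on that set, so the further intersection over $\varepsilon = 1/k$ is unnecessary.
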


\begin{remark}\label{rem:decaynoise}
Condition \eqref{eq:GrowthBM} implies that $\omega\in\Omega$ has subexponential growth, meaning that for any $\varepsilon>0$ there exists a time $t_0=t_0(\varepsilon,\omega)$ and a constant $c_\varepsilon(\omega,t_0)$ such that for all $\abs{t}\geq t_0$, we have
\begin{align}\label{eq:SubExpBM}
					\norm{\omega(t)}_{\cX}\leq c_\varepsilon(\omega,t_0) e^{\varepsilon \abs{t}}.
				\end{align} 
\end{remark}
            
\begin{remark}\label{mds:final}
  From now on, we consider the set $\Omega$ satisfying the properties of Lemma \ref{lem:GrowthPropBM}. We equip this set with the trace $\sigma$-algebra $\mathcal{F}$ $:=\Omega\cap \mathcal{B}(C_0(\mathbb{R};\cX))$ and take the restriction $\P$ of $\mathbb{P}_W$. Again, by Komogorov’s theorem about the existence of a H\"older-continuous version, this set is contained in $C_0(\R;\cX)$, has full measure and is $(\theta_t)_{t\in\R}$-invariant. Moreover, the new quadruple which we denote by  $(\Omega,\mathcal{F},\mathbb{P},(\theta_t)_{t\in\R})$ forms again a metric dynamical system.
\end{remark}

         Next, we recall the definition of a random dynamical system and introduce tempered random sets.
            \begin{definition}
				A continuous random dynamical system (RDS) on $\cX$ over a metric dynamical system $(\Omega,\mathcal{F},\P,(\theta_t)_{t\in\R})$ is a mapping 
				\[\phi:\R^+\times \Omega\times \cX\to \cX,\quad  (t,\omega,x)\mapsto \phi(t,\omega,x),\]
				which is $(\mathcal{B}(\R^+)\otimes \mathcal{F}\otimes \mathcal{B}(\cX),\mathcal{B}(\cX))$-measurable and has the following properties:
				\begin{itemize}
					\itemsep -4pt
					\item[i)] $\phi(0,\omega,\cdot)=\textnormal{Id}_{\cX}$ for every $\omega\in \Omega$,
					\item[ii)] the (perfect) cocycle property holds, which means that for all $\omega\in \Omega$, $t,s\in \R^+$ and $x\in \cX$ we have
					\begin{align*}
		\phi(t+s,\omega,x)=\phi(t,\theta_s \omega,\phi(s,\omega,x)),
					\end{align*}
					\item[iii)] the map $\phi(t,\omega,\cdot):\cX\to \cX$ is continuous for every $t\in \R^+$ and $\omega\in \Omega$.
				\end{itemize}
			\end{definition}

We further introduce another separable reflexive Banach space of type $2$ denoted by $\cY$. This function space can coincide with $\cX$, but in our application in Section \ref{sec:attr} we consider random sets in spaces that are different from the phase space  $\cX$.

\begin{definition}\label{def:RandomSet}
				A multifunction $\{B(\omega)\}_{\omega\in \Omega}$ of nonempty closed subsets $B(\omega)\subset \cY$ is called a {random set in $\cY$} if
				\begin{align*}
					\omega \mapsto \inf_{y\in B(\omega)}\norm{x-y}_{\cY},
				\end{align*}
				is a random variable for any $x\in \cY$. If, in addition, $B(\omega)$ is bounded (compact) for every $\omega\in \Omega$, then $\{B(\omega)\}_{\omega\in \Omega}$ is called bounded (compact) random set.
\end{definition}
            
\begin{definition}\label{def:Tempered}
				A random variable $Y:\Omega\to (0,\infty)$ is called tempered from above with respect to $(\theta_t)_{t\in \R}$ if
				\begin{align*}
					\limsup_{t\to\pm \infty} \frac{\ln^+ Y(\theta_t\omega)}{t}=0~\text { for all }\omega\in\Omega,
				\end{align*}
                where $\ln^+ x:=\max\{\ln x ,0\}$.
            	The random variable $Y$ is called tempered from below if $1/Y$ is tempered from above. A random variable is tempered if and only if it is tempered from above and from below.		
             
             Moreover, if $\{B(\omega)\}_{\omega\in \Omega}$ is a bounded random set {in $\mathcal Y$} and $\omega \mapsto \sup_{x\in B(\omega)}\norm{x}_{{\mathcal Y}}$ is tempered, then $\{B(\omega)\}_{\omega\in \Omega}$ is called a tempered set {in $\mathcal{Y}$}. {We denote by $\mathcal{D}(\mathcal Y)$ the collection of all tempered sets in $\mathcal Y$.}

			\end{definition}
            \begin{remark}\label{rem:EquivTemp}
                Equivalently to Definition \ref{def:Tempered}, a random variable $Y:\Omega\to (0,\infty)$ is tempered if
                \begin{align*}
                    \lim\limits_{t\to \pm \infty} e^{-\gamma|t|} Y(\theta_{-t}\omega) =0
                \end{align*}
                 for every $\gamma>0$.
            \end{remark}
             \begin{remark}\label{rem:temp}
                 Using Remark \ref{rem:EquivTemp} it is easy to see that finite sums and positive powers of tempered random variables are  tempered. 
            \end{remark}
			A sufficient condition for the temperedness of a random variable is given in \cite[Theorem 4.1.3]{Arnold1998}, namely, 
			\begin{align}\label{eq:TempCond}
				\E\Big[\sup_{t\in [0,1]} Y(\theta_t \omega)\Big]<\infty.
			\end{align}
			Note that the interval $[0,1]$ is used for simplicity, but we can replace it by an arbitrary interval $[a,b]$, see \cite[Lemma 8]{Bessaih2014}. This follows from a Borell-Cantelli argument based on the ergodicity of $(\theta_t)_{t\in\R}$.

Finally, we introduce the notion of random absorbing sets and attractors and recall a classical result providing sufficient conditions for the existence of  random attractors. 
     
            \begin{definition}
                A random set $\{K(\omega)\}_{\omega\in\Omega} \in \mathcal D{(\mathcal Y)}$ is called a random absorbing set for the RDS $\phi$ {in $\mathcal Y$}, if for any random tempered set $\{B(\omega)\}_{\omega\in\Omega}\in \mathcal D{(\mathcal X)}$ there is a time $t_B(\omega)>0$ such that
                \begin{equation*}
                    \phi(t,\theta_{-t}\omega,B(\theta_{-t}\omega)) \subset K(\omega) \quad \forall t\ge t_B(\omega),
                \end{equation*}
                for $\mathbb{P}$-a.s. $\omega\in\Omega$.
            \end{definition}

            In Section~\ref{sec:attr} we show the existence of a random absorbing set in a suitable space $\cY$ for the random dynamical system generated by~\eqref{eq:MainEq} on the phase space $\cX$, see Lemma~\ref{lem1} and Lemma~\ref{lem2_1}. This justifies the choice of a space $\cY$ possibly different from $\cX$ in the definitions above.

\begin{definition}
A random set $\{\cA(\omega)\}_{\omega\in\Omega}$ of $\cX$ is called a random attractor for a RDS $\phi$ if for $\mathbb P$-a.s. $\omega\in\Omega$,
\begin{itemize}[align=left]                    \item[i)] $\cA(\omega)$ is compact in $\cX$;
\item[ii)] $\{\cA(\omega)\}_{\omega\in\Omega}$ is invariant under $\phi$, i.e.
\begin{equation*}
\phi(t,\omega,\cA(\omega)) = \cA(\theta_t\omega) \quad \forall t\ge 0;
                    \end{equation*}
\item[iii)] $\{\cA(\omega)\}_{\omega\in\Omega}$ is pullback attracting in $\cD{(\mathcal X)}$,~i.e. for any $\{B(\omega)\}_{\omega\in\Omega} \in \cD\bao{(\mathcal X)}$ it holds
                    \begin{equation*}
                        \lim_{t\to\infty}d_{H}(\phi(t,\theta_{-t}\omega,B(\theta_{-t}\omega)), \cA(\omega)) = 0
                    \end{equation*}
                    where $d_H$ is the Hausdorff semi-distance in $\cX$.
                \end{itemize}

            \end{definition}
            
\begin{theorem}{\em~(\cite[Theorem 3.5]{FlSchm})}\label{thm:attractor_general}
                Let $\phi$ be a continuous RDS in $\cX$. Assume that $\phi$ has a random compact absorbing set $\{B(\omega)\}_{\omega\in\Omega}$ in $\cX$. 
                Then $\phi$ possesses a unique random attractor $\{\cA(\omega)\}_{\omega\in\Omega}$ which is defined by
                \begin{equation*}
                    \cA(\omega) =\bigcap_{\tau \ge 0}\overline{\bigcup_{t\ge \tau}\phi(t,\theta_{-t}\omega,B(\theta_{-t}\omega))}.
                \end{equation*}
            \end{theorem}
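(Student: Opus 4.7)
The plan is to verify that the candidate set $\cA(\omega)$, which is the $\omega$-limit set of the absorbing family $\{B(\omega)\}_{\omega \in \Omega}$ along the pullback direction, satisfies the three defining properties of a random attractor together with measurability, following the classical Crauel--Flandoli--Schmalfuss scheme. The whole argument rests on two inputs: the absorption property, which produces $t_B(\omega) > 0$ with $\phi(t, \theta_{-t}\omega, B(\theta_{-t}\omega)) \subseteq B(\omega)$ for all $t \ge t_B(\omega)$, and the \emph{compactness} of each fibre $B(\omega)$, which will be critical.

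For nonemptiness and compactness of $\cA(\omega)$, I would note that for $\tau \ge t_B(\omega)$ the union $\bigcup_{t \ge \tau}\phi(t, \theta_{-t}\omega, B(\theta_{-t}\omega))$ is contained in the compact set $B(\omega)$, so its closure is compact, and $\cA(\omega)$ is a decreasing intersection of nonempty compact sets. For pullback attraction I would argue by contradiction: if for some $\{D(\omega)\}_{\omega \in \Omega} \in \cD(\cX)$ there were $\varepsilon > 0$, $t_n \to \infty$, and $x_n \in D(\theta_{-t_n}\omega)$ with $d_H(\phi(t_n, \theta_{-t_n}\omega, x_n), \cA(\omega)) \ge \varepsilon$, then absorption of $\{D(\omega)\}$ into $\{B(\omega)\}$ would place $\phi(t_n, \theta_{-t_n}\omega, x_n)$ in $B(\omega)$ for large $n$, and compactness of $B(\omega)$ would extract a subsequential limit $y$ which by construction of $\cA(\omega)$ must lie in $\cA(\omega)$, contradicting the separation.

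Invariance is the technical heart of the proof. The forward inclusion $\phi(t, \omega, \cA(\omega)) \subseteq \cA(\theta_t\omega)$ follows by choosing approximating sequences $\phi(t_n, \theta_{-t_n}\omega, x_n) \to a \in \cA(\omega)$, applying the cocycle identity to rewrite $\phi(t + t_n, \theta_{-t_n}\omega, x_n) = \phi(t, \omega, \phi(t_n, \theta_{-t_n}\omega, x_n))$, using continuity of $\phi(t, \omega, \cdot)$, and reindexing. I expect the reverse inclusion $\cA(\theta_t\omega) \subseteq \phi(t, \omega, \cA(\omega))$ to be the main obstacle: given $a \in \cA(\theta_t\omega)$ approximated by $\phi(s_n, \theta_{-s_n + t}\omega, y_n)$ with $s_n \to \infty$ and $y_n \in B(\theta_{-s_n + t}\omega)$, I would rewrite via the cocycle as $\phi(t, \omega, z_n)$ where $z_n := \phi(s_n - t, \theta_{-(s_n - t)}\omega, y_n)$, use $z_n \in B(\omega)$ eventually by absorption, extract $z_n \to b \in B(\omega)$ by compactness, observe $b \in \cA(\omega)$ by the $\omega$-limit construction, and conclude $a = \phi(t, \omega, b)$ by continuity of $\phi(t, \omega, \cdot)$. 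This is the step where compactness rather than mere boundedness of $B(\omega)$ is indispensable.

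Finally, measurability of $\omega \mapsto \cA(\omega)$ as a random closed set follows from the joint measurability of $\phi$ together with a measurable selection for $B(\cdot)$, combined with the stability of the class of measurable multifunctions under closures of countable unions and countable intersections. Uniqueness is then standard: any other attractor $\cA'$ satisfies, by its own invariance, $\cA'(\omega) = \phi(t, \theta_{-t}\omega, \cA'(\theta_{-t}\omega))$, and pullback attractivity of $\cA$ applied to the tempered family $\cA'$ forces $\cA'(\omega) \subseteq \cA(\omega)$; the symmetric argument, using pullback attractivity of $\cA'$ against the tempered family $\cA$, yields the reverse inclusion.
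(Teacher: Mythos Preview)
The paper does not prove this theorem; it is quoted verbatim as a result from the literature, namely \cite[Theorem~3.5]{FlSchm}, and no proof is given in the paper itself. Your sketch is a faithful outline of the classical Crauel--Flandoli--Schmalfu{\ss} argument that the cited reference contains, so there is nothing to compare against beyond noting that your proposal reproduces the standard proof rather than anything original to this paper.
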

      
        \subsection{Pathwise mild solutions}
        We consider the abstract parabolic problem \eqref{eq:MainEq}.
        Similar to~\cite{Pronk2014} and~\cite{kuehn2021random} we make the following assumptions for the non-autonomous random differential operators in order to ensure that $(A(t,\omega))_{t\in \R,\,\omega\in \Omega}$ generates a parabolic evolution family {on a separable, reflexive Banach space of type $2$ denoted by $X$}. 
        \begin{assumptions}\label{ass:OperatorFamily}\hfill
			\begin{itemize}
				\item[i)] The operators $A(t,\omega)$ are closed and densely defined with fixed domains, i.e.  $\mathcal{D}_A:=D(A(t,\omega))$ for every $t\in \R,\omega\in \Omega$. Furthermore, they have bounded imaginary powers, i.e. there exists $C>0$ such that
                \begin{align*}
                    \sup_{\abs{s}\leq 1} \norm{(-A(t,\omega))^{is}}_{\mathcal{L}(X)}\leq C
                \end{align*}
                for every $t,s\in \R, \omega\in \Omega.$
				\item[ii)] The operators are sectorial, i.e.~there exists  $\vartheta\in (\frac{\pi}{2},\pi)$, such that $\Sigma_\vartheta:=\{z\in \C~:~\abs{\textnormal{arg}(z)}<\vartheta\}\cup \{0\}\subset \rho(A(t,\omega))$ for all $t\in \R,\omega\in \Omega$, where $\rho$ denotes the resolvent, and there exists a constant $M>0$ such that
				\begin{align*}
					\norm{(z-A(t,\omega))^{-1}}_{\mathcal{L}(X)}\leq \frac{M}{1+\abs{z}}
				\end{align*}
				for every $t\in \R,\omega\in \Omega$ and $z\in \Sigma_\vartheta$.
				\item[iii)] There exist $\nu\in (0,1]$ and $C>0$ such that
				\begin{align}\label{kt}
					\norm{A(t,\omega)-A(s,\omega)}_{\mathcal{L}(\mathcal{D}_A;X)}\leq C(\omega)\abs{t-s}^\nu \leq C\abs{t-s}^\nu
				\end{align}
				for every $t,s\in \R, \omega\in \Omega$ and some mapping $C:\Omega\to [0,\infty)$ which is uniformly bounded.
                \item [iv)] There exists  $\nu^*>0$ such that the adjoint operators $(A^*(t,\omega))_{t\in \R,\,\omega\in \Omega}$ satisfy~\eqref{kt} with exponent $\nu^*$. 
				\item[v)] The mapping $A:\R\times \Omega\to \mathcal{L}(\mathcal{D}_A;X)$ is strongly measurable, adapted and  for every $t\in \R, \omega\in \Omega$ the operator $A(t,\omega)$ has a compact inverse. 
                \item[vi)] For all $t\in \R$ and $\omega\in \Omega$ the structural assumption $A(t,\omega)=A(\theta_t \omega)$ holds.
			\end{itemize}
		\end{assumptions}
        The Assumptions \textit{ii)} and \textit{iii)}  are sometimes called the Kato-Tanabe conditions, see \cite{Tanabe1979}, which are common assumptions for non-autonomous evolution equations. They ensure that the operators are sectorial and satisfying a certain H\"older continuity in time.~{The uniform boundedness of the random constant $C(\omega)$ in Assumption \textit{iii)} can be removed by a localization argument.~We refer to~\cite[Section 5.3]{Pronk2014} for more details on this procedure.} The last two assumptions are necessary because the generators depend on the random parameter $\omega$. In particular, the structural dependence in \textit{vi)} is essential to show that the problem generates a random dynamical system.\\ 
        
 
		In the following, we use the notation $\Delta:=\{(t,s) \in \R^2~:~ t\geq s\}$. The next theorem provides the existence of a random parabolic evolution family generated by $(A(t,\omega))_{t\in \R,\,\omega\in \Omega}$.
  
\begin{theorem}{\rm (\cite[Theorem 2.2, Proposition 2.4]{Pronk2014})} \label{thm:ExistEvolutionFam}
Under the Assumption \ref{ass:OperatorFamily} there exists a unique parabolic evolution family $U:\Delta\times \Omega \to \mathcal{L}(X)$ which is strongly measurable in the operator topology and satisfies the following properties:
			\begin{itemize}
				\itemsep -4pt
				\item[i)] $U(t,t,\omega)=\textnormal{Id}_X$ for all $t\geq 0, \omega\in \Omega$.
				\item[ii)] For all $0\leq r\leq s\leq t,\omega\in \Omega$ we have
				\begin{align}\label{eq:OpFamProp}
					U(t,s,\omega)U(s,r,\omega)=U(t,r,\omega).
				\end{align}
			\item[iii)] For all $s<t$ 
			\begin{align*}
				\frac{\txtd }{\txtd t} U(t,s,\omega)=A(t,\omega)U(t,s,\omega)
			\end{align*}
			holds pointwise in $\Omega$.
			\item[iv)] $U(\cdot,\cdot,\omega)$ is strongly continuous for every $\omega\in \Omega$, and $U(t,s,\cdot)$ strongly $\mathcal{F}_t$-measurable 
            in the uniform operator topology for every $t\geq s$.
			\end{itemize}
		\end{theorem}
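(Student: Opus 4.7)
The plan is to split the argument into a pathwise construction and a measurability step. For each frozen $\omega \in \Omega$, I would run the classical deterministic Kato--Tanabe argument to produce $U(\cdot,\cdot,\omega)$ satisfying (i)--(iii), and then use Assumption~\ref{ass:OperatorFamily}(v) to propagate strong $\cF$-measurability and $\cF_t$-adaptedness through the construction, which yields (iv).

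For the pathwise step, Assumptions~\ref{ass:OperatorFamily}(i)--(ii) say that each $A(s,\omega)$ is sectorial on the common domain $\cD_A$, so it generates an analytic semigroup $T(\tau,s,\omega)=\exp(\tau A(s,\omega))$ given by the Dunford integral
\[
T(\tau,s,\omega)=\frac{1}{2\pi \mathrm i}\int_{\Gamma}e^{\lambda\tau}(\lambda-A(s,\omega))^{-1}~\txtd\lambda,
\]
with $\Gamma\subset\Sigma_\vartheta$ a suitable contour. Following Tanabe, I would seek $U$ in the perturbed form
\[
U(t,s,\omega)=T(t-s,s,\omega)+\int_s^t T(t-\tau,\tau,\omega)\,R(\tau,s,\omega)~\txtd\tau,
\]
where $R$ solves the Volterra equation $R=R_1+R_1\ast R$ with kernel $R_1(t,s,\omega):=[A(t,\omega)-A(s,\omega)]\,T(t-s,s,\omega)$. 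The Hölder bound \eqref{kt} together with the parabolic estimate $\|A(s,\omega)T(\tau,s,\omega)\|_{\cL(X)}\lesssim \tau^{-1}$ coming from sectoriality yields $\|R_1(t,s,\omega)\|_{\cL(X)}\le C|t-s|^{\nu-1}$ with $C$ uniform in $\omega$ thanks to the uniform bound on $C(\omega)$ in Assumption~\ref{ass:OperatorFamily}(iii). Since $\nu>0$, the Neumann series $R=\sum_{n\ge 1}R_1^{\ast n}$ converges absolutely in operator norm on bounded time windows; substituting back yields $U$, and (i)--(iii) follow by direct verification, with uniqueness via a Gronwall-type argument. Assumption~\ref{ass:OperatorFamily}(iv) on the adjoint enters precisely when one justifies the commutator estimates and the differentiability in the second variable $s$ needed to close the construction.

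For the measurability step, strong $\cF$-measurability of $\omega\mapsto(\lambda-A(s,\omega))^{-1}$ for each fixed $\lambda\in\Sigma_\vartheta$ follows from Assumption~\ref{ass:OperatorFamily}(v) together with the resolvent identity. The fixed contour integral transfers this to $T(\tau,s,\cdot)$, and then each iterated convolution $R_1^{\ast n}$ is a Bochner integral of strongly measurable operator-valued integrands and is itself strongly measurable. The absolute convergence of the Neumann series promotes this to $R$, and the defining formula to $U(t,s,\cdot)$. Adaptedness is automatic: the construction of $U(t,s,\omega)$ only sees $A(\tau,\omega)$ for $\tau\in[s,t]\subset(-\infty,t]$, so $U(t,s,\cdot)$ is $\cF_t$-measurable. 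The strong continuity on $\Delta$ claimed in (iv) reduces to continuity of analytic semigroups combined with Lebesgue dominated convergence applied inside the Volterra integral.

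I expect the principal obstacle to be the measurability bookkeeping for the iterated Volterra kernels: the integrands are unbounded operator-valued with singular time weights $|t-s|^{\nu-1}$, so strong Bochner integrability must be justified jointly in $(s,t,\omega)$, and strong measurability must survive the Neumann limit. One reconciles this by first approximating the Dunford contour by finite Riemann sums (for which measurability is trivial), checking measurability at the discrete and iterated levels, and then invoking the uniform-in-$\omega$ bound from Assumption~\ref{ass:OperatorFamily}(iii) to pass to the limit by dominated convergence.
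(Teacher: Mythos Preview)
The paper does not give its own proof of this theorem: it is stated as a citation of \cite[Theorem~2.2, Proposition~2.4]{Pronk2014} and used as a black box. Your sketch is the standard Kato--Tanabe construction augmented with a measurability layer, which is precisely the approach taken in the cited reference, so there is nothing to compare against here; your outline is consistent with how that result is actually proved.
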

We need additional smoothing properties of the evolution family. To this end for $\beta\geq 0$ we denote  the fractional power spaces by $X_\beta:=D((-A(t,\omega))^\beta)$ endowed with the norm $\norm{x}_{X_\beta}:=\norm{(-A(t,\omega))^\beta x}_X$ and by $X_{-\beta}$ the completion of $X$ with respect to the norm $\norm{x}_{X_{-\beta}}:=\norm{(-A(t,\omega))^{-\beta} x}_X$. Since we assumed that $A(t,\omega)$ is densely defined, all fractional powers $A(t,\omega)^\beta$ are also closed and densely defined, see for example \cite[Theorem 4.6.5]{Amann1995}.

    \begin{remark}\label{rem:EquivNormFracPower}
        We impose in Assumption \ref{ass:OperatorFamily} i) that the domain of $A(\theta_t\omega)$ is independent of $t$ and $\omega$.~However, this, in general, does not imply that the fractional power spaces are independent of $t$ and $\omega$. However, since we further assume that the operators $A(\theta_t \omega)$ have bounded imaginary powers, the fractional power spaces can be identified using complex interpolation, meaning that for any $\alpha\in(0,1)$ we have according to \cite[Theorem V.1.5.4]{Amann1995} that $X_\alpha=[X,\cD_A]_\alpha=D((-A(\theta_t \omega))^\alpha)$.~Therefore, the spaces $X_\alpha$ do not depend on $t$ and $\omega$.
		\end{remark}
		  
\begin{assumptions}\label{ass:ExpStable}
The evolution family is exponentially stable. This means that there exists $\lambda>0, C_U>0$ such that 
\begin{align}\label{eq:ExpStable}
	\norm{U(t,s,\omega)}_{\mathcal{L}(X)}\leq C_U e^{-\lambda (t-s)},
				\end{align}
                for all $t\geq s$ and $\omega\in \Omega$.
		\end{assumptions}
     
Assumption \ref{ass:ExpStable} implies the following decay estimates. 

\begin{lemma}{\rm (\cite[Lemma 2.6, 2.7]{Pronk2014})}\label{lem:regularity_process}
			Let the Assumptions \ref{ass:OperatorFamily} and \ref{ass:ExpStable} hold. Then for every $t>0$ and $\omega\in \Omega$ the mapping $U(t,\cdot,\omega)\in C^1([0,t);\mathcal{L}(X))$ fulfills for all $x\in \mathcal{D}_A$
			\begin{align*}
				\frac{\txtd }{\txtd s} U(t,s,\omega)=-U(t,s,\omega)A(s,\omega)x.
			\end{align*}
			Moreover, for $\alpha\in [0,1]$ and $\beta\in (0,1)$ the following estimates hold for $t>s$ and  $\omega\in \Omega$
			\begin{align}
				\norm{(-A(t,\omega))^\alpha U(t,s,\omega)x}_X&\leq \tilde{C}_\alpha \frac{e^{-\lambda(t-s)}}{(t-s)^\alpha}\norm{x}_X, &x\in X,\label{eq:EvFamIneq}\\
				\norm{ U(t,s,\omega)(-A(s,\omega))^\alpha x}_X&\leq \tilde{C}_\alpha \frac{e^{-\lambda(t-s)}}{(t-s)^\alpha}\norm{x}_X, &x\in X_\alpha,\label{eq:EvFamIneq2}\\
				\norm{(-A(t,\omega))^{-\alpha} U(t,s,\omega)(-A(s,\omega))^\beta x}_X&\leq \tilde{C}_{\alpha,\beta} \frac{e^{-\lambda(t-s)}}{(t-s)^{\beta-\alpha}}\norm{x}_X, &x\in X_\beta.\label{eq:EvFamIneq3}
			\end{align}
		\end{lemma}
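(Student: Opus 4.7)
The plan is to deduce both parts of the lemma from the structural properties of $U$ already established in Theorem~\ref{thm:ExistEvolutionFam}, namely the forward differentiation formula in iii), the evolution property~\eqref{eq:OpFamProp}, strong continuity of $U$, together with the Kato--Tanabe hypotheses and bounded imaginary powers from Assumption~\ref{ass:OperatorFamily} and the uniform exponential bound from Assumption~\ref{ass:ExpStable}.

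For the backward differentiation formula, fix $x \in \mathcal{D}_A$ and $s < t$, and use~\eqref{eq:OpFamProp} to write, for small $h > 0$,
\[
U(t, s+h, \omega) - U(t, s, \omega) = -U(t, s+h, \omega)\bigl[U(s+h, s, \omega) - \textnormal{Id}\bigr].
\]
Since $x \in \mathcal{D}_A$ makes $U(\cdot, s, \omega) x$ a classical solution of the forward equation, the bracket applied to $x$ equals $\int_s^{s+h} A(\tau, \omega) U(\tau, s, \omega) x\, \txtd\tau$, which divided by $h$ converges to $A(s, \omega) x$ by the H\"older continuity of $A(\cdot, \omega)$ from Assumption~\ref{ass:OperatorFamily}~iii) and strong continuity of $U$. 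Combining this with strong continuity of $U(t, \cdot, \omega)$ yields $\frac{\txtd}{\txtd s} U(t, s, \omega) x = -U(t, s, \omega) A(s, \omega) x$. The matching left-sided limit and the $C^1$ regularity on $[0, t)$ follow by the same argument.

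For the smoothing estimate~\eqref{eq:EvFamIneq}, the essential input is the short-time bound $\norm{A(t, \omega) U(t, s, \omega)}_{\mathcal{L}(X)} \le C (t - s)^{-1}$ for $0 < t - s \le 1$, which is the classical parabolic regularization built into the Tanabe construction of $U$ as a perturbation of the frozen-coefficient analytic semigroups $e^{\tau A(s, \omega)}$ using the Kato--Tanabe conditions. Combined with the standard analytic-semigroup bound $\norm{(-A(s, \omega))^\alpha e^{\tau A(s, \omega)}}_{\mathcal{L}(X)} \le c_\alpha \tau^{-\alpha}$ and the identification of fractional power spaces provided by the bounded imaginary powers (Remark~\ref{rem:EquivNormFracPower}), complex interpolation yields $\norm{(-A(t, \omega))^\alpha U(t, s, \omega)}_{\mathcal{L}(X)} \le C_\alpha (t - s)^{-\alpha}$ for $\alpha \in [0, 1]$ on this short window. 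The exponential factor for arbitrary $t > s$ is then inserted by splitting
\[
U(t, s, \omega) = U\bigl(t, t - \tfrac12 \min\{1, t - s\}, \omega\bigr) \cdot U\bigl(t - \tfrac12 \min\{1, t - s\}, s, \omega\bigr),
\]
applying the short-time bound to the left factor and the exponential stability of Assumption~\ref{ass:ExpStable} to the right factor.

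Estimate~\eqref{eq:EvFamIneq2} is the dual version of~\eqref{eq:EvFamIneq}: one applies~\eqref{eq:EvFamIneq} to the adjoint evolution family $U(t, s, \omega)^*$, whose generator $A^*(\cdot, \omega)$ satisfies the Kato--Tanabe conditions by Assumption~\ref{ass:OperatorFamily}~iv) and inherits the exponential stability, and dualizes. Finally,~\eqref{eq:EvFamIneq3} follows by splitting $U(t, s, \omega) = U(t, \tfrac{t+s}{2}, \omega) U(\tfrac{t+s}{2}, s, \omega)$: the right factor is handled by~\eqref{eq:EvFamIneq2} with exponent $\beta$, while the composition $(-A(t, \omega))^{-\alpha} U(t, \tfrac{t+s}{2}, \omega)$ is controlled via interpolation between the trivial case $\alpha = 0$ (direct exponential bound) and the case $\alpha = 1$, where $(-A(t, \omega))^{-1} U(t, \cdot, \omega)$ gains a full unit of regularity; combining with the fractional-power identification of Remark~\ref{rem:EquivNormFracPower} produces the advertised exponent $\beta - \alpha$. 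The main technical obstacle is the short-time parabolic bound $\norm{A(t, \omega) U(t, s, \omega)}_{\mathcal{L}(X)} \le C (t - s)^{-1}$; this is the one step that genuinely invokes the Tanabe machinery, and once it is available the three estimates reduce to interpolation, duality, and time-splitting bookkeeping.
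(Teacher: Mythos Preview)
The paper does not supply its own proof of this lemma; it is quoted directly from \cite[Lemmas~2.6 and~2.7]{Pronk2014} and no argument is given. So there is nothing to compare your sketch against inside the paper itself.

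That said, your sketch is essentially the standard Tanabe argument and is fine for the backward differentiation formula and for \eqref{eq:EvFamIneq}. The duality route to \eqref{eq:EvFamIneq2} using Assumption~\ref{ass:OperatorFamily}~iv) is also correct in spirit.

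There is, however, a genuine gap in your treatment of \eqref{eq:EvFamIneq3}. After the midpoint split you claim that $(-A(t,\omega))^{-\alpha}U(t,\tfrac{t+s}{2},\omega)$ can be controlled by interpolating between $\alpha=0$ and $\alpha=1$, the latter ``gaining a full unit of regularity''. This is not true in the sense you need: for $z\in X$ one has $\|(-A(t,\omega))^{-1}U(t,r,\omega)z\|_X=\|U(t,r,\omega)z\|_{X_{-1}}\to\|z\|_{X_{-1}}$ as $t\downarrow r$, which does not vanish. The operator $(-A(t,\omega))^{-\alpha}$ is merely bounded on $X$; it produces no factor $(t-r)^{\alpha}$ in the $\mathcal{L}(X)$-norm. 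Consequently your midpoint argument only delivers the exponent $-\beta$, not $-(\beta-\alpha)$. A correct way to obtain \eqref{eq:EvFamIneq3} is to read it as the mapping bound $U(t,s,\omega)\in\mathcal{L}(X_{-\beta},X_{-\alpha})$ with norm $\tilde C_{\alpha,\beta}(t-s)^{-(\beta-\alpha)}e^{-\lambda(t-s)}$ and to derive this on the extrapolation scale by duality: using Assumption~\ref{ass:OperatorFamily}~iv) one applies the analogue of \eqref{eq:EvFamIneq} to the adjoint evolution family and then dualises, which is exactly the mechanism Pronk--Veraar use.
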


		\begin{remark}\label{remark}\hfill
            \begin{itemize}
                \item 
    			Note that for $\alpha\in [0,1]$, the space $X_\alpha$ is densely embedded into $X$. Therefore,  \eqref{eq:EvFamIneq2} shows that there exists a unique extension of $U(t,s,\omega)(-A(s,\omega))^\alpha$ as a bounded linear operator on $X$.~We will denote the extension again by $U(t,s,\omega)(-A(s,\omega))^\alpha$. In particular, we have for $x\in X$
    			\begin{align}
    				\norm{U(t,s,\omega)(-A(s,\omega))^{\alpha}x}_X\leq \tilde{C}_\alpha \frac{e^{-\lambda(t-s)}}{(t-s)^\alpha}\norm{x}_X.\label{eq:EvFamIneq4}
     			\end{align}

                \item
                Similarly, we can extend $U(t,s,\omega)$ to an operator in $X_{-\alpha}$. Indeed,  $A(t,\omega)^\alpha$ is an isomorphism between $X_\alpha$ and $X$. Hence, for $x\in X$ there exists an $y\in X_\alpha$ such that $x=A(t,\omega)^\alpha y$, see \cite[Theorem V.1.5.4]{Amann1995}. This implies that
            \begin{align*}
                \|U(t,s,\omega)x\|_{X} &= \|U(t,s,\omega)(-A(t,\omega))^{\alpha}y\|_{X} \le \tilde{C}_\alpha\frac{e^{-\lambda(t-s)}}{(t-s)^{\alpha}}\|y\|_{X} \\
                &= \tilde{C}_\alpha\frac{e^{-\lambda(t-s)}}{(t-s)^{\alpha}}\|(-A(t,\omega))^{-\alpha}x\|_{X} = \tilde{C}_\alpha\frac{e^{-\lambda(t-s)}}{(t-s)^{\alpha}}\|x\|_{X_{-\alpha}},
            \end{align*}
            and since $X$ is densely embedded into $X_{-\alpha}$, we can extend $U(t,s,\omega)$ to an operator from $X_{-\alpha}$ to $X$.
            \end{itemize}

		\end{remark}
  
   Keeping this in mind, the mild formulation corresponding to the linear part of \eqref{eq:MainEq} would be 
        \begin{align*}
    u(t)=U(t,0,\omega)u_0(\omega)+ \sigma \int_0^t U(t,s,\omega)~\txtd W_s\quad {\text{for every $t>0$, } \overline{\P}-\text{a.s.}}
        \end{align*}
        However, the stochastic integral is not well-defined since $U(t,s,\cdot)$ is not $\cF_s$-measurable. The idea to overcome this problem is to formally apply integration by parts~\cite{Pronk2014}. This leads to the integral representation 
        \begin{align}\label{eq:pw_mildLin}
            u(t)&= U(t,0,\omega)u_0 + \sigma U(t,0,\omega)W_t  - \sigma \int_0^t U(t,s,\omega) A(s,\omega)(W_t-W_s)~\txtd s.
            \end{align}
        A process satisfiying \eqref{eq:pw_mildLin} is called a pathwise mild solution.~Note that the stochastic integral in \eqref{eq:pw_mildLin} is well-defined due to the H\"older-regularity of the Brownian motion $(W_t)_{t\geq 0}$  which compensates for the singularity arising from the estimate $\|U(t,s,\omega)A(s,\omega)\|_{\mathcal{L}(X)}\leq (t-s)^{-1}$.~To shorten notations we set 
        \begin{align}\label{eq:def_h}
            h(t):=\sigma U(t,0,\omega)W_t -\sigma \int_0^t U(t,s,\omega)A(s,\omega)(W_t-W_s)~\txtd s.
        \end{align}
{For the sake of completeness, in the following theorem we recall that the linear part of~\eqref{eq:MainEq} 
generates a random dynamical system as shown in~\cite{kuehn2021random}. To this end, we work with the metric dynamical system $(\Omega,\cF,\P,(\theta_t)_{t\in\R})$ constructed in Remark \ref{mds:final} and use the identification $W_t(\omega)=\omega_t$ for $\omega\in\Omega$. Furthermore, the structural assumption on the generators, $A(t,\omega)=A(\theta_t \omega),$ allows us to prove that the corresponding evolution family generates a random dynamical system.}
        \begin{theorem}{\rm (\cite[Theorem 3.1]{kuehn2021random})}\label{lin:rds}
        Let the Assumption~\ref{ass:OperatorFamily} hold and
            $U:\Delta\times \Omega\to \mathcal{L}(X)$ be the evolution family generated by $(A(\theta_t \omega))_{t\in \R,\,\omega\in \Omega}$. Then 
            \begin{align*}
                \widetilde{U}:\R^+\times \Omega\times X \to X,\quad  (t,\omega,x)\mapsto U(t,0,\omega)x,
            \end{align*}
            is a random dynamical system.
        \end{theorem}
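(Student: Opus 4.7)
The plan is to verify the three defining axioms of a random dynamical system for $\widetilde{U}(t,\omega,x):=U(t,0,\omega)x$, namely joint measurability, the identity $\widetilde{U}(0,\omega,\cdot)=\Id_X$, and the perfect cocycle property, together with continuity in $x$. The identity at $t=0$ is immediate from Theorem \ref{thm:ExistEvolutionFam} i), and continuity of $x\mapsto U(t,0,\omega)x$ is built into $U(t,0,\omega)\in\mathcal{L}(X)$. For joint $(\mathcal{B}(\R^+)\otimes\cF\otimes\mathcal{B}(X),\mathcal{B}(X))$-measurability of $(t,\omega,x)\mapsto U(t,0,\omega)x$, I would combine the strong $\mathcal{F}_t$-measurability of $U(t,0,\cdot)$ from Theorem \ref{thm:ExistEvolutionFam} iv) with the strong continuity of $U(\cdot,0,\omega)$; this gives a Carathéodory map into $\mathcal{L}(X)$ (equipped with the strong operator topology), which together with the continuity in $x$ yields joint measurability by standard approximation (e.g. through piecewise-constant-in-$t$ approximants).

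The core of the argument, and the essential use of Assumption \ref{ass:OperatorFamily} vi), is the cocycle identity
\begin{align*}
U(t+s,0,\omega)=U(t,0,\theta_s\omega)\,U(s,0,\omega)\qquad\text{for all } t,s\ge 0,\ \omega\in\Omega.
\end{align*}
By the evolution property \eqref{eq:OpFamProp} we have $U(t+s,0,\omega)=U(t+s,s,\omega)\,U(s,0,\omega)$, so it suffices to show $U(t+s,s,\omega)=U(t,0,\theta_s\omega)$. Fix $\omega\in\Omega$ and $s\ge 0$ and set $V(t):=U(t+s,s,\omega)$ for $t\ge 0$. Theorem \ref{thm:ExistEvolutionFam} i) and iii) give $V(0)=\Id_X$ and
\begin{align*}
\frac{\txtd}{\txtd t}V(t)=A(t+s,\omega)\,V(t)=A(\theta_{t+s}\omega)\,V(t)=A(\theta_t(\theta_s\omega))\,V(t),
\end{align*}
where the second equality uses Assumption \ref{ass:OperatorFamily} vi) and the third the group property $\theta_{t+s}=\theta_t\circ\theta_s$ of the metric dynamical system. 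Because $\Omega$ is $(\theta_t)_{t\in\R}$-invariant by Remark \ref{mds:final}, the shifted family $(A(\theta_t(\theta_s\omega)))_{t\in\R}$ again satisfies Assumption \ref{ass:OperatorFamily}, so Theorem \ref{thm:ExistEvolutionFam} produces a \emph{unique} parabolic evolution family attached to it. The map $t\mapsto U(t,0,\theta_s\omega)$ solves the same Cauchy problem as $V$ with the same initial value, and uniqueness therefore yields $V(t)=U(t,0,\theta_s\omega)$, proving the cocycle identity.

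The main obstacle is that the cocycle relation is required to hold for \emph{every} $\omega\in\Omega$ (the perfect cocycle), not merely $\P$-almost surely. This is precisely why Assumption \ref{ass:OperatorFamily} vi) is imposed pointwise in $\omega$ and why Remark \ref{mds:final} is needed: the pathwise construction of $U$ in Theorem \ref{thm:ExistEvolutionFam} is done $\omega$ by $\omega$, and $(\theta_t)_{t\in\R}$-invariance of $\Omega$ guarantees that $\theta_s\omega\in\Omega$ so that $U(\cdot,0,\theta_s\omega)$ is well-defined and the uniqueness argument applies pointwise. Assembling the measurability, the identity at $t=0$, the cocycle identity, and continuity in $x$ finishes the verification that $\widetilde{U}$ is a random dynamical system on $X$ over $(\Omega,\cF,\P,(\theta_t)_{t\in\R})$.
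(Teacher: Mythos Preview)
Your proposal is correct and follows essentially the same route as the paper: the key step is the identity $U(t+s,s,\omega)=U(t,0,\theta_s\omega)$, which both you and the paper derive from the structural assumption $A(t,\omega)=A(\theta_t\omega)$ together with the group property $\theta_{t+s}=\theta_t\circ\theta_s$, and then combine with \eqref{eq:OpFamProp} to obtain the cocycle. The paper's own proof is only a sketch (referring to \cite[Theorem~3.1]{kuehn2021random} for details), so your more explicit argument via uniqueness of the evolution family and your discussion of measurability simply fill in what the paper leaves implicit.
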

        \begin{proof}
            We only give a short argument to emphasize why the structural assumption on the random generators is required. We observe that $$U(t+s,s,\omega)=U(t,0,\theta_s \omega)$$ for $t, s\geq 0$ { and $\omega\in\Omega$}. Intuitively, this means that starting at time $s$ on the $\omega$-fiber, and letting the system evolve for time $t$, is the same as starting at time $0$ on the shifted $\theta_s \omega$-fiber and letting  $t$ time units pass. On the level of generators, the evolution operator $U(t+s,s,\omega)$ is obtained from $A(\theta_t \omega)$ and the evolution operator $U(t,0,\theta_s \omega)$ from $A(\theta_{t-s}\circ \theta_s \omega)$. Due to the group property of the metric dynamical system $\theta$, these generators are the same. Together with \eqref{eq:OpFamProp}, this results in
            \begin{align*}
                \widetilde{U}(t+s,\omega)=\widetilde{U}(t,\theta_s \omega)\widetilde{U}(s,\omega){\text{ for every } t,s\geq 0 \text{ and } \omega\in\Omega.}
            \end{align*}
            For more details we refer to \cite[Theorem 3.1]{kuehn2021random}.
        \end{proof}

\section{Abstract setting {and an Ornstein-Uhlenbeck-type process}}
   
	\subsection{Local existence of solutions}\label{sec:loc:sol}


		The local-in-time existence of a pathwise mild solution for~\eqref{eq:MainEq} can be shown following the proof of~\cite[Theorem 7.8]{Hairer} which relies on Banach's fixed point theorem and subtracting the modified stochastic convolution $h$ in \eqref{eq:def_h}.  This argument is simpler than the setting in~\cite[Theorem 5.3]{Pronk2014} due to the  additive structure of the noise. In \cite{Pronk2014} more general noise terms are considered, but the nonlinearity is globally Lipschitz which we do not assume here.~Since we are interested in the local well-posedness of~\eqref{eq:MainEq}, we work in this subsection with the filtered probability space  $(\overline{\Omega}, \overline{\cF},\overline{\P},(\cF_t)_{t\geq 0} )$.

  \begin{definition}\label{loc:sol}
  Let $\alpha\in[0,1)$.~We call an $X_\alpha$-valued process $u$ together with a stopping time $\tau$ such that $\mathbb{P}(\tau>0)=1$ a local pathwise mild solution for~\eqref{eq:MainEq} if for every $t>0$ the identity 
  \begin{align*}
 u(t)&= U(t,0,\omega)u_0(\omega) + \sigma U(t,0,\omega)W_t + \int_0^t U(t,s,\omega) {(F(u(s,\omega)) + f)}~\txtd s \\
            &\quad - \sigma \int_0^t U(t,s,\omega) A(s,\omega)(W_t-W_s)~\txtd s
     \end{align*}     
  holds {$\overline{\P}$-a.s.}~on the event $\{t<\tau\}$.
Moreover, if $\overline{\tau}$ is a stopping time such that $\overline{\tau}\leq \tau$ a.s. and the corresponding solution satisfies $\overline{u}(t)=u(t)$ a.s. on $\{ t<\overline{\tau}\}$, then the pair $(\overline{u},\overline{\tau})$ is also a local solution and $(u,\tau)$ extends $(\overline{u},\overline{\tau})$.
Finally, we call $(\overline{u},\overline{\tau})$ a maximal mild solution if on the set $\{{\overline{\tau}}<\infty\}$ one has $\lim\limits_{t\to\overline{\tau}} \|u_t\|_{X_\alpha}=\infty$ a.s. and there exists a sequence $(u_n,\tau_n)$ of local mild solutions with increasing stopping times $\tau_n$ such that $\overline{\tau}=\sup\limits_n \tau_n$ a.s. and $(\overline{u},\overline{\tau})$ extends each of the local mild solutions $(u_n,\tau_n)$. {The pathwise mild solution is global provided that $\tau=\infty$ almost surely. }

  \end{definition}

  \begin{theorem}\label{local:existence}
Let $\alpha\in[0,1/2)$, {$f\in X_\alpha$} and let  Assumption~\ref{ass:OperatorFamily} hold. Assume that $u_0(\omega)\in X_\alpha$ is {$\cF_0$-measurable} and that the modified stochastic convolution $h$ in \eqref{eq:def_h} has {$\overline{\P}$-a.s.~continuous} sample paths in $X_\alpha$. Furthermore, let $\beta\geq 0$ be such that $\alpha+\beta<1$ and assume that the map 
\begin{equation}\label{assumption}
    F: X_\alpha \to X_{-\beta},
\end{equation}
is Lipschitz continuous on bounded subsets of $X_\alpha$ and grows at most polynomially, i.e. there exist constants  $R>0,L=L_R>0$, $C_F>0$ and $n_\alpha\geq 1$, such that
\begin{align*}
  &  \|F(u)-F(v)\|_{X_{-\beta}} \leq L \|u - v\|_{X_\alpha},~~\|u\|_{X_\alpha}, \|v\|_{X_\alpha}\leq R;\\
  & \|F(u)\|_{X_{-\beta}} \leq C_F(1+\|u\|^{n_\alpha}_{X_\alpha}), ~~~\quad\quad u\in X_\alpha. 
\end{align*}
Then~\eqref{eq:MainEq}  has a unique maximal pathwise mild solution $u\in C([0,\tau);X_\alpha)$.~Moreover, this solution exists globally provided that $\|u(t)\|_{X_\alpha}\leq C$ for all $t\geq 0$. 
\end{theorem}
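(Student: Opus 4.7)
The plan is to reduce the stochastic equation to a pathwise deterministic-looking fixed point problem by subtracting the modified stochastic convolution, and then apply Banach's contraction principle on a small random time interval. Set $v(t,\omega) := u(t,\omega) - h(t,\omega)$; using the pathwise mild formulation, $v$ should satisfy
\begin{align*}
v(t) = U(t,0,\omega)u_0(\omega) + \int_0^t U(t,s,\omega)\bigl(F(v(s)+h(s)) + f\bigr)\,\txtd s.
\end{align*}
Since $h(\cdot,\omega)\in C([0,T];X_\alpha)$ almost surely by assumption, $u_0(\omega)\in X_\alpha$ is $\cF_0$-measurable, and $f\in X_\alpha\hookrightarrow X_{-\beta}$, for each fixed $\omega$ this is a purely deterministic integral equation with continuous forcing. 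I would therefore construct the solution pathwise and then obtain $\cF_t$-adaptedness and measurability of the stopping time from continuity of the fixed point in the data.

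The key analytic ingredient is the smoothing estimate from $X_{-\beta}$ into $X_\alpha$. Writing $U(t,s,\omega)=U(t,\tfrac{t+s}{2},\omega)\,U(\tfrac{t+s}{2},s,\omega)$ and applying \eqref{eq:EvFamIneq} with exponent $\alpha$ to the first factor together with \eqref{eq:EvFamIneq4} (the extension of \eqref{eq:EvFamIneq2}) with exponent $\beta$ to the second factor, I obtain the combined bound
\begin{align*}
\norm{U(t,s,\omega)}_{\mathcal{L}(X_{-\beta},X_\alpha)} \leq \widetilde{C}_{\alpha,\beta}\,\frac{e^{-\lambda(t-s)}}{(t-s)^{\alpha+\beta}}.
\end{align*}
The singular kernel $(t-s)^{-(\alpha+\beta)}$ is integrable precisely because $\alpha+\beta<1$, which is the crucial structural hypothesis.

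With this in hand, I would define on $C([0,T];X_\alpha)$ the solution map
\begin{align*}
\Phi(v)(t) := U(t,0,\omega)u_0(\omega) + \int_0^t U(t,s,\omega)\bigl(F(v(s)+h(s))+f\bigr)\,\txtd s,
\end{align*}
and restrict it to the closed ball $B_R$ of radius $R$ in $C([0,T];X_\alpha)$. Using the polynomial growth of $F$ and the kernel bound, a direct computation yields
\begin{align*}
\norm{\Phi(v)(t)}_{X_\alpha} \leq C_U\norm{u_0}_{X_\alpha} + \widetilde{C}_{\alpha,\beta}\,C_F\int_0^T (t-s)^{-(\alpha+\beta)}\bigl(1+\norm{v(s)+h(s)}_{X_\alpha}^{n_\alpha}\bigr)\txtd s + C\norm{f}_{X_{-\beta}}\,T^{1-\alpha-\beta},
\end{align*}
so that choosing first $R$ sufficiently large depending on $\norm{u_0}_{X_\alpha}$ and $\sup_{[0,T]}\norm{h}_{X_\alpha}$, and then $T=T(R,\omega)>0$ sufficiently small, forces $\Phi(B_R)\subset B_R$. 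The local Lipschitz assumption, combined with the same kernel estimate, gives
\begin{align*}
\norm{\Phi(v)(t)-\Phi(w)(t)}_{X_\alpha} \leq \widetilde{C}_{\alpha,\beta}\,L_R\,\frac{T^{1-\alpha-\beta}}{1-\alpha-\beta}\,\norm{v-w}_{C([0,T];X_\alpha)},
\end{align*}
so after possibly shrinking $T$ further, $\Phi$ becomes a contraction and Banach's theorem yields a unique $v\in C([0,T];X_\alpha)$, hence a unique local pathwise mild solution $u=v+h$.

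The maximal solution is then produced in the standard fashion: restarting the fixed point argument from the endpoint and concatenating via the evolution property \eqref{eq:OpFamProp}, one defines $\overline{\tau}(\omega) := \sup_n \tau_n(\omega)$ for an increasing sequence of local solutions. If $\overline{\tau}(\omega)<\infty$ and $\limsup_{t\uparrow\overline{\tau}}\norm{u(t)}_{X_\alpha}<\infty$, the uniform continuity of $h$ and compactness would permit a further extension, contradicting maximality; hence $\lim_{t\uparrow\overline{\tau}}\norm{u(t)}_{X_\alpha}=\infty$ on $\{\overline{\tau}<\infty\}$. Consequently, an a priori bound $\norm{u(t)}_{X_\alpha}\leq C$ for all $t\geq 0$ excludes blow-up and forces $\overline{\tau}=\infty$ a.s., giving global existence. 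The main technical obstacle is the book-keeping for the combined smoothing estimate at the endpoints $\alpha+\beta$ close to $1$: all terms in the contraction estimate must be controlled by a single power $T^{1-\alpha-\beta}$, and the polynomial nonlinearity in $F$ forces $R$ and $T$ to be chosen in the correct order (first $R$, then $T$), with $T$ depending on $\omega$ through $\sup_{[0,T]}\norm{h(\cdot,\omega)}_{X_\alpha}$.
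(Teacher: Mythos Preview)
Your argument is correct and follows essentially the same Banach fixed-point scheme as the paper: both rely on the smoothing bound $\|U(t,s,\omega)\|_{\mathcal L(X_{-\beta},X_\alpha)}\le \widetilde C_{\alpha,\beta}(t-s)^{-(\alpha+\beta)}$ and the integrability afforded by $\alpha+\beta<1$, and the difference between subtracting $h$ versus centering the ball at $g(t)=U(t,0,\omega)u_0+h(t)$ is only a translation. The one point the paper treats more carefully is the stopping time: following Hairer it sets $\tau(\omega)=\inf\{t\in[0,T]:t^{1-(\alpha+\beta)}d(t,\omega)\ge 1/2\}$ as the hitting time of an explicit continuous $(\cF_t)$-adapted process, so that $\tau$ is manifestly a stopping time as required by Definition~\ref{loc:sol}, whereas your ``choose $R$ large, then $T=T(R,\omega)$ small'' needs an extra line to justify that $T(\omega)$ is adapted.
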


\begin{proof}
 We fix a sufficiently small terminal time $T>0$ and apply  Banach's fixed point theorem to the mapping $\Phi:C([0,T];X_\alpha)\to C([0,T];X_\alpha)$ given by
		\[ (\Phi(u))(t) :=\int_0^t U(t,s,\omega){(F(u(s)) + f)}~\txtd s + g(t),~ t\leq T.\]
Here,
	\begin{align*}
  g{(t,\omega)}&= U(t,0,\omega)u_0(\omega) + \sigma U(t,0,\omega)W_t -\sigma\int_0^t U(t,r,\omega)A(r,\omega)(W_t-W_r)~\txtd r\\ & = U(t,0,\omega)u_0(\omega) + h{(t,\omega)}
  \end{align*}
  has~$\overline{\P}$-a.s.~continuous paths in $X_\alpha$ due to our assumptions.~Therefore, $g(\cdot,\omega)\in C([0,T];X_\alpha)$ for all $\omega\in{\overline{\Omega}}$ up to a nullset $N_0$. Let $c:{\overline{\Omega}}\times [0,T]\to \R_+$ be such that
\begin{align*}
    c(t,\omega):=\begin{cases}
        & \sup\limits_{s\in[0,t]}\|g(s,\omega)\|_{X_\alpha}, ~~\omega\in {\overline{\Omega}}\setminus N_0\\
        & 0, \hspace*{30 mm}\omega\in N_0.
    \end{cases}
\end{align*}
We observe that $c(\cdot, \omega)$ is continuous and increasing in $t$.~Furthermore, we define $d:{\overline{\Omega}}\times [0,T]\to\R_+ $ by
\[  
d(t,\omega):=
\max\left\{\tilde{C}_{\alpha,\beta} {L_{c(t,\omega)+1}}, \tilde{C}_{\alpha,\beta}{ C_F {(1+(1+c(t,\omega))^{n_\alpha}) +\|f\|_{X_\alpha} )}}\right\}, 
\]
 where $\|U(t,s,\omega)\|_{\cL(X_{-\beta};X_\alpha)}\leq \tilde{C}_{\alpha,\beta} (t-s)^{-(\alpha+\beta)}$ and the constant $\tilde{C}_{\alpha,\beta}$ is uniformly bounded in $\omega$ by~\eqref{eq:EvFamIneq3} and Assumption~\ref{ass:OperatorFamily} \textit{iii)}.~By definition, $d(t,\cdot)$ is $\cF_t$-measurable for every $t\in[0,T]$. 
 Therefore, {$\tau:{\overline{\Omega}}\to[0,T]$} defined as
\[ \tau(\omega)=\inf\left\{ t\in[0,T] : t^{1-(\alpha+\beta)} d(t,\omega)\geq 1/2 \right\} ,\]
is a stopping time, where we set $\inf \emptyset:=T$.\\

Let $\omega\in{\overline{\Omega}}\setminus N_0$ and $t\leq \tau(\omega)$ which implies that $t^{1-(\alpha+\beta)} d(t,\omega)<1/2 $. We further consider the set 
\[ B:=\left\{ u\in C([0,t];X_\alpha) : \sup\limits_{s\in [0,t]} \|u(s)-g(s)\|_{X_\alpha}\leq 1 \right\}  \]
  and show that $\Phi:C([0,t];X_\alpha)\to C([0,t];X_\alpha)$ leaves  $B$ invariant and is a contraction. To verify the invariance, let $u\in B$. Then $\sup\limits_{s\in[0,t]} \|u(s)\|_{X_\alpha}\leq 1 +c(t,\omega)$ and 
  \begin{align*}
   &\quad \sup\limits_{s\in[0,t]}   \| (\Phi(u)(s)) - g(s,\omega)\|_{X_\alpha} \leq  \sup\limits_{s\in[0,t]} \int_0^ s \|U(s,r,\omega)\|_{\cL(X_{-\beta};X_\alpha)} \|(F(u(r))+f)\|_{X_{-\beta}}~\txtd r
      \\
      & \leq C_F \tilde{C}_{\alpha,\beta} \sup\limits_{s\in[0,t]} \int_0^s (s-r)^{-(\alpha+\beta)} e^{-\lambda(s-r)} \left(1 + \|u(r)\|^{n_\alpha}_{X_\alpha} +\|f\|_{X_\alpha}\right)~\txtd r  \\
      & \leq C_F \tilde{C}_{\alpha,\beta}  \sup\limits_{s\in[0,t]}  \int_0^s (s-r)^{-(\alpha+\beta)} {\left(1+\|u(r)\|_{X_\alpha}^{n_\alpha} {+\|f\|_{X_\alpha}}\right)}~\txtd r\\
      & \leq C_F \tilde{C}_{\alpha,\beta}  t^{1-(\alpha+\beta)}{\left(1+(1+c(t,\omega))^{n_\alpha}+\|f\|_{X_\alpha}\right)} <1/2, 
  \end{align*}
which shows the {invariance of $B$}. 
  
	To show that $\Phi$ is a contraction we use the local Lipschitz continuity of $F$ and obtain  for $u,v\in B$ 
      \begin{align*}
          \| (\Phi(u))(t) - (\Phi(v))(t)\|_{X_\alpha} &\leq 
          \int_0^t \|  U(t,s,\omega)(F(u(s))-F(v(s))\|_{X_\alpha}~\txtd s\\
          & \leq \int_0^t \|U(t,s,\omega)\|_{\cL(X_{-\beta};X_{\alpha})} \|F(u(s))- F(v(s))\|_{X_{-\beta}}~\txtd s\\
          &\leq 
          \tilde{C}_{\alpha,\beta} \int_0^t  (t-s)^{-(\alpha+\beta)} 
          \| F(u(s))-F(v(s))\|_{X_{-\beta}}~\txtd s\\
          &\leq 
          \tilde{C}_{\alpha,\beta} {L_{1+c(t,\omega)}}\int_0^t  (t-s)^{-(\alpha+\beta)} 
          \| u(s)-v(s)\|_{X_{\alpha}}~\txtd s.
      \end{align*}
      
This implies that 
\begin{align*} \sup \limits_{s\in[0,t]} \| (\Phi(u))(s) - (\Phi(v))(s)\|_{X_{\alpha}} &\leq   t^{1-(\alpha+\beta)} d(t,\omega) \sup\limits_{s\in[0,t]} \|u(s)-v(s)\|_{X_\alpha}\\&<\frac{1}{2}\sup\limits_{s\in[0,t]} \|u(s)-v(s)\|_{X_\alpha}, 
\end{align*}
  proving the contraction property.~Therefore, we obtain via Banach's fixed-point theorem a unique pathwise mild solution $(u,\tau)$ which belongs to $X_\alpha$.~Iterating this argument entails the existence of a maximal pathwise mild solution $(u,\tau)$. 
\end{proof}

\begin{remark}
     We apply the abstract setting of Theorem~\ref{local:existence} to reaction-diffusion equations in Section~\ref{main}  with $X=L^{2}(\mathcal{O})$, $\alpha=1/2-\varepsilon$ for $\varepsilon\in(0,1/2)$ and $\beta=s/2$ for $s\in[0,1)$. We will see that the assumptions of Theorem~\ref{local:existence} hold if the {nonlinearity $F$ satisfies} suitable growth assumptions, see Lemma~\ref{lemma:F}.
    
     The global existence, i.e. $\tau=\infty$ a.s.,~will be shown {under additional dissipativity assumptions on the nonlinearity $F$}
        at the level of the random PDE  similar to~\cite[Section 3]{Lu} and~\cite{Caraballo}. Subsequently, we prove that the reaction diffusion equation generates {a random dynamical system} and investigate its asymptotic behavior in Section~\ref{main}.
\end{remark} 

\subsection{Temperedness of an Ornstein-Uhlenbeck-type process}\label{ou}
{From now on, since }{we follow a random dynamical system approach we 
consider} the $(\theta_t)_{t\in \R}$-invariant set 
$\Omega\subset C_0(\R;X)$ obtained in Lemma~\ref{lem:GrowthPropBM} and the metric dynamical system $(\Omega,\cF,\mathbb{P}, (\theta_t)_{t\in\R})$ constructed in Remark~\ref{mds:final}. 


In this subsection we investigate properties of the stationary Ornstein-Uhlenbeck process, i.e. the solution of the following linear problem with random non-autonomous generators 
	    \begin{align}\label{eq:Langevin}
	    	\begin{cases}
		    	\txtd Z(t,\omega)=A(\theta_t \omega)Z(t,\omega)~\txtd t + {\txtd \omega_t}&\\
		    	Z(\omega):=Z(0,\omega)=\int_{-\infty}^0U(0,r,\omega)A(\theta_r \omega)\omega_r~\txtd r,&
	    	\end{cases}
	    \end{align}
		where $U$ is the evolution family corresponding to the operators $(A(\theta_t \omega))_{t\in \R,\,\omega\in \Omega}$. For the initial condition for $Z$ 
        we would expect that $Z(\omega)=\int_{-\infty}^0 U(0,r,\omega)~\txtd \omega_r$, similar to the autonomous case. However, as above, the integral is not well-defined due to the $\omega-$dependence of $U$. 
        Hence, similarly as in the definition of pathwise mild solutions, we formally apply integration by parts and get
		\begin{align}\label{eq:InitValu}
			\int_{-\infty}^0 U(0,r,\omega)~\txtd \omega_r=\lim\limits_{a\to \infty}\Big(-U(0,a,\omega)\omega_{-a}+\int_{-a}^0 U(0,r,\omega)A(\theta_r \omega)\omega_r~\txtd r \Big),
		\end{align}
		where the limit is taken in $X$. The first term converges to $0$, due to the exponential decay of the evolution family and the subexponential growth of $\omega$, i.e.
		\begin{align*}
			\norm{U(0,-a,\omega)\omega_{-a}}_X&\leq C_U e^{-\lambda a}\norm{\omega_{-a}}_X\leq C_U c_\varepsilon e^{-a(\lambda-\varepsilon)}\to 0
		\end{align*}
		as $a\to \infty$ for $\varepsilon$ small enough, see Remark \ref{rem:decaynoise}. The second term in \eqref{eq:InitValu} is  well-defined due to the sublinear growth \eqref{eq:GrowthBM} of $\omega$ which implies for $-r\geq T_0(\varepsilon,\omega)$ that
		\begin{align*}
			\norm{U(0,r,\omega)A(\theta_r \omega)\omega_{r}}_X &\leq \tilde{C}_1\frac{e^{\lambda r}}{-r}\norm{\omega_r}_X\leq \varepsilon e^{\lambda r} \tilde{C}_1.
		\end{align*}
		Hence, the right-hand side of \eqref{eq:InitValu} is well-defined and converges to the $Z(0,\omega)$ given in \eqref{eq:Langevin}. 
		
		The stationary Ornstein-Uhlenbeck process is given by 
	    \begin{align*}
	    	Z(t,\omega)&=\omega_t+\int_{-\infty}^t U(t,r,\omega)A(\theta_r \omega)\omega_r~\txtd r=\omega_t+\int_{-\infty}^0 	U(t,r+t,\omega)A(\theta_{r+t} \omega)\omega_{r+t}~\txtd r\\
	    	&= \omega_t + \int_{-\infty}^0 U(0,r,\theta_t \omega)A(\theta_{r}\circ \theta_t \omega)(\theta_t \omega_r+\omega_t)~\txtd r\\
            &=\int_{-\infty}^0 U(0,r,\theta_t \omega)A(\theta_{r}\circ \theta_t \omega)\theta_t \omega_r~\txtd r.
	    \end{align*}
		Therefore, we consider the process $(t,\omega)\mapsto Z(\theta_t \omega):=Z(t,\omega)$, and with a slight abuse of notation, for $t>0$ we have
	    \begin{align}\label{eq:OUprocess}
	    	Z(t,\omega)=U(t,0,\omega)Z(\omega)+U(t,0,\omega)\omega_t-\int_0^t U(t,r,\omega)A(\theta_r \omega)(\omega_t-\omega_r)~\txtd r,
	    \end{align}
	    i.e. $Z$ is the pathwise mild solution of~\eqref{eq:Langevin}.
		\begin{lemma}\label{lem:OUTemp}
			The Ornstein-Uhlenbeck process $Z$ in \eqref{eq:OUprocess} is stationary and tempered in $X$.
		\end{lemma}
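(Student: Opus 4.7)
The plan is to verify the two assertions---stationarity and temperedness---separately. Stationarity is essentially a corollary of the computation carried out immediately before the lemma: substituting $r\mapsto r+t$ and invoking the identity $U(r+t,t,\omega)=U(0,r,\theta_t\omega)$ (coming from the structural assumption $A(t,\omega)=A(\theta_t\omega)$ combined with Theorem~\ref{lin:rds}) together with the group law $\theta_r\circ\theta_t=\theta_{r+t}$ already yields $Z(t,\omega)=Z(\theta_t\omega)$, so $(t,\omega)\mapsto Z(t,\omega)$ is a stationary process.

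For temperedness I would use the equivalent formulation from Remark~\ref{rem:EquivTemp} and show, for each $\gamma>0$, that $e^{-\gamma\abs{t}}\norm{Z(\theta_t\omega)}_X\to 0$ as $t\to\pm\infty$. Starting from
\[ Z(\theta_t\omega)=\int_{-\infty}^0 U(0,r,\theta_t\omega)\,A(r,\theta_t\omega)\,(\omega_{r+t}-\omega_t)\,\txtd r, \]
estimate~\eqref{eq:EvFamIneq2} with $\alpha=1$ gives $\norm{U(0,r,\theta_t\omega)A(r,\theta_t\omega)}_{\mathcal L(X)}\leq \tilde C_1\,e^{\lambda r}/(-r)$ for $r<0$, so
\[ \norm{Z(\theta_t\omega)}_X \le \tilde C_1 \int_{-\infty}^0 \frac{e^{\lambda r}}{-r}\,\norm{\omega_{r+t}-\omega_t}_X\,\txtd r. \]
I would then split this integral at $r=-1$: on $[-1,0]$ I would bound the increment of $\omega$ using the H\"older estimate~\eqref{eq:HölderCondBM}, which yields $\norm{\omega_{r+t}-\omega_t}_X \le c_{t-1,t,\gamma_0}(\omega)\abs{r}^{\gamma_0}$ for a fixed $\gamma_0\in(0,1/2)$, integrable against $(-r)^{-1}$; on $(-\infty,-1]$ I would apply~\eqref{eq:SubExpBM} with some $\varepsilon\in(0,\lambda)$ to obtain $\norm{\omega_{r+t}-\omega_t}_X \le c_\varepsilon(\omega)\,e^{\varepsilon\abs{t}}(e^{\varepsilon\abs{r}}+1)$, whose integral against $e^{\lambda r}/(-r)$ is finite (since $\lambda-\varepsilon>0$) and produces a bound of the form $C(\omega)\,e^{\varepsilon\abs{t}}$.

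Combining the two pieces, $\norm{Z(\theta_t\omega)}_X$ is dominated by the sum of a term tempered in $t$ and a term of the form $C(\omega)\,e^{\varepsilon\abs{t}}$; choosing $\varepsilon<\gamma$ then yields $e^{-\gamma\abs{t}}\norm{Z(\theta_t\omega)}_X\to 0$. The main point that requires a brief verification is that the H\"older constant $c_{t-1,t,\gamma_0}(\omega)$ is tempered as a function of $t$: by definition of the Wiener shift it equals $c_{-1,0,\gamma_0}(\theta_t\omega)$, and since $\sup_{s\in[0,1]}c_{-1,0,\gamma_0}(\theta_s\omega)$ is dominated by the integrable random variable $c_{-1,1,\gamma_0}(\omega)$, the sufficient condition~\eqref{eq:TempCond} applies. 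This is the only nontrivial step; everything else is a direct estimate using the smoothing of $U$ and the growth controls on $\omega$ collected in Lemma~\ref{lem:GrowthPropBM} and Remark~\ref{rem:decaynoise}.
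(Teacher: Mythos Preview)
Your argument is correct, but you take a different route for temperedness than the paper. The paper applies the integrability criterion~\eqref{eq:TempCond} directly to $\|Z(\theta_t\omega)\|_X$: it writes $Z(\theta_t\omega)$ via the representation~\eqref{eq:OUprocess} as $U(t,0,\omega)Z(\omega)+U(t,0,\omega)\omega_t-\int_0^tU(t,r,\omega)A(\theta_r\omega)(\omega_t-\omega_r)\,\txtd r$ and then bounds $\E\big[\sup_{t\in[0,1]}\|Z(\theta_t\omega)\|_X\big]$ term by term, invoking Doob's maximal inequality for the $\omega_t$-contribution and the H\"older estimate for the integral term. You instead work pathwise with the single integral representation of $Z(\theta_t\omega)$, split at $r=-1$, and show directly that $\|Z(\theta_t\omega)\|_X$ grows at most like $e^{\varepsilon|t|}$ for any $\varepsilon\in(0,\lambda)$. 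Your approach is more explicit and bypasses Doob's inequality, at the price of having to verify separately (again via~\eqref{eq:TempCond}) that the shifted H\"older constant $c_{-1,0,\gamma_0}(\theta_t\omega)$ is tempered; the paper's approach packages everything into a single expectation bound. Two small points worth tightening: the bound $\|U(0,r,\theta_t\omega)A(r,\theta_t\omega)\|_{\mathcal L(X)}\le \tilde C_1 e^{\lambda r}/(-r)$ really uses the extended operator of Remark~\ref{remark} (equation~\eqref{eq:EvFamIneq4}) rather than~\eqref{eq:EvFamIneq2} itself, since $\omega_{r+t}-\omega_t$ is only in $X$; and the identity $c_{t-1,t,\gamma_0}(\omega)=c_{-1,0,\gamma_0}(\theta_t\omega)$ relies on reading $c_{s,r,\gamma}(\omega)$ as the actual H\"older seminorm $\|\omega\|_{C^\gamma([s,r];X)}$, which is consistent with Lemma~\ref{lem:GrowthPropBM} but should be said.
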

		\begin{proof} For every $t,s\in \R$ and $\omega\in \Omega$ we have
				\begin{align*}
					Z(t+s,\omega)&=\omega_{t+s}+\int_{-\infty}^{t+s} U(t+s,r,\omega)A(\theta_r \omega)\omega_r~\txtd r\\
					&=\theta_s \omega_t+\omega_s+\int_{-\infty}^{t} U(t+s,r+s,\omega)A(\theta_{r+s} \omega)\omega_{r+s}~\txtd r\\
					&=\theta_s \omega_t+\int_{-\infty}^{t} U(t,r,\theta_s \omega)A(\theta_r\circ \theta_s 	\omega)\theta_s \omega_r~\txtd r=Z(t,\theta_s \omega).
				\end{align*}
			For $k\in \N$ and $t,t_1,\ldots,t_k\in \R$ we set $Z_{t_1,\ldots,t_k}:=(Z(t_1,\cdot),\ldots,Z(t_k,\cdot))$. Then for any $A\in \mathcal{B}(X^k)$, the $(\theta_t)_{t\in\R}$-invariance of $\P$ implies that
			\begin{align*}
				\P(Z_{t_1+t,\ldots,t_k+t}\in A)=\P(Z_{t_1,\ldots,t_k}(\theta_t \cdot)\in 	A)=\P(Z_{t_1,\ldots,t_k}\in A),
			\end{align*}
			which shows that $Z$ is stationary.
			
			To verify the temperedness, we use the sufficient condition \eqref{eq:TempCond}.
			The representation of the Ornstein-Uhlenbeck process \eqref{eq:OUprocess}, together with \eqref{eq:ExpStable} leads to
			\begin{align}\label{eq:ProofTempMainIneq}
				\begin{split}
					\E\Big[\sup\limits_{t\in [0,1]} \norm{Z(\theta_t \omega)}_X\Big]&\leq  C_U\E[\norm{Z(\omega)}_X]+C_U\E\Big[\sup\limits_{t\in [0,1]} \norm{\omega_t}_X\Big]\\
					&+\E\Big[\sup\limits_{t\in [0,1]} \norm{\int_{0}^t U(t,r,\omega)A(\theta_r \omega)(\omega_t-\omega_r)~\txtd r}_X\Big].
				\end{split}
			\end{align}
		To estimate the first term we use the same arguments as before. In particular, for $\omega\in \Omega, \varepsilon<\lambda, a>0$ and $\abs{r}\geq T_0(\omega)$,  we have
			\begin{align*}
				\norm{U(0,-a,\omega)\omega_{-a}}_X&\leq C_U e^{-\lambda a}\norm{\omega_{-a}}_X\leq C_U c_\varepsilon e^{-a(\lambda-\varepsilon)},\\
				\norm{U(0,r,\omega)A(\theta_r \omega)\omega_{r}}_X &\leq \tilde{C}_1\frac{e^{\lambda r}}{-r}\norm{\omega_r}_X\leq e^{\lambda r}\varepsilon \tilde{C}_1.
			\end{align*}
			These estimates allow to bound the first term in \eqref{eq:ProofTempMainIneq} in the following way,
			\begin{align*}
				\E[\norm{Z(\omega)}_X]&\leq \lim\limits_{a\to \infty}\Big(\E[\norm{-U(0,a,\omega)\omega_{-a}}_X]+\E\Big[\norm{\int_{-a}^0 U(0,r,\omega)A(\theta_r \omega)\omega_r~\txtd r}_X\Big] \Big)\\
				&\leq \lim\limits_{a\to \infty}\Big( C_U c_\varepsilon e^{-a(\lambda-\varepsilon)}+   \int_{-a}^0 e^{\lambda r}\varepsilon \tilde{C}_1~\txtd r\Big)\\ 
				&=\lim\limits_{a\to \infty}\Big( C_U c_\varepsilon e^{-a(\lambda-\varepsilon)}+ \frac{1-e^{-\lambda a}}{\lambda}\tilde{C}_1\varepsilon\Big)
				=\frac{1}{\lambda} \tilde{C}_1\varepsilon<\infty.
			\end{align*}
			The second term can be estimated by Doob's maximal inequality. In fact, using that $(\omega_t)_{t\in \R}$ is an $X$-valued martingale implies that
			\begin{align*}
				\E\Big[\sup\limits_{t\in [0,1]} \norm{\omega_t}_X\Big]\leq C \E\Big[(\sup\limits_{t\in [0,1]} \norm{\omega_t}_X)^2\Big]\leq 4C \E[\norm{\omega_1}^2_X],
			\end{align*}
			due to the embedding $L^2(\Omega)\hookrightarrow L^1(\Omega)$. Hence, this term is finite due to  \eqref{eq:GammaRadonifying}. The last term in \eqref{eq:ProofTempMainIneq} can be estimated using the Hölder continuity of the noise in Lemma \ref{lem:GrowthPropBM} \textit{ii)} which yields
			\begin{align*}
				&\E\Big[\sup\limits_{t\in [0,1]} \norm{\int_{0}^t U(t,r,\omega)A(\theta_r \omega)(\omega_t-\omega_r)~\txtd r}_X\Big]\\
                    &\leq\tilde{C}_1\E\Big[\sup\limits_{t\in [0,1]} \int_{0}^t\frac{e^{-\lambda(t-r)}}{t-r}\norm{\omega_t-\omega_r}_X ~\txtd r\Big]\\
				&\leq \tilde{C}_1\E\Big[\norm{\omega}_{C^\gamma([0,1];X)}\sup\limits_{t\in [0,1]} \int_{0}^t e^{-\lambda(t-r)} (t-r)^{\gamma-1} ~\txtd r\Big]\\
				&\leq C \tilde{C}_1 \E[\norm{\omega}_{C^\gamma([0,1];X)}]\leq  C \tilde{C}_1 \E[c_1(\omega,\gamma,0,1)]< \infty,
			\end{align*}
			where the integral is finite since $\gamma-1> -1$.
		\end{proof}

We now generalize the result and show that the Ornstein-Uhlenbeck process is tempered in fractional power spaces.
  
		\begin{proposition}\label{cor:HighRegTemp}
   For all $\beta\in [0,\frac{1}{2})$, the stationary Ornstein-Uhlenbeck process $Z:\R\times\Omega\to X_\beta$, $(t,\omega)\mapsto Z(\theta_t\omega)$ is tempered.
		\end{proposition}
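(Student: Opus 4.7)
The plan is to apply the sufficient condition \eqref{eq:TempCond}, i.e., to show $\E[\sup_{t\in[0,1]} \|Z(\theta_t\omega)\|_{X_\beta}] < \infty$. Directly mimicking the proof of Lemma \ref{lem:OUTemp} via the representation \eqref{eq:OUprocess} centered at time $0$ is unsuitable, since the natural bound $\|U(t,0,\omega) Z(\omega)\|_{X_\beta} \leq \tilde C_\beta t^{-\beta} e^{-\lambda t}\|Z(\omega)\|_X$ from \eqref{eq:EvFamIneq} is not uniformly controllable on $[0,1]$ once $\beta > 0$.

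First I would derive a shifted analogue of \eqref{eq:OUprocess}: for every $s < t$,
\begin{equation*}
Z(t,\omega) = U(t,s,\omega) Z(s,\omega) + U(t,s,\omega)(\omega_t-\omega_s) - \int_s^t U(t,r,\omega) A(\theta_r\omega)(\omega_t - \omega_r)\,\txtd r.
\end{equation*}
This is obtained by subtracting $U(t,s,\omega) Z(s,\omega)$ from the stationary formula $Z(t,\omega) = \omega_t + \int_{-\infty}^t U(t,r,\omega) A(\theta_r\omega) \omega_r\,\txtd r$ and then using $U(t,r,\omega) A(\theta_r\omega) = -\partial_r U(t,r,\omega)$ on the constant-in-$r$ piece $\omega_t$. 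Specializing to $s = t-1$ makes $U(t,t-1,\omega)$ map $X$ into $X_\beta$ with a uniform bound $\tilde C_\beta e^{-\lambda}$ via \eqref{eq:EvFamIneq}, thereby removing the original obstruction.

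Next, I would estimate each term in $X_\beta$. The first two are immediate: they are bounded by $\tilde C_\beta e^{-\lambda}\|Z(\theta_{t-1}\omega)\|_X$ and $\tilde C_\beta e^{-\lambda}\|\omega_t - \omega_{t-1}\|_X$, respectively. For the integral, I would combine the factorization $U(t,r,\omega) = U\bigl(t,\tfrac{t+r}{2},\omega\bigr) U\bigl(\tfrac{t+r}{2},r,\omega\bigr)$ with \eqref{eq:EvFamIneq} and \eqref{eq:EvFamIneq4} to obtain
\begin{equation*}
\bigl\|(-A(t,\omega))^\beta U(t,r,\omega) A(\theta_r\omega)\bigr\|_{\mathcal L(X)} \lesssim \frac{e^{-\lambda(t-r)}}{(t-r)^{1+\beta}},
\end{equation*}
and then absorb the singularity through the Hölder estimate $\|\omega_t-\omega_r\|_X \leq c_{t-1,t,\gamma}(\omega)|t-r|^\gamma$ from Lemma \ref{lem:GrowthPropBM} ii). Choosing $\gamma \in (\beta, 1/2)$, which is possible exactly because $\beta < 1/2$, the integral $\int_{t-1}^t (t-r)^{\gamma-1-\beta}\,\txtd r$ converges, and the third term is bounded by a constant multiple of $c_{t-1,t,\gamma}(\omega)$.

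Finally, taking $\sup_{t\in[0,1]}$ and expectation, each contribution is integrable: the first by Lemma \ref{lem:OUTemp} together with the remark after \eqref{eq:TempCond} allowing the interval $[0,1]$ to be replaced by $[-1,0]$; the second by Doob's maximal inequality applied to the $X$-valued martingale $\omega$ on $[-1,1]$; and the third by the pointwise bound $c_{t-1,t,\gamma}(\omega) \leq c_{-1,1,\gamma}(\omega)$ together with $c_{-1,1,\gamma} \in L^1(\Omega)$ from Lemma \ref{lem:GrowthPropBM} ii). The main obstacle is the non-integrable singularity $(t-r)^{-1-\beta}$ of the kernel near $r=t$ for $\beta > 0$: overcoming it requires both the shift to $s = t-1$ (to sidestep the $t^{-\beta}$ blow-up of $U(t,0,\omega) Z(\omega)$) and the Hölder cancellation $\omega_t - \omega_r$ (to tame $(t-r)^{-1-\beta}$), and the joint use of these two ingredients is precisely what produces the sharp chain $\beta < \gamma < 1/2$.
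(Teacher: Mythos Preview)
Your proof is correct and follows essentially the same route as the paper's: both recognize that the obstruction is the factor $t^{-\beta}$ in $\|U(t,0,\omega)Z(\omega)\|_{X_\beta}$, both cure it by introducing a unit time gap, and both handle the integral term via the factorization $U(t,r,\omega)=U(t,\tfrac{t+r}{2},\omega)U(\tfrac{t+r}{2},r,\omega)$ together with the H\"older bound on $\omega$ for some $\gamma\in(\beta,\tfrac12)$. The only cosmetic difference is \emph{where} the gap is placed: the paper keeps the representation \eqref{eq:OUprocess} centered at $0$ and simply takes the supremum over $[1,2]$ in \eqref{eq:TempCond} (invoking the remark that any interval works), whereas you keep the supremum over $[0,1]$ and instead shift the base point of the variation-of-constants formula to $s=t-1$; the paper's variant is marginally shorter since it avoids deriving the shifted identity, but the two arguments are otherwise interchangeable.
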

		\begin{proof}
			In comparison to the proof of Lemma \ref{lem:OUTemp}, we need to change the interval in the sufficient condition \eqref{eq:TempCond} to take into account the singularity in $0$ of $e^{-\lambda t}t^{-\beta}$. Consequently, we obtain
			\begin{align}\label{eq:ProofTempHighReg}
				\begin{split}
					\E\Big[\sup\limits_{t\in [1,2]} \norm{Z(\theta_t \omega)}_{X_\beta}\Big]&\leq  \E\bigg[\sup\limits_{t\in [1,2]}\norm{U(t,0,\omega)Z(\omega)}_{X_\beta}\bigg]+\E\Big[\sup\limits_{t\in [1,2]} \norm{U(t,0,\omega)\omega_t}_{X_\beta}\Big]\\
					&+\E\Big[\sup\limits_{t\in [1,2]} \norm{\int_{0}^t U(t,r,\omega)A(\theta_r \omega)(\omega_t-\omega_r)~\txtd r}_{X_\beta}\Big].
				\end{split}
			\end{align}
			Since the norm in $X_\beta$ is given by $\norm{(-A(\theta_t \omega))^\beta \cdot}_X$, we can estimate the terms similarly as in Lemma \ref{lem:OUTemp}. In fact, \eqref{eq:EvFamIneq} implies that
			\begin{align*}
				\norm{(-A(\theta_t \omega))^\beta U(t,0,\omega)Z(\omega)}_X\leq \tilde{C}_\beta \frac{e^{-\lambda t}}{t^\beta} \norm{Z(\omega)}_X,
			\end{align*}
			and since $\sup\limits_{t\in [1,2]} e^{-\lambda t} t^{-\beta}=e^{-\lambda}$, the first term in \eqref{eq:ProofTempHighReg} is bounded by
			\begin{align*}
				\E[\sup\limits_{t\in [1,2]}\norm{U(t,0,\omega)Z(\omega)}_{X_\beta}]\leq \tilde{C}_\beta e^{-\lambda}\E[\norm{Z(\omega)}_X]<\infty,
			\end{align*}
			where $\E[\norm{Z(\omega)}_X]<\infty$ as in Lemma \ref{lem:OUTemp}. For the second term in \eqref{eq:ProofTempHighReg} we use again Doob's maximal inequality and the $\gamma$-radonifying property \eqref{eq:GammaRadonifying} to conclude that
			\begin{align*}
				\E[\sup\limits_{t\in [1,2]}\norm{U(t,0,\omega)\omega_t}_{X_\beta}]&\leq \tilde{C}_\beta e^{-\lambda}\E[\sup\limits_{t\in [1,2]}\norm{\omega_t}_X]
				\leq C\tilde{C}_\beta e^{-\lambda}\E[(\sup\limits_{t\in [0,2]}\norm{\omega_t}_X)^2]\\ &\leq 4C\tilde{C}_\beta e^{-\lambda}\E[\norm{\omega_2}^2_X]<\infty.
			\end{align*}
			To estimate the last term in \eqref{eq:ProofTempHighReg}, we recall that we can extend the inequalities \eqref{eq:EvFamIneq}-\eqref{eq:EvFamIneq3} for all $x\in X$, see \eqref{eq:EvFamIneq4}. Therefore, we get 
			\begin{align*}
				\norm{U(t,r,\omega)A(\theta_r \omega)(\omega_t-\omega_r)}_{X_\beta}&\leq\norm{U\left(t,\frac{t+r}{2},\omega\right)}_{\mathcal{L}(X;X_\beta)}\norm{U\left(\frac{t+r}{2},r,\omega\right)A(\theta_r \omega)(\omega_t-\omega_r)}_{X}\\
				&\leq\tilde{C}_\beta \tilde{C}_1 e^{-\lambda\frac{t-r}{2}}\left(\frac{t-r}{2}\right)^{-\beta}e^{-\lambda\frac{t-r}{2}}\left(\frac{t-r}{2}\right)^{-1}\norm{\omega_t-\omega_r}_X\\
				&\leq 2^{1+\beta}\norm{\omega}_{C^\gamma([0,t];X)}\tilde{C}_\beta\tilde{C}_1 \frac{e^{-\lambda(t-r)}}{(t-r)^{1+\beta-\gamma}},
			\end{align*} 
            for $0\leq r\leq t$. 
            Integrating from $r=0$ to $t$, the resulting integral is finite if and only if $\gamma-1-\beta>-1$, so for $\beta<\gamma<\frac{1}{2}$ we have
			\begin{align*}
					\E\Big[\sup\limits_{t\in [1,2]} &\norm{\int_{0}^t U(t,r,\omega)A(\theta_r \omega)(\omega_t-\omega_r)~\txtd r}_{X_\beta}\Big]\\
					&\leq2^{1+\beta}\tilde{C}_\beta\tilde{C}_1\E\Big[\sup\limits_{t\in [1,2]} \norm{\omega}_{C^\gamma([0,t];X)}\int_{0}^t\frac{e^{-\lambda(t-r)}}{(t-r)^{1+\beta-\gamma}} ~\txtd r\Big]\\
					&\leq 2^{1+\beta}\tilde{C}_\beta\tilde{C}_1\E\Big[\norm{\omega}_{C^\gamma([0,2];X)}\sup\limits_{t\in [0,1]} \int_{0}^t \frac{e^{-\lambda(t-r)}}{(t-r)^{1+\beta-\gamma}} ~\txtd r\Big]\\
					&\leq 2^{1+\beta} C \tilde{C}_\beta\tilde{C}_1 \E[\norm{\omega}_{C^\gamma([0,2];X)}]\leq  2^{1+\beta}C \tilde{C}_\beta\tilde{C}_1 \E[c_1(\omega,\gamma,0,2)]< \infty,
			\end{align*}
			where we used again Lemma \ref{lem:GrowthPropBM} \textit{ii)} and the constant $C$ as a bound for the integral. 
		\end{proof}		

  In this framework, we point out the following result of independent interest on the temperedness of $Z$ in more regular spaces than $X_{\beta}$ for $\beta<1/2$.~To this end we assume that the noise is more regular in space, in particular, we consider an $X_\alpha$-valued Brownian motion, and show the temperedness in more regular fractional power spaces. As previously for $X$ it is defined using a $\gamma$-radonifying operator $G:H\to X_\alpha$. Now let $\Omega_\alpha\subset C_0(\R;X_\alpha)$ be the set such that \eqref{eq:GrowthBM} and \eqref{eq:HölderCondBM} are satisfied for an $X_\alpha$-valued Brownian motion 
        endowed with the $\sigma$-algebra $\mathcal{F}_\alpha:=\mathcal{B}(C_0(\R;X_\alpha))\cap \Omega_\alpha$ and let $\P_\alpha$  be the restriction of $\P_W$ to $\mathcal{F}_\alpha$. 
\begin{lemma}
For all $\alpha\in [0,1]$ and $\beta\in [\alpha,\alpha+\gamma)$, the Ornstein-Uhlenbeck process $Z:\R\times\Omega_\alpha\to X_\beta$ is tempered.
\end{lemma}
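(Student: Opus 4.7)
The plan is to mirror the argument of Proposition~\ref{cor:HighRegTemp}, but measure all noise quantities in the $X_\alpha$-norm and extract from the evolution family the extra smoothing of order $\beta-\alpha$. It again suffices to verify the condition~\eqref{eq:TempCond} on the interval $[1,2]$ (to avoid the singularity at $0$ of $t^{-(\beta-\alpha)}e^{-\lambda t}$), so using the representation~\eqref{eq:OUprocess} I would split
\[
\E\Big[\sup_{t\in[1,2]}\|Z(\theta_t\omega)\|_{X_\beta}\Big]
\leq \E\Big[\sup_{t\in[1,2]}\|U(t,0,\omega)Z(\omega)\|_{X_\beta}\Big]
+\E\Big[\sup_{t\in[1,2]}\|U(t,0,\omega)\omega_t\|_{X_\beta}\Big]
+\E\Big[\sup_{t\in[1,2]}\Big\|\int_0^t U(t,r,\omega)A(\theta_r\omega)(\omega_t-\omega_r)\,\txtd r\Big\|_{X_\beta}\Big],
\]
and show each term is finite.

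As a preliminary, I would verify that $Z(\omega)\in X_\alpha$ almost surely with $\E[\|Z(\omega)\|_{X_\alpha}]<\infty$. This is the same argument that constructed $Z(\omega)$ after~\eqref{eq:InitValu} and estimated it in Lemma~\ref{lem:OUTemp}, now carried out in $X_\alpha$: since the noise is $X_\alpha$-valued, the sublinear growth $\|\omega_r\|_{X_\alpha}\le \varepsilon|r|$ from Lemma~\ref{lem:GrowthPropBM}\,\textit{i)} holds on $\Omega_\alpha$, and~\eqref{eq:EvFamIneq} applied in the fractional power space gives $\|U(0,r,\omega)A(\theta_r\omega)\omega_r\|_{X_\alpha}\le \tilde{C}_1\, e^{\lambda r}|r|^{-1}\|\omega_r\|_{X_\alpha}$, which is integrable on $(-\infty,0]$.

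For the first two terms I would use the smoothing bound $\|U(t,0,\omega)x\|_{X_\beta}\le \tilde C_{\alpha,\beta}\, t^{-(\beta-\alpha)} e^{-\lambda t}\|x\|_{X_\alpha}$ for $x\in X_\alpha$, which follows from~\eqref{eq:EvFamIneq}--\eqref{eq:EvFamIneq2} by splitting $U(t,0,\omega)=U(t,t/2,\omega)U(t/2,0,\omega)$. The factor $t^{-(\beta-\alpha)}$ is uniformly bounded on $[1,2]$, so the first term is controlled by $\E[\|Z(\omega)\|_{X_\alpha}]$ from the preliminary step, and the second by Doob's maximal inequality combined with $\E[\|\omega_2\|_{X_\alpha}^2]<\infty$, guaranteed by the $\gamma$-radonifying property~\eqref{eq:GammaRadonifying} in $X_\alpha$.

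The integral term is the crux and reveals the sharp upper bound $\alpha+\gamma$ on $\beta$. The key observation is that $A(\theta_r\omega)(\omega_t-\omega_r)$ lives in $X_{\alpha-1}$ with norm $\|\omega_t-\omega_r\|_{X_\alpha}$. Splitting $U(t,r,\omega)=U(t,(t+r)/2,\omega)\,U((t+r)/2,r,\omega)$ and using the extension to $X_{\alpha-1}$ described in Remark~\ref{remark} together with~\eqref{eq:EvFamIneq} yields
\[
\|U(t,r,\omega)A(\theta_r\omega)(\omega_t-\omega_r)\|_{X_\beta}
\le C\,\frac{e^{-\lambda(t-r)}}{(t-r)^{1+\beta-\alpha}}\,\|\omega_t-\omega_r\|_{X_\alpha}.
\]
Inserting the H\"older estimate $\|\omega_t-\omega_r\|_{X_\alpha}\le \|\omega\|_{C^\gamma([0,2];X_\alpha)}(t-r)^\gamma$ from Lemma~\ref{lem:GrowthPropBM}\,\textit{ii)}, the resulting integrand is of order $(t-r)^{\gamma-(1+\beta-\alpha)}e^{-\lambda(t-r)}$, which is integrable on $[0,t]$ precisely when $\gamma-(1+\beta-\alpha)>-1$, i.e.~$\beta<\alpha+\gamma$. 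Taking expectation and using $\E[\|\omega\|_{C^\gamma([0,2];X_\alpha)}]<\infty$ finishes the proof. The main obstacle is exactly this interplay between the singularity $1+\beta-\alpha$ from smoothing against $A$ and the H\"older gain $\gamma$ from the noise: it is tight and dictates the range of admissible $\beta$.
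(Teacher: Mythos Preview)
Your proposal is correct and follows essentially the same approach as the paper: the same three-term splitting on $[1,2]$, the same midpoint decomposition of $U(t,r,\omega)$ for the integral term, and the same exponent count $(t-r)^{-(1+\beta-\alpha)+\gamma}$ that forces $\beta<\alpha+\gamma$. The only cosmetic difference is that the paper handles the first two terms via the cheap embedding $X_\alpha\hookrightarrow X$ and the already established bound $\E[\|Z(\omega)\|_X]<\infty$, rather than first upgrading to $\E[\|Z(\omega)\|_{X_\alpha}]<\infty$ as you do; and it phrases the integral-term estimate by writing $A=(-A)^{1-\alpha}(-A)^{\alpha}$ and invoking~\eqref{eq:EvFamIneq4} instead of the $X_{\alpha-1}$-extension language, which amounts to the same computation.
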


\begin{proof}
We observe that
\begin{align}\label{eq:ProofTempHighReg2}
\begin{split}
					\E\Big[\sup\limits_{t\in [1,2]} \norm{Z(\theta_t \omega)}_{X_\beta}\Big]&\leq  	\E\bigg[\sup\limits_{t\in [1,2]}\norm{U(t,0,\omega)Z(\omega)}_{X_\beta}\bigg]+\E\Big[\sup\limits_{t\in [1,2]} \norm{U(t,0,\omega)\omega_t}_{X_\beta}\Big]\\
					&+\E\Big[\sup\limits_{t\in [1,2]} \norm{\int_{0}^t U(t,r,\omega)A(\theta_r 	\omega)(\omega_t-\omega_r)~\txtd r}_{X_\beta}\Big].
\end{split}
\end{align}
The first two terms can be estimated exactly as in the proof of Corollary \ref{cor:HighRegTemp} using that $X_\alpha \hookrightarrow X$. Therefore, we only need to estimate the last term. We have for $t>r$
			\begin{align*}
&\norm{U(t,r,\omega)A(\theta_r \omega)(\omega_t-\omega_r)}_{X_\beta}\leq\norm{U\left(t,\frac{t+r}{2},\omega\right)}_{\mathcal{L}(X;X_\beta)}\norm{U\left(\frac{t+r}{2},r,\omega\right)A(\theta_r \omega)(\omega_t-\omega_r)}_{X}\\
				&\leq\tilde{C}_\beta e^{-\lambda\frac{t-r}{2}}\left(\frac{t-r}{2}\right)^{-\beta}\norm{U\left(\frac{t+r}{2},r,\omega\right)(-A(\theta_r \omega))^{1-\alpha}(-A(\theta_r \omega))^{\alpha}(\omega_t-\omega_r)}_{X}\\
				&\leq\tilde{C}_\beta \tilde{C}_{1-\alpha} e^{-\lambda\frac{t-r}{2}}\left(\frac{t-r}{2}\right)^{-\beta}e^{-\lambda\frac{t-r}{2}}\left(\frac{t-r}{2}\right)^{-1+\alpha}\norm{\omega_t-\omega_r}_{X_\alpha}\\
				&\leq 2^{1+\beta}\norm{\omega}_{C^\gamma([0,t];X_\alpha)}\tilde{C}_\beta\tilde{C}_{1-\alpha} \frac{e^{-\lambda(t-r)}}{(t-r)^{1+\beta-\alpha-\gamma}},
			\end{align*}
            where we used that the norms in the fractional power spaces are equivalent for $t\in [1,2]$, see Remark \ref{rem:EquivNormFracPower}. Therefore, all terms on the right-hand side in \eqref{eq:ProofTempHighReg2} are finite, i.e. $Z:\R\times\Omega_\alpha\to X_\beta$ is tempered.
		\end{proof}
	
		\section{Random attractors for reaction-diffusion equations}\label{main}
  
		Let $D\subset\R^N$ be a bounded domain with smooth boundary $\partial D$. We consider the following stochastic reaction-diffusion equation
        \begin{equation}\label{RDE}
            \begin{cases}
            \txtd u  = \bra{\nabla\cdot(\mathcal{E}(x,t,\omega)\nabla u ) + F(u) { + f(x)}}\txtd t + \sigma \txtd W_t(x,\omega), &x\in D,t>0,\omega\in\Omega,\\
            u(x,t,\omega)  = 0, &x\in\partial D, t>0,\omega\in\Omega,\\
            u(x,0,\omega) = u_0(x,\omega), &x\in D,\omega\in\Omega,
            \end{cases}
        \end{equation}
        where {the noise $W_t$} is white-in-time and colored-in-space with a covariance operator as introduced in Section~\ref{sd} with  $X:= L^2(D)$. The random diffusion matrix satisfies the following ellipticity and structural assumptions. 
      
        \begin{enumerate}[label=(\textbf{E\theenumi}),ref=\textbf{E\theenumi}]
        \item\label{A1} $\mathcal E: D\times \R_+ \times \Omega \to\R^{N\times N}_{\text{sym}}$ is {symmetric, bounded, H\"older continuous in $t$, continuously differentiable in $x$ (uniformly w.r.t. the remaining variables), and} uniformly elliptic, i.e. there is $\delta>0$ such that
        \begin{equation*}
            \xi^{\top}\mathcal{E}(x,t,\omega)\xi \ge \delta|\xi|^2 \quad \forall (x,t,\omega)\in D\times \R_+\times \Omega,\; \forall \xi\in \R^N.
        \end{equation*}
      \item\label{A2} For all $(t,\omega)\in \mathbb R\times \Omega$ it holds $\mathcal E(t,\omega) = \mathcal E(\theta_t\omega)$, where $\theta_t \omega(\cdot) = \omega(\cdot+t) - \omega(t)$.

    \end{enumerate}
          For problem \eqref{RDE} we consider $X = L^2(D)$ and denote the  inner product by $\langle\cdot,\cdot\rangle$. The corresponding fractional power spaces are defined through interpolation of $X$ and $X_{1}:= D((-\mathcal E(\cdot,t,\omega))) = H^2(D)\cap H_0^1(D)$. Note that in this case we can identify  $X_{1/2} = H_0^1(D)$ and its dual space as $X_{-1/2} = H^{-1}(D)$. We now define the operator $A(t,\omega): X_{1/2} \to X_{-1/2}$ by
    \begin{equation*}
        \langle A(t,\omega)u, v\rangle = \int_{D}\nabla v\cdot \mathcal{E}(x,t,\omega)\nabla u ~\txtd x, \quad \forall u, v\in X_{1/2}. 
    \end{equation*}
       
    The nonlinearity $F$ satisfies the following growth and dissipativity assumptions.
        \begin{enumerate}[label=(\textbf{F\theenumi}),ref=\textbf{F\theenumi}]
        \item\label{F} The function $F: \R \to \R$ is locally Lipschitz continuous. Moreover, either $N=1$ or $N\ge 2$ and there exists $C_F>0$ such that for all $u, v\in \mathbb R$,
        \begin{equation*}
            |F(u) - F(v)| \le C_F|u - v|\bra{|u|^{\rho-1} + |v|^{\rho-1}},
        \end{equation*}
       where the exponent $\rho$ satisfies
        \begin{equation}\label{rho}
            \rho < \rho_{\text{critical}}:= 
            \left\{
            \begin{aligned}
                &+\infty && \text{ if } N = 2,\\
                &\frac{N}{N-2} &&\text{ if } N\ge 3.
            \end{aligned}
            \right.
        \end{equation}
        \item\label{F1}
   There are positive constants $C_0, C_1$ such that
    \begin{equation}\label{dissi}
    F(u)u \le -C_0|u|^{1+\rho} + C_1 \quad \forall u\in \R
    \end{equation}
    where $\rho<+\infty$ arbitrary if $N=1$ and $\rho$ satisfies \eqref{rho} if $N\ge 2$.
\end{enumerate}
The local Lipschitz continuity and growth assumption in \eqref{F} imply the local existence of pathwise mild solutions, while the dissipativity assumption \eqref{F1} ensures
 global existence.
\\ 
Note that for $N\geq 2$ the assumption (\ref{F}) implies that  $$|F(u)|\leq C_F(1+|u|^\rho)$$ for all $u\in \R$, where we use under slight abuse of notation the same constant as in \eqref{F1}.

\begin{remark}

    {Due to the noise, the global well-posedness of~\eqref{eq:MainEq} and the existence of the random attractor are shown under the growth condition \eqref{rho} which is more restrictive than for deterministic autonomous problems where the subcritical growth assumption is $\rho<\frac{2N}{N-2}$}. This assumption only suffices for the local existence of solutions of~\eqref{RDE}. Moreover, we remark that in ~\cite{liu2017long} even for finite-dimensional noise the more restrictive condition $\rho \le 1 + \frac 4N$ was imposed.
\end{remark}

    \begin{lemma}\label{lemma:F}
        Let the assumption \eqref{F} be satisfied. Then, for any $s\in [0,1)$ and $\varepsilon\in(0,1/2)$ the Nemytskii operator
        $\widetilde{F}$,        \begin{equation*}
            \widetilde{F}(u):= F(u(\cdot)),
        \end{equation*}
        is Lipschitz continuous from $X_{1/2-\varepsilon}$ to $X_{-s/2}$ on bounded subsets of $X_{1/2-\varepsilon}$. 
    \end{lemma}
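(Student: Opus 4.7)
The strategy combines Sobolev embeddings of the fractional power spaces with the pointwise growth bound on $F$ from assumption~\eqref{F} and Hölder's inequality. The point is that $X_{1/2-\varepsilon}$ embeds into a Lebesgue space large enough that $F(u)-F(v)$, bounded pointwise by $|u-v|(|u|^{\rho-1}+|v|^{\rho-1})$, lies in a Lebesgue space that in turn embeds into $X_{-s/2}$.

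First, by Remark~\ref{rem:EquivNormFracPower} together with complex interpolation between $X=L^2(D)$ and $\cD_A=H^2(D)\cap H_0^1(D)$, the space $X_{1/2-\varepsilon}$ is isomorphic (with equivalent norms) to a fractional Sobolev space of order $1-2\varepsilon$ (with Dirichlet traces when $1-2\varepsilon>1/2$). The Sobolev embedding theorem then yields $X_{1/2-\varepsilon}\hookrightarrow L^q(D)$ for a suitable $q=q(N,\varepsilon)$; for $N\ge 3$ one has $q=\frac{2N}{N-2+4\varepsilon}$, for $N=2$ any $q<\infty$, and for $N=1$ with $\varepsilon<1/4$ one may take $q=\infty$. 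Analogously $X_{s/2}\hookrightarrow L^r(D)$ with $r=\frac{2N}{N-2s}$, whence by duality $L^{r'}(D)\hookrightarrow X_{-s/2}$ with $r'=\frac{2N}{N+2s}$.

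Next, for $u,v$ in a ball of radius $R$ in $X_{1/2-\varepsilon}$, the pointwise estimate in \eqref{F} combined with Hölder's inequality (splitting the integrand into the factors $|u-v|$ and $|u|^{\rho-1}+|v|^{\rho-1}$ with conjugate exponents chosen so that both remain integrable to the appropriate power in $L^q$) gives
\begin{equation*}
\|\widetilde F(u)-\widetilde F(v)\|_{L^{r'}} \le C_F\|u-v\|_{L^q}\bigl(\|u\|_{L^q}^{\rho-1}+\|v\|_{L^q}^{\rho-1}\bigr) \le C(R)\|u-v\|_{X_{1/2-\varepsilon}},
\end{equation*}
provided the exponent balance $\rho\, r'\le q$ holds. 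Together with $L^{r'}\hookrightarrow X_{-s/2}$ this yields the desired local Lipschitz continuity of $\widetilde F$ from $X_{1/2-\varepsilon}$ to $X_{-s/2}$. The one-dimensional case is handled separately: using $X_{1/2-\varepsilon}\hookrightarrow L^\infty(D)$ (for $\varepsilon<1/4$), the local Lipschitz continuity of $F$ directly yields $\|\widetilde F(u)-\widetilde F(v)\|_{L^2}\le L_R\|u-v\|_{L^2}\le L_R\|u-v\|_{X_{1/2-\varepsilon}}$.

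The main technical obstacle is the Sobolev exponent bookkeeping: one must verify that the subcritical growth assumption $\rho<N/(N-2)$ in \eqref{F} provides enough slack to close the exponent inequality $\rho\, r'\le q$, which amounts to $\rho(N-2+4\varepsilon)\le N+2s$ for $N\ge 3$. Intuitively, when $\varepsilon$ is small the exponent $q$ approaches the critical Sobolev value $\frac{2N}{N-2}$ and when $s$ is taken close to $1$ the exponent $r'$ can be brought down towards $\frac{2N}{N+2}$, which together provide precisely the flexibility needed for the applications of Theorem~\ref{local:existence} to problem \eqref{RDE}.
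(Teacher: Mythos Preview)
Your argument via Sobolev embeddings of the fractional power spaces, H\"older's inequality, and the duality embedding $L^{r'}(D)\hookrightarrow X_{-s/2}$ is exactly the route of \cite[Theorem~12.1]{carvalho2012attractors}, which the paper simply cites without reproducing; so your approach coincides with the paper's. Your final paragraph is also on point: the exponent balance $\rho(N-2+4\varepsilon)\le N+2s$ (and likewise the $N=1$ case when $\varepsilon\ge 1/4$, where $X_{1/2-\varepsilon}\not\hookrightarrow L^\infty$ and only local Lipschitz continuity is assumed) does not hold for every pair $(s,\varepsilon)$ in the stated range, so the lemma as literally phrased is slightly over-broad---the cited reference and the applications in the paper only require, and only establish, the result for compatible $(s,\varepsilon)$ with $\varepsilon$ small and $s$ sufficiently large relative to~$\rho$.
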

    

    
    \begin{proof}
        The lemma follows from the proof of \cite[Theorem 12.1]{carvalho2012attractors} so we omit it here.
    \end{proof} 
    We set $\widetilde{W}_t(\omega):= W_t(\cdot,\omega)$, where $W_t(\cdot,\omega)$ is the two-sided Brownian motion defined in Section~\ref{sd}. We recall that $\Omega\subset C_0(\R;X)$ is the $(\theta_t)_{t\in\R}$-invariant subset of full measure obtained in Lemma~\ref{lem:GrowthPropBM}. With a slight abuse of notation, we will write $F$ and $W$ instead of $\widetilde{F}$ and $\widetilde{W}$. The reaction-diffusion equation \eqref{RDE} can be then rewritten in the abstract form
    \begin{equation}\label{specific_equation}
				\txtd u(t,\omega) = (A(\theta_t\omega)u(t,\omega) + F(u(t,\omega)) {+f})~\txtd t + \sigma \txtd W_t(\omega).
			\end{equation}
		Let $Z(t,\omega)$ be { a stationary Ornstein-Uhlenbeck type process}, i.e. the { stationary} solution {of the linear equation}
		\begin{equation}\label{equation_Z}
			\begin{cases}
				\txtd Z(t,\omega) = A(\theta_t\omega)Z(t,\omega)~\txtd t + \txtd W_t(\omega),\\
				Z(\omega):= Z(0,\omega) = \int_{-\infty}^0U(0,r,\omega)A(\theta_r\omega)\omega_r~\txtd r,
			\end{cases}
		\end{equation}  
		and $v(t,\omega)=u(t,\omega) - \sigma Z(\theta_t\omega)$ be the solution of
		\begin{equation}\label{equation_v}
			\begin{cases}
				\dfrac{\txtd}{\txtd t}v(t) = A(\theta_t\omega)v(t) + F(v(t)+\sigma Z(\theta_t\omega)) {+f} ,\\
				v(0,\omega) = v_0(\omega):= u_0(\omega) + \sigma Z(\omega) = u_0(\omega) + \sigma \int_{-\infty}^0U(0,r,\omega)A(\theta_r\omega)\omega_r~\txtd r.
			\end{cases}
		\end{equation}
		Then $u(t,\omega) = v(t,\omega) + \sigma Z(\theta_t\omega)$ is the solution to the original problem \eqref{specific_equation}. 

    \medskip
    
    \medskip
    Combining Proposition \ref{cor:HighRegTemp} with Sobolev embeddings we obtain the following properties of the stationary Ornstein-Uhlenbeck process $Z$.
    \begin{lemma}\label{lem}
     Assume \eqref{A1}-\eqref{A2}. 
        The process $Z: \R\times \Omega \to X_{\beta}$ is tempered for any $\beta \in [0,\frac 12)$. Consequently, $Z$ is tempered in $L^q(D)$ where {$q = +\infty$ if $N=1$, $1\le q < +\infty$ arbitrary if $N=2$, and $1\le q < \frac{2N}{N-2}$ arbitrary if $N\ge 3$.}
    \end{lemma}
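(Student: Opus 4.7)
My plan is straightforward: the first assertion is a direct application of Proposition \ref{cor:HighRegTemp} in the present setting. Assumption \eqref{A1} guarantees that the operator $A(t,\omega)$ induced by $\mathcal E$ is sectorial and satisfies the Kato--Tanabe regularity of Assumption \ref{ass:OperatorFamily}, while \eqref{A2} provides the structural identity $A(t,\omega)=A(\theta_t\omega)$. Uniform ellipticity together with the Poincar\'e inequality on the bounded domain $D$ yields the exponential stability Assumption \ref{ass:ExpStable}. Proposition \ref{cor:HighRegTemp} then gives the temperedness of $Z:\R\times\Omega\to X_\beta$ for every $\beta\in[0,1/2)$.

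For the second assertion, the key observation I would use is that temperedness is stable under continuous embeddings: if $F\hookrightarrow E$ continuously and $\omega\mapsto\|Y(\theta_t\omega)\|_F$ is tempered, then, using the characterization of Remark \ref{rem:EquivTemp}, so is $\omega\mapsto\|Y(\theta_t\omega)\|_E$. I would then chain two embeddings. First, since $X_0=L^2(D)$ and $X_{1/2}=H_0^1(D)$ as identified in the paragraph after \eqref{RDE}, complex interpolation delivers $X_\beta\hookrightarrow H^{2\beta}(D)$ for every $\beta\in[0,1/2)$. Second, the classical Sobolev embedding provides
\[
H^{2\beta}(D)\hookrightarrow L^q(D), \quad \text{with } q=\tfrac{2N}{N-4\beta} \text{ if } 4\beta<N,\; q<\infty \text{ arbitrary if } 4\beta=N,\; q=\infty \text{ if } 4\beta>N.
\]

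It remains to choose $\beta\in[0,1/2)$ appropriately in each dimensional regime. For $N=1$, any $\beta\in(1/4,1/2)$ satisfies $4\beta>N$, hence $X_\beta\hookrightarrow L^\infty(D)$. For $N=2$ and any given $q\in[2,\infty)$, taking $\beta=1/2-1/q\in[0,1/2)$ produces $2N/(N-4\beta)=q$; the cases $1\le q<2$ follow from the trivial inclusion $L^2(D)\hookrightarrow L^q(D)$ on the bounded domain. For $N\ge 3$ and any $q$ with $1\le q<2N/(N-2)$, selecting $\beta\in(0,1/2)$ close enough to $1/2$ ensures $2N/(N-4\beta)>q$, and the subcritical cases $q<2$ again reduce to the trivial embedding. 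I do not anticipate any substantive obstacle here: the argument reduces to monotonicity of temperedness along continuous embeddings once Proposition \ref{cor:HighRegTemp} is in hand.
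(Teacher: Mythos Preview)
Your proposal is correct and follows exactly the approach indicated by the paper, which simply states before the lemma that the result is obtained by ``combining Proposition \ref{cor:HighRegTemp} with Sobolev embeddings''; you have merely spelled out the embedding details and the choice of $\beta$ in each dimensional regime that the paper leaves implicit.
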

    
    \subsection{Global existence}\label{global}
    {For the rest of this section, we consider the phase space $X_\alpha$ where $\alpha$ is chosen such that
    \begin{equation}\label{alpha}
        \frac{N(\rho-1)}{4(\rho+1)} \le \alpha < \min\left\{\frac N4, \frac 12\right\} \quad \text{ and } \quad X_{\alpha} \hookrightarrow \LO{2\rho}.
    \end{equation}
    Thanks to the conditions on $\rho$ specified in \eqref{F} and \eqref{F1}, such a constant $\alpha$ always exists.} 


    Throughout this section $C$ denotes a universal constant which varies from line to line.

    \begin{lemma}\label{lem:local_v}
        {Assume that \eqref{A1}-\eqref{A2} and \eqref{F} hold.
        Fix $\alpha$ satisfying \eqref{alpha} and let $f\in X_\alpha$}.  For any $\omega\in\Omega$ and initial data {$v_0(\omega)\in X_\alpha$} there exists a local mild solution to \eqref{equation_v} in $X_{\alpha}$ on the maximal interval of existence $(0,T_{\max}(\omega))$ in the following sense: the solution $v\in C([0,t];X_\alpha)$ and satisfies for all $t\in (0,T_{\max}(\omega))$ the variation of constants formula
        \begin{equation*}
            v(t) = U(t,0,\omega)v_0 + \int_0^tU(t,s,\omega) {(F(v(s) + \sigma Z(\theta_s\omega)) + f)}~\txtd s. 
        \end{equation*}
        Moreover, for each $t\in (0,T_{\max}(\omega))$, we have the regularity $v(t) \in X_\eta$ for any $\eta \in [\alpha,1)$.
    \end{lemma}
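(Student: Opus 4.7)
Fix $\omega\in\Omega$. By Proposition \ref{cor:HighRegTemp} the process $r\mapsto Z(\theta_r\omega)$ belongs to $C(\mathbb{R};X_{\alpha})$ (using $\alpha<1/2$), hence is bounded on every compact subinterval. Choose $s\in[0,1)$ with $\alpha+s/2<1$; such a choice exists since $\alpha<1/2$. Lemma \ref{lemma:F} (with $\varepsilon=1/2-\alpha$, so that $X_{1/2-\varepsilon}=X_\alpha$) then gives that $F:X_\alpha\to X_{-s/2}$ is Lipschitz continuous on bounded sets, while the growth bound $|F(u)|\le |F(0)|+C_F|u|^\rho$ together with the embedding $X_\alpha\hookrightarrow L^{2\rho}(D)$ from \eqref{alpha} yields
\begin{equation*}
\|F(u)\|_{X_{-s/2}}\le C\|F(u)\|_{X}\le C(1+\|u\|_{X_\alpha}^\rho),\qquad u\in X_\alpha.
\end{equation*}
Consequently the time-dependent nonlinearity $(r,v)\mapsto F(v+\sigma Z(\theta_r\omega))+f$ inherits local Lipschitz continuity and polynomial growth in $v$ as a map into $X_{-s/2}$, with constants uniform on any compact time interval.

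Since \eqref{equation_v} is a random PDE with no stochastic convolution, the contraction mapping argument of Theorem \ref{local:existence} applies pathwise and simplifies. Concretely, for each $\omega\in\Omega$ we look for a fixed point of
\begin{equation*}
\Phi(v)(t):=U(t,0,\omega)v_0+\int_0^t U(t,r,\omega)\bigl[F(v(r)+\sigma Z(\theta_r\omega))+f\bigr]\,\txtd r
\end{equation*}
on a suitable closed ball in $C([0,\tau(\omega)];X_\alpha)$, with $\tau(\omega)>0$ chosen exactly as in the proof of Theorem \ref{local:existence} to guarantee contractivity. This produces a unique local mild solution, and the standard extension procedure yields a maximal pathwise mild solution on $[0,T_{\max}(\omega))$.

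For the improved regularity, fix $t\in(0,T_{\max}(\omega))$ and $\eta\in[\alpha,1)$. Using $X_\alpha\hookrightarrow L^{2\rho}(D)$ and the polynomial growth of $F$ once more, for every $r\in[0,t]$ the source term $g(r):=F(v(r)+\sigma Z(\theta_r\omega))+f$ lies in $X=L^2(D)$ with
\begin{equation*}
\|g(r)\|_X\le C\bigl(1+\|v(r)\|_{X_\alpha}^\rho+\sigma^\rho\|Z(\theta_r\omega)\|_{X_\alpha}^\rho\bigr)+\|f\|_X,
\end{equation*}
and this quantity is bounded uniformly on $[0,t]$ by continuity of $v$ and of $Z(\theta_\cdot\omega)$. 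The smoothing estimate \eqref{eq:EvFamIneq} then gives
\begin{equation*}
\|U(t,0,\omega)v_0\|_{X_\eta}\le \tilde C_\eta\, t^{-\eta}\|v_0\|_X
\end{equation*}
and
\begin{equation*}
\Bigl\|\int_0^t U(t,r,\omega)g(r)\,\txtd r\Bigr\|_{X_\eta}\le \tilde C_\eta\int_0^t (t-r)^{-\eta}e^{-\lambda(t-r)}\|g(r)\|_X\,\txtd r<\infty,
\end{equation*}
where the last integral converges because $\eta<1$. Combined with the variation of constants formula this proves $v(t)\in X_\eta$.

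The main obstacle is the bookkeeping of regularity indices: the polynomial growth coming from Lemma \ref{lemma:F} must map into $X_{-s/2}$ with $s/2<1-\alpha$, and we must place the source $g(r)=F(v(r)+\sigma Z(\theta_r\omega))+f$ into $L^2$ so that a single smoothing step reaches every $\eta<1$. Both requirements are ensured by the choice of $\alpha$ in \eqref{alpha}, together with the embedding $X_\alpha\hookrightarrow L^{2\rho}(D)$ and the temperedness of $Z$ in $X_\alpha$, so that no iterative bootstrap is required at this stage.
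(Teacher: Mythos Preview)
Your proof is correct and follows essentially the same approach as the paper: local existence via a contraction argument in $C([0,\tau];X_\alpha)$ (the paper simply calls this ``standard''), followed by a single smoothing step that places $F(v+\sigma Z)$ in $X=L^2(D)$ via the embedding $X_\alpha\hookrightarrow L^{2\rho}(D)$ and then applies the parabolic regularization estimate \eqref{eq:EvFamIneq} to reach $X_\eta$ for any $\eta<1$. The only cosmetic difference is that the paper bounds $\|U(t,0,\omega)v_0\|_{X_\eta}$ by $\tilde C_{\eta,\alpha}t^{\alpha-\eta}\|v_0\|_{X_\alpha}$ (using the full $X_\alpha$-regularity of $v_0$) rather than your coarser $\tilde C_\eta t^{-\eta}\|v_0\|_X$, but both are finite for $t>0$.
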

    \begin{proof}
       The local existence of $v$ is standard and follows by a fixed point argument in $C([0,t];X_\alpha)$ for $t\in[0,T_{\text{max}})$. To show the regularity of $v$, we use the smoothing effect of the parabolic evolution family $U$ in Lemma \ref{lem:regularity_process} and the growth assumption on $F$. For $\eta \in [\alpha, 1)$ and any $t\in (0,T_{\max})$ we obtain,  
        \begin{equation*}
            \begin{aligned}
                &\|v(t)\|_{X_\eta} \le \|U(t,0,\omega)v_0\|_{X_\eta} + \int_0^t\|U(t,s,\omega)\baonew{(F(v(s)+\sigma Z(\theta_s\omega)) + f)}\|_{X_\eta}~\txtd s\\
                &\le \tilde{C}_{\eta,\alpha} e^{-\lambda t}{t^{\alpha-\eta}}\|v_0\|_{X_\alpha} + C\int_0^t(t-s)^{-\eta}\|F(v(s) + \sigma Z(\theta_s\omega))\|_{X}~\txtd s + \baonew{C\|f\|_{X_\alpha}\int_0^t(t-s)^{-\eta+\alpha}~\txtd s}\\
                &\le \tilde{C}_{\eta,\alpha} e^{-\lambda t}{t^{\alpha-\eta}}\|v_0\|_{X_\alpha} + CC_F\int_0^t(t-s)^{-\eta}\bra{{1+}\|v(s)\|_{\LO{2\rho}}^{\rho} + \sigma^\rho\|Z(\theta_s\omega)\|_{\LO{2\rho}}^{\rho}}~\txtd s + \baonew{C\|f\|_{X_\alpha}}\\
                &\le \tilde{C}_{\eta,\alpha}{e^{-\lambda t}}{t^{\alpha-\eta}}\|v_0\|_{X_\alpha} + CC_F\bra{\sup_{s\in [0,t]}\left[\|v(s)\|_{X_\alpha}^{\rho} + \sigma^\rho\|Z(\theta_s\omega)\|_{X_\alpha}^{\rho}\right]}\int_0^t(t-s)^{-\eta}~\txtd s + \baonew{C\|f\|_{X_\alpha}}\\
                &<+\infty,
            \end{aligned}
        \end{equation*}
        thanks to the embedding $X_\alpha \hookrightarrow \LO{2\rho}$, together with the fact that $v\in C([0,t];X_\alpha)$ and {$Z(\cdot)\in C([0,t];X_\alpha)$}. The previous computation also entails that $v(t)\in X_{\eta}$ for every $t\in(0,T_{\text{max}}(\omega))$.  
    \end{proof}
        
    \bao{In the following, once $\omega$ is fixed we write $T_{\max}$ instead of $T_{\max}(\omega)$ for simplicity}. Here $T_{\text{max}}=T_{\text{max}}(\omega)$ denotes the maximal existence time as constructed in the proof of Theorem~\ref{local:existence}.

    \begin{prop}\label{RDE_local_existence}
        Let the assumptions in Lemma \ref{lem:local_v} hold. Then for every $\omega\in\Omega$ and any initial data $u_0(\omega)\in X_{\alpha}$, there exists a unique local mild solution $u$ of \eqref{specific_equation} 
        with maximal interval of existence  $(0,T_{\max})$. 
        
    \end{prop}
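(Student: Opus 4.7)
The plan is to obtain the local mild solution of \eqref{specific_equation} by the standard transformation $u(t,\omega) = v(t,\omega) + \sigma Z(\theta_t\omega)$, reducing the stochastic problem to the random PDE \eqref{equation_v}, which has already been treated in Lemma~\ref{lem:local_v}. Fix $\omega\in\Omega$ and $u_0(\omega)\in X_\alpha$. First, I would set $v_0(\omega):= u_0(\omega) - \sigma Z(\omega)$; by Lemma~\ref{lem} the initial value $Z(\omega)$ of the Ornstein--Uhlenbeck process lies in $X_\beta$ for every $\beta\in[0,1/2)$, and in particular in $X_\alpha$, so $v_0(\omega)\in X_\alpha$.

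Next, I would invoke Lemma~\ref{lem:local_v} applied to the initial datum $v_0(\omega)$ to produce a unique local mild solution $v\in C([0,T_{\max});X_\alpha)$ of \eqref{equation_v}, where $T_{\max}=T_{\max}(\omega)>0$ denotes its maximal existence time. Then I would define
\[
    u(t,\omega) := v(t,\omega) + \sigma Z(\theta_t\omega), \qquad t\in[0,T_{\max}),
\]
which belongs to $C([0,T_{\max});X_\alpha)$ since $(t,\omega)\mapsto Z(\theta_t\omega)$ has continuous sample paths in $X_\alpha$ (cf.~\eqref{eq:OUprocess} and the decomposition used in Proposition~\ref{cor:HighRegTemp}).

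The key computation is to verify that $u$ satisfies the pathwise mild formula of Definition~\ref{loc:sol}. Starting from the variation of constants formula for $v$ given by Lemma~\ref{lem:local_v} and from the pathwise mild representation
\[
    Z(\theta_t\omega) = U(t,0,\omega)Z(\omega) + U(t,0,\omega)\omega_t - \int_0^t U(t,r,\omega)A(\theta_r\omega)(\omega_t - \omega_r)~\txtd r
\]
from \eqref{eq:OUprocess}, one adds the two identities (multiplying the second by $\sigma$) and uses $v_0 = u_0 - \sigma Z(\omega)$. The term $\sigma U(t,0,\omega)Z(\omega)$ cancels between $U(t,0,\omega)v_0$ and $\sigma Z(\theta_t\omega)$, and one obtains
\begin{align*}
    u(t) &= U(t,0,\omega)u_0(\omega) + \sigma U(t,0,\omega)\omega_t + \int_0^t U(t,s,\omega)(F(u(s))+f)~\txtd s \\
         &\quad - \sigma \int_0^t U(t,r,\omega)A(\theta_r\omega)(\omega_t - \omega_r)~\txtd r,
\end{align*}
which is exactly the pathwise mild formulation, recalling the identification $W_t(\omega)=\omega_t$.

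Finally, for uniqueness, I would argue by contradiction: if $\tilde u$ were another local pathwise mild solution of \eqref{specific_equation} on some interval $[0,T)$ with $\tilde u(0,\omega)=u_0(\omega)$, then $\tilde v := \tilde u - \sigma Z(\theta_\cdot\omega)$ would be a local mild solution of \eqref{equation_v} with the same initial datum $v_0$, and the uniqueness statement in Lemma~\ref{lem:local_v} would force $\tilde v\equiv v$ on the common interval of existence, hence $\tilde u\equiv u$. Maximality of the interval $(0,T_{\max})$ is transported directly from that of $v$, since $\|u(t)\|_{X_\alpha}$ and $\|v(t)\|_{X_\alpha}$ differ only by the $X_\alpha$-continuous term $\sigma Z(\theta_t\omega)$. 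The main subtlety is purely bookkeeping in step three, namely tracking the cancellation of the $U(t,0,\omega)Z(\omega)$ terms and correctly recovering the modified stochastic convolution from the pathwise formula for $Z$; everything else follows from results already established in the paper.
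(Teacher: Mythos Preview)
Your argument is correct, but it takes a different route from the paper's. The paper's one-line proof simply invokes the abstract existence result Theorem~\ref{local:existence} directly for the stochastic equation~\eqref{specific_equation}, with Lemma~\ref{lem} (equivalently Proposition~\ref{cor:HighRegTemp}) supplying the required $X_\alpha$-regularity of the modified stochastic convolution~$h$, and Lemma~\ref{lemma:F} providing the local Lipschitz property of $F:X_\alpha\to X_{-\beta}$. You instead first apply Lemma~\ref{lem:local_v} to obtain the local solution $v$ of the transformed random PDE~\eqref{equation_v}, and then add back $\sigma Z(\theta_t\omega)$ using the pathwise representation~\eqref{eq:OUprocess} to verify that $u=v+\sigma Z$ satisfies the pathwise mild formula of Definition~\ref{loc:sol}. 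Your approach is slightly longer but has the advantage of staying entirely within the pathwise framework (``for every $\omega\in\Omega$''), matching the formulation of the proposition, and of making the correspondence between $u$ and $v$ explicit---which is in any case the backbone of the subsequent global-existence and attractor arguments. The paper's route is quicker but implicitly requires checking that the hypotheses of Theorem~\ref{local:existence} on $h$ are met, which amounts to essentially the same $X_\alpha$-estimates you invoke for $Z$.
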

    \begin{proof}
        The statement immediately follows from Theorem~\ref{local:existence} and Lemma \ref{lem}. 
    \end{proof}


    
Recall that $X_\alpha\hookrightarrow L^{2\rho}(D)$ if $\alpha$ satisfies \eqref{alpha}. Since $u_0(\omega)\in X_\alpha$, it follows that $v_0(\omega)\in X_\alpha$.

\begin{lemma}\label{lem:L2bound}
        {Assume that \eqref{A1}-\eqref{A2}, \eqref{F} and \eqref{F1} hold.
        Fix $\alpha$ satisfying \eqref{alpha} and let $f\in X_\alpha$}. Then  for {every} $\omega\in\Omega$ and initial data {$v_0(\omega)\in X_\alpha$} we have
        \begin{equation*}
            \|v(t)\|_{L^2(D)} \le C(T_{\max}, \bao{\|f\|_{X_\alpha}}) \quad \forall t\in (0,T_{\max}),
        \end{equation*}
        where $C(T_{\max},\bao{\|f\|_{X_\alpha}})$ depends continuously on $T_{\max}$ \bao{and on the norm of the external force $\|f\|_{X_\alpha}$}.
    \end{lemma}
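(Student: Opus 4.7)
The plan is to perform a standard $L^2$ energy estimate on the equation for $v$, exploiting the dissipativity assumption \eqref{F1} together with the fact that $Z(\cdot,\omega)$ has continuous sample paths in $L^{\rho+1}(D)$ by Lemma~\ref{lem} and Sobolev embedding. First I would note that by Lemma~\ref{lem:local_v}, for every $t\in(0,T_{\max})$ we have $v(t)\in X_\eta$ for all $\eta\in[\alpha,1)$; in particular $v(t)\in X_{1/2}=H_0^1(D)$, so pairing the equation with $v(t)$ in $L^2(D)$ is well-defined, and the usual chain rule for $\tfrac12\tfrac{d}{dt}\|v\|_{L^2(D)}^2$ holds on any subinterval $[\varepsilon,T]\subset(0,T_{\max})$.

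I would then take the $L^2$ inner product of \eqref{equation_v} with $v$. Integration by parts and the uniform ellipticity \eqref{A1} yield
\[
\langle A(\theta_t\omega)v,v\rangle = -\int_D \nabla v^\top \mathcal{E}(x,t,\omega)\nabla v\,dx \le -\delta\|\nabla v\|_{L^2(D)}^2.
\]
For the nonlinear term, writing $u=v+\sigma Z(\theta_t\omega)$, I split
\[
\langle F(u),v\rangle = \langle F(u),u\rangle - \sigma\langle F(u),Z(\theta_t\omega)\rangle.
\]
The first piece is controlled by dissipativity \eqref{F1}: $\langle F(u),u\rangle\le -C_0\|u\|_{L^{\rho+1}(D)}^{\rho+1}+C_1|D|$. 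For the cross term, the growth bound $|F(u)|\le C_F(1+|u|^\rho)$ implied by \eqref{F}, together with Hölder's inequality (conjugate exponents $\tfrac{\rho+1}{\rho}$ and $\rho+1$) and Young's inequality, gives
\[
\sigma|\langle F(u),Z(\theta_t\omega)\rangle| \le \frac{C_0}{2}\|u\|_{L^{\rho+1}(D)}^{\rho+1} + C\bigl(1+\|Z(\theta_t\omega)\|_{L^{\rho+1}(D)}^{\rho+1}\bigr).
\]
The forcing is handled by Cauchy--Schwarz and Young: $|\langle f,v\rangle|\le\tfrac12\|f\|_{L^2(D)}^2+\tfrac12\|v\|_{L^2(D)}^2$, which makes sense since $f\in X_\alpha\hookrightarrow L^2(D)$.

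Combining these estimates produces a differential inequality of the form
\[
\frac{d}{dt}\|v\|_{L^2(D)}^2 + 2\delta\|\nabla v\|_{L^2(D)}^2 + C_0\|u\|_{L^{\rho+1}(D)}^{\rho+1} \le C\Bigl(1+\|v\|_{L^2(D)}^2 + \|Z(\theta_t\omega)\|_{L^{\rho+1}(D)}^{\rho+1}+\|f\|_{L^2(D)}^2\Bigr).
\]
Discarding the two nonnegative terms on the left and applying Gronwall's inequality on $[\varepsilon,t]$ and then sending $\varepsilon\downarrow 0$ (using continuity of $v$ in $X_\alpha\hookrightarrow L^2(D)$) gives
\[
\|v(t)\|_{L^2(D)}^2 \le e^{Ct}\Bigl(\|v_0\|_{L^2(D)}^2 + C\int_0^t\bigl(1+\|Z(\theta_s\omega)\|_{L^{\rho+1}(D)}^{\rho+1}+\|f\|_{L^2(D)}^2\bigr)\,ds\Bigr).
\]
Since the subcriticality \eqref{rho} guarantees $\rho+1<2N/(N-2)$ in the relevant range, Lemma~\ref{lem} yields $Z(\cdot,\omega)\in C([0,T_{\max}];L^{\rho+1}(D))$, so the integral is finite and depends continuously on $T_{\max}$ and on $\|f\|_{X_\alpha}$, delivering the claimed bound.

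The main obstacle is ensuring that the cross term $\sigma\langle F(u),Z\rangle$ can actually be absorbed into the dissipative $L^{\rho+1}$ quantity: this hinges precisely on the fact that $Z(\theta_t\omega)\in L^{\rho+1}(D)$ with continuous dependence on $t$, which in turn uses the subcritical growth condition \eqref{rho} together with the temperedness result of Proposition~\ref{cor:HighRegTemp} and Sobolev embeddings summarized in Lemma~\ref{lem}. A minor secondary point is to justify the energy identity despite $v_0$ only being in $X_\alpha$, which is handled by the instantaneous parabolic smoothing recorded in Lemma~\ref{lem:local_v}.
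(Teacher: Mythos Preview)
Your proof is correct and follows essentially the same strategy as the paper: take the $L^2$ inner product with $v$, split $\langle F(v+\sigma Z),v\rangle=\langle F(v+\sigma Z),v+\sigma Z\rangle-\sigma\langle F(v+\sigma Z),Z\rangle$, apply the dissipativity \eqref{F1} to the first piece and the growth bound from \eqref{F} with Young's inequality to absorb the cross term.

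The one noteworthy difference is in how the resulting differential inequality is handled. You keep the term $\|v\|_{L^2(D)}^2$ on the right and apply Gronwall, which gives a bound of the form $e^{Ct}(\cdots)$. The paper instead uses the Poincar\'e inequality $\|v\|_{H_0^1(D)}\ge\lambda_1\|v\|_{L^2(D)}$ to convert the coercive term $-\delta\|\nabla v\|_{L^2(D)}^2$ into $-\delta\lambda_1\|v\|_{L^2(D)}^2$ on the left, arriving at the sharper inequality
\[
\frac{\txtd}{\txtd t}\|v\|_{L^2(D)}^2+\delta\lambda_1\|v\|_{L^2(D)}^2\le C\bigl(\|Z(\theta_t\omega)\|_{X_\alpha}^{\rho+1}+1\bigr),
\]
which after integration yields an exponentially weighted estimate $\|v(t)\|_{L^2(D)}^2\le e^{-\delta\lambda_1 t}\|v_0\|_{L^2(D)}^2+C\int_0^te^{-\delta\lambda_1(t-s)}\|Z(\theta_s\omega)\|_{X_\alpha}^{\rho+1}\,\txtd s+C$. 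For the present lemma both approaches suffice, but the paper's decay form is what is later reused verbatim (as \eqref{L2_estimate}) to build the random absorbing set in Lemma~\ref{lem1}, where a bound that grows like $e^{Ct}$ would be useless.
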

    \begin{remark}
        \bao{For simplicity, in the following we will drop the dependency on $\|v_0(\omega)\|_{L^2(D)}$,  $\|f\|_{X_\alpha}$ in $C(T_{\max}, \|f\|_{X_\alpha})$ and simply write $C(T_{\max})$.}
    \end{remark}
    
    \begin{proof}
    Taking the inner product of \eqref{equation_v} with $v$ in $L^2(D)$ { and using the regularity of $v$ established in Lemma~\ref{lem:local_v}},  the assumptions \eqref{A1} and \eqref{F1} imply that 
    \begin{equation}\label{e1}
    \begin{aligned}
        \frac{1}{2}\frac{\txtd}{\txtd t}\|v\|_{L^2(D)}^2 &= \langle -A(\theta_t\omega)v, v\rangle + \int_{D}F(v + \sigma Z)v~\txtd x + \baonew{\int_D fv ~\txtd x}\\
        &\le -\delta\|v\|_{H_0^1(D)}^2  - C_0\int_{D}|v+\sigma Z|^{1+\rho}~\txtd x + C_1|D| - \sigma \int_{D}F(v+\sigma Z)Z~\txtd x\\
        &\quad + \baonew{\|v\|_{\LO{2}}\|f\|_{\LO{2}}}\\
        &\le -\delta\|v\|_{H_0^1(D)}^2 -C(\rho)\int_{D}|v|^{1+\rho}~\txtd x + C(\rho,\sigma)\int_{D}|Z|^{1+\rho}~\txtd x\\
        &\quad + C_1|D| - \sigma \int_{D}F(v+\sigma Z)Z~\txtd x   \baonew{ + \frac{\lambda_1}{2}\|v\|_{\LO{2}}^2 + \frac{1}{2\lambda_1}\|f\|_{\LO{2}}^2}.
    \end{aligned}
    \end{equation}
    Here, we used the norm $\|u\|_{H_0^1(D)} = \|\nabla u\|_{L^2(D)}$ which is equivalent to the $H^1(D)$ norm due to the homogeneous Dirichlet boundary condition.
    To deal with the last term on the right-hand side, we use the growth condition \eqref{F} and obtain
        \begin{equation*}
        \begin{aligned}
            \int_{D}F(v+\sigma Z)Z~\txtd x &\le C_F\int_{D}|Z|\bra{|v|^{\rho} + |Z|^{\rho} + 1}~\txtd x\\
            &\le C(\rho)\Big(\frac{1}{2}\int_{D}|v|^{\rho+1}~\txtd x + \int_{D}|Z|^{\rho+1}~\txtd x + 1\Big).
        \end{aligned}
        \end{equation*}
        We now use the Poincar\'e inequality $\|v\|_{H_0^1(D)} \ge \lambda_1\|v\|_{L^2(D)}$ \baonew{ and the continuous embeddings $\|f\|_{\LO{2}} \le C\|f\|_{X_\alpha}$} { and $\|Z\|_{L^{\rho+1}}\leq C \|Z\|_{X_\alpha}$ } in \eqref{e1} to get 
        \begin{equation}\label{h0}
            \begin{aligned}
            \frac{\txtd}{\txtd t}\|v\|_{L^2(D)}^2 +  \delta \lambda_1\|v\|_{L^2(D)}^2 + C(\rho)\int_{D}|v|^{\rho+1}~\txtd x& \le C\bra{\int_{D}|Z|^{\rho+1}~\txtd x+1 + \baonew{\|f\|_{X_\alpha}}}\\
            &\le C\bra{\|Z\|_{X_{\alpha}}^{\rho+1}+1}.
            \end{aligned}
        \end{equation}
        This implies that 
        \begin{equation}\label{L2_estimate}
        \begin{aligned}
            \|v(t,\omega,v_0(\omega))\|_{L^2(D)}^2 &\le e^{-\delta \lambda_1 t}\|v_0(\omega)\|_{L^2(D)}^2 + C\bra{\int_0^te^{-\delta\lambda_1(t-s)}\|Z(\theta_s\omega)\|_{X_{\alpha}}^{\rho+1}~\txtd s+1}\\
            &\le \|v_0(\omega)\|_{L^2(D)}^2 + C\bra{\int_0^{T_{\max}}\|Z(\theta_s\omega)\|_{X_{\alpha}}^{\rho+1}~\txtd s + 1 },
        \end{aligned}
        \end{equation}
    which shows the desired estimate (since $Z\in C([0,T_{\text{max}}];X_\alpha)$), and the continuous dependence of the constant $C(T_{\text{max}})$ on $T_{\text{max}}$.
    \end{proof}


\begin{lemma}\label{lem2}
        Let the assumptions in Lemma \ref{lem:L2bound} hold.  Then {for every} $\omega\in\Omega$,  initial data {$v_0(\omega)\in X_\alpha$} and
        for any $\tau\in (0,T_{\max})$ we have
        \begin{equation*}
            \|v(t+\tau)\|_{L^{\rho+1}(D)} \le \bao{C(\tau,T_{\max})} \quad \forall t\in (0,T_{\max}-\tau),
        \end{equation*}
        where \bao{$C(\tau, T_{\max})$} depends continuously on $\tau$ and $T_{\max}$. 
\end{lemma}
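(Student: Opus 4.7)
The plan is to derive a differential inequality for $y(t) := \|v(t)\|_{L^{\rho+1}(D)}^{\rho+1}$ of the form $y'(t) \le h(t)$ with $h \in L^1(0,T_{\max})$, then combine it with the integral bound $\int_0^{T_{\max}} y(s)\,ds \le C(T_{\max})$ obtained by integrating \eqref{h0}, and conclude via a uniform Gronwall type averaging argument.

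For the differential inequality, I would test \eqref{equation_v} with $|v|^{\rho-1}v$, which is admissible because Lemma \ref{lem:local_v} provides the regularity $v(t)\in X_\eta$ for any $\eta \in [\alpha,1)$ on $(0,T_{\max})$. The diffusion term becomes $-\rho\int_D |v|^{\rho-1}\nabla v\cdot \mathcal E\nabla v\,dx$, which is nonpositive by the ellipticity \eqref{A1} and can therefore be discarded. For the reaction term I would use the decomposition
\[
F(v+\sigma Z)|v|^{\rho-1}v = F(v+\sigma Z)(v+\sigma Z)|v|^{\rho-1} - \sigma F(v+\sigma Z)Z|v|^{\rho-1}.
\]
Multiplying the dissipativity \eqref{F1} by the nonnegative weight $|v|^{\rho-1}$ and using $|v|^{\rho+1}\le 2^{\rho}\bigl(|v+\sigma Z|^{\rho+1}+|\sigma Z|^{\rho+1}\bigr)$ extracts a controlling term $-c\,|v|^{2\rho}$ plus a remainder of the form $C|Z|^{\rho+1}|v|^{\rho-1}$. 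The cross term $|F(v+\sigma Z)||Z||v|^{\rho-1}$ is handled by the growth bound in \eqref{F}, which produces terms of type $|Z|^a|v|^b$ with $b\le 2\rho-1$. All such remainders, together with the $f$-contribution $\int f|v|^{\rho-1}v\,dx$, are absorbed into $\tfrac{c}{2}|v|^{2\rho}$ by repeated application of Young's inequality with appropriate conjugate exponents (typically $\bigl(\tfrac{2\rho}{\rho-1},\tfrac{2\rho}{\rho+1}\bigr)$ or $\bigl(\tfrac{2\rho}{2\rho-1},2\rho\bigr)$). Using the embedding $X_\alpha\hookrightarrow L^{2\rho}(D)$ from \eqref{alpha} and $f\in X_\alpha$, this yields
\[
\frac{d}{dt}\|v\|_{L^{\rho+1}}^{\rho+1} + c\,\|v\|_{L^{2\rho}}^{2\rho} \le C\bigl(1+\|Z(\theta_t\omega)\|_{X_\alpha}^{2\rho}+\|f\|_{X_\alpha}^{2}\bigr)=: h(t),
\]
and the continuity $Z(\cdot)\in C([0,T_{\max}];X_\alpha)$ ensures $\int_0^{T_{\max}}h(s)\,ds \le C(T_{\max})$.

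For the conclusion, I integrate the resulting inequality $y'\le h$ from $s$ to $t+\tau$ for any $s\in[t,t+\tau]$ with $t\in(0,T_{\max}-\tau)$, giving $y(t+\tau)\le y(s)+\int_s^{t+\tau}h(r)\,dr$. Averaging over $s\in[t,t+\tau]$ and using the integral bound on $y$ from \eqref{h0} produces
\[
y(t+\tau) \le \frac{1}{\tau}\int_t^{t+\tau} y(s)\,ds + \int_t^{t+\tau} h(s)\,ds \le \frac{C(T_{\max})}{\tau} + C(T_{\max}) =: C(\tau,T_{\max}),
\]
which is exactly the claim, with the required continuous dependence on $\tau$ and $T_{\max}$.

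The main obstacle will be the careful bookkeeping in Step 1: one must ensure that the only $|v|$-term appearing on the right-hand side after all applications of Young's inequality is $|v|^{2\rho}$ (so it can be controlled by the dissipativity), while all the remaining exponents of $|Z|$ stay bounded by $2\rho$ so that $Z$ contributes only through $\|Z\|_{X_\alpha}$-norms. This is where the precise choice of $\alpha$ in \eqref{alpha} and the subcritical growth condition \eqref{rho} enter in an essential way.
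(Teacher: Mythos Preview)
Your proposal is correct and follows essentially the same route as the paper: test \eqref{equation_v} with $|v|^{\rho-1}v$, use the decomposition $F(v+\sigma Z)|v|^{\rho-1}v = F(v+\sigma Z)(v+\sigma Z)|v|^{\rho-1} - \sigma F(v+\sigma Z)Z|v|^{\rho-1}$ together with \eqref{F}--\eqref{F1} and Young's inequality to derive $y'\le h$, then integrate, average over $s\in(t,t+\tau)$, and close with the integral bound on $y$ coming from \eqref{h0}. The only cosmetic difference is that the paper bounds $\int_t^{t+\tau}y(s)\,\txtd s$ via the $L^2$ estimate of Lemma~\ref{lem:L2bound} applied at time $t$, whereas you take the full integral over $(0,T_{\max})$; both give the same $C(\tau,T_{\max})$.
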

    
\begin{proof}
        Fix $\tau > 0$ and consider $t\in (0,T_{\max}-\tau)$. \bao{We integrate \eqref{h0} over} $(t,t+\tau)$ to get
        \begin{equation}\label{h1_1}
            \int_t^{t+\tau}\|v(s)\|_{\LO{\rho+1}}^{\rho+1}~\txtd s \le \|v(t)\|_{\LO{2}}^2 + C\bra{\tau + \int_t^{t+\tau}\|Z(\theta_s\omega)\|_{X_\alpha}^{\rho+1}~\txtd s}.
        \end{equation}
        \bao{Thanks to the regularity of $v(t)$ in Lemma \ref{lem:local_v}, we can multiply} the equation \eqref{equation_v} by $|v|^{\rho-1}v$ to obtain 
        \begin{equation}\label{h2}
        \begin{aligned}
            \frac{1}{\rho+1}\frac{\txtd}{\txtd t}\|v\|_{\LO{\rho+1}}^{\rho+1} + \rho \int_{D}|v|^{\rho-1}\nabla v \cdot \mathcal E(x,t,\omega)\nabla v ~\txtd x\\
            \le \int_{D}F(v + \sigma Z)v|v|^{\rho-1}~\txtd x + \baonew{\int_{D} |f||v|^{\rho}\txtd x}.
        \end{aligned}
        \end{equation}
        We verify that the second term on the left-hand side is well-defined, which justifies the previous multiplication. Indeed, if $N\le 4$, thanks to the regularity of $v\in X_{\eta}$ for arbitrary $\eta\in [\alpha,1)$ obtained in Lemma \ref{lem:local_v}, we have, due to embedding theorems, $v\in \LO{q}$ for arbitrary $q\in [1,\infty)$, and therefore the justification of the second term on the left-hand side is straightforward. Now, if $N>4$, the boundedness of $\mathcal{E}$ and the regularity of $v$ give $|\nabla v| \in X_{\eta-1/2}$. An embedding result yields $|\nabla v|^2 \in \LO{N/(N-2) - \vartheta'}$  for any $0<\vartheta' \le 2/(N-2)$. It remains to show that $|v|^{\rho-1} \in \LO{N/2+\vartheta''}$ for some $\vartheta''>0$, which is achieved provided $\rho < N/(N-4)$. But this always holds thanks to the assumption of $\rho$ in \eqref{rho}. Now, by using \eqref{A1}, the second term on the left-hand side is non-negative. We further exploit the dissipativity of the nonlinearity $F$ to obtain
        \begin{align*}
            &\int_{D}F(v+\sigma Z)v|v|^{\rho-1}~\txtd x\\
            &= \int_{D}F(v+\sigma Z)(v+\sigma Z)|v|^{\rho-1}~\txtd x - \sigma \int_{D}F(v+\sigma Z)Z|v|^{\rho-1}~\txtd x\\
            &\le -C_0\int_{D}|v+\sigma Z|^{\rho+1}|v|^{\rho-1}~\txtd x + C_1\int_{D}|v|^{\rho-1}~\txtd x - \sigma \int_{D}F(v+\sigma Z)Z|v|^{\rho-1}~\txtd x\\
            &\le -C_0\int_{D}|v+\sigma Z|^{\rho+1}|v|^{\rho-1}~\txtd x + C\int_{D}|v|^{\rho+1}~\txtd x+ C +  C_F\int_{D}(|v+\sigma Z|^{\rho}+1)|Z||v|^{\rho-1}~\txtd x\\
            &\le -C_0\int_{D}|v|^{2\rho}~\txtd x + C_0\sigma^{\rho+1}\int_{D}|Z|^{\rho+1}|v|^{\rho-1} + C\int_{D}|v|^{\rho+1}~\txtd x+ C\\
            &\quad + CC_F\int_{D}|v|^{2\rho - 1}|Z|~\txtd x + CC_F\int_{D}|Z|^{\rho+1}|v|^{\rho-1}~\txtd x + C_F\int_{D}|Z||v|^{\rho-1}~\txtd x.
        \end{align*}
        We use Young's inequality to further estimate the terms on the right-hand side,
        \begin{align*}
            C_0\sigma^{\rho+1}\int_{D}|Z|^{\rho+1}|v|^{\rho-1}~\txtd x &\le \frac{C_0}{5}\int_{D}|v|^{2\rho}~\txtd x + C\int_{D}|Z|^{2\rho}~\txtd x,\\
            C\int_{D}|v|^{\rho+1}~\txtd x &\le \frac{C_0}{6}\int_D |v|^{2\rho}~\txtd x + C,\\
            C\int_{D}|v|^{2\rho-1}|Z|~\txtd x &\le \frac{C_0}{6}\int_{D}|v|^{2\rho}~\txtd x + C\int_{D}|Z|^{2\rho}~\txtd x,\\
            C\int_{D}|Z|^{\rho+1}|v|^{\rho-1}~\txtd x &\le \frac{C_0}{6}\int_{D}|v|^{2\rho}~\txtd x + C\int_{D}|Z|^{2\rho}~\txtd x,\\
            C\int_{D}|Z||v|^{\rho-1}~\txtd x &\le \frac{C_0}{6}\int_{D}|v|^{2\rho}~\txtd x + C\int_{D}|Z|^{2\rho}~\txtd x + C
        \end{align*}
        \baonew{and
        \begin{align*}
            \int_{D}|f||v|^{\rho}\txtd x \le \frac{C_0}{6}\int_{D}|v|^{2\rho}\txtd x + C\|f\|_{\LO{2}}^2 \le \frac{C_0}{6}\int_{D}|v|^{2\rho}\txtd x + C\|f\|_{X_\alpha}^2.
        \end{align*}}
       Inserting these estimates into \eqref{h2} it follows that
        \begin{equation}\label{h3}
            \frac{\txtd}{\txtd t}\|v\|_{\LO{\rho+1}}^{\rho+1} \le C\bra{\int_{D}|Z|^{2\rho}~\txtd x + 1 + \baonew{\|f\|_{X_\alpha}^2}} \le C(\|Z\|_{X_\alpha}^{2\rho} + 1).
        \end{equation}
        We now integrate \eqref{h3} over $(s,t+\tau)$ with $s\in (t,t+\tau)$ to obtain
        \begin{equation*}
            \|v(t+\tau)\|_{\LO{\rho+1}}^{\rho+1} \le \|v(s)\|_{\LO{\rho+1}}^{\rho+1} + C\int_{s}^{t+\tau}(\|Z(\theta_r \omega)\|_{X_\alpha}^{2\rho}+1)~\txtd r.
        \end{equation*}
        Integrating this inequality with respect to $s$ over $(t,t+\tau)$ and using \eqref{h1_1} leads
        \begin{equation}\label{h6}
        \begin{aligned}            &\tau\|v(t+\tau)\|_{\LO{\rho+1}}^{\rho+1} \le \int_t^{t+\tau}\|v(s)\|_{\LO{\rho+1}}^{\rho+1}~\txtd s + C\int_{t}^{t+\tau}\int_{s}^{t+\tau}(\|Z(\theta_r\omega)\|_{X_\alpha}^{2\rho}+1)~\txtd r \txtd s\\
            &\le \|v(t)\|_{\LO{2}}^2 + C\bra{\tau+\tau^2 + \tau\int_{t}^{t+\tau}(\|Z(\theta_s\omega)\|_{X_\alpha}^{2\rho} + \|Z(\theta_s\omega)\|_{X_\alpha}^{\rho+1})~\txtd s}\\
            &\le \|v(t)\|_{\LO{2}}^2 + C(\tau)\bra{1+\int_{t}^{t+\tau}\|Z(\theta_s\omega)\|_{X_\alpha}^{2\rho}~\txtd s}\\
            &\le C(T_{\max}) + C(\tau)\bra{1+\int_{0}^{T_{\max}}\|Z(\theta_s\omega)\|_{X_\alpha}^{2\rho}~\txtd s}
        \end{aligned}
        \end{equation}
        where we used that $\rho \ge 1$ and Young's inequality in the last step.
    \end{proof}

  
  For the global existence, we have to establish a bound for $v$ in $X_\eta$ for $\eta\in[\alpha,1)$. This is given in the next lemma. 


  
 \begin{lemma}\label{lem:bound_Xeta}
           Let the assumptions in Lemma \ref{lem:L2bound} hold.  Then {for every} $\omega\in\Omega$,  initial data {$v_0(\omega)\in X_\alpha$}  and any $\eta> 0$ such that $\eta+\alpha < 1$, we have for all $\tau\in (0,T_{\max}/2)$ 
        \begin{equation*}
            \|\bao{v(t+\tau)}\|_{X_\eta} \le C(t+\tau,T_{\max}) \quad \forall t\in (0,T_{\max}-\tau),
        \end{equation*}
        for some constant $C(t+\tau,T_{\max})$ depending continuously on $T_{\max}$ and such that $\lim_{t\to T_{\max}-\tau}C(t+\tau,T_{\max}) < +\infty$.
    \end{lemma}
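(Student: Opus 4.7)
The plan is to apply the variation of constants formula to $v$ starting from the intermediate time $\tau/2$, which is admissible since $\tau \in (0,T_{\max}/2)$, and then to exploit the smoothing properties of $U$ together with the $L^{\rho+1}$ bound from Lemma \ref{lem2} and the dual Sobolev embedding to close the estimate in $X_\eta$. Concretely, one writes
\[
v(t+\tau)=U(t+\tau,\tau/2,\omega)v(\tau/2)+\int_{\tau/2}^{t+\tau} U(t+\tau,s,\omega)\bigl[F(v(s)+\sigma Z(\theta_s\omega))+f\bigr]\,\txtd s,
\]
and estimates the two contributions separately. For the first term, Lemma \ref{lem:regularity_process} gives
\[
\|U(t+\tau,\tau/2,\omega)v(\tau/2)\|_{X_\eta}\leq \tilde{C}_\eta\, e^{-\lambda(t+\tau/2)}(t+\tau/2)^{-\eta}\|v(\tau/2)\|_X,
\]
which is bounded by a constant of the form $C(\tau,T_{\max})$ thanks to the $L^2$ bound from Lemma \ref{lem:L2bound}.

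For the nonlinear convolution, the key observation is that the growth assumption $|F(u)|\leq C_F(1+|u|^\rho)$ combined with $v(s)\in L^{\rho+1}(D)$ (Lemma \ref{lem2}) and $Z(\theta_s\omega)\in L^{\rho+1}(D)$ (Proposition \ref{cor:HighRegTemp} together with Lemma \ref{lem}) yields $F(v+\sigma Z)\in L^{(\rho+1)/\rho}(D)$. The first inequality in \eqref{alpha}, namely $\alpha\geq N(\rho-1)/(4(\rho+1))$, is exactly the Sobolev exponent that guarantees $X_\alpha\hookrightarrow L^{\rho+1}(D)$; by duality one obtains $L^{(\rho+1)/\rho}(D)\hookrightarrow X_{-\alpha}$, and therefore
\[
\|F(v(s)+\sigma Z(\theta_s\omega))\|_{X_{-\alpha}}\leq C\bigl(1+\|v(s)\|_{L^{\rho+1}}^\rho+\|Z(\theta_s\omega)\|_{L^{\rho+1}}^\rho\bigr).
\]
Invoking the extension of $U(t+\tau,s,\omega)$ to a bounded map from $X_{-\alpha}$ into $X_\eta$ with operator norm $\tilde{C}_{\eta,\alpha}(t+\tau-s)^{-(\eta+\alpha)}e^{-\lambda(t+\tau-s)}$, as in the second part of Remark \ref{remark}, one bounds the integral term by
\[
C\int_{\tau/2}^{t+\tau}\frac{e^{-\lambda(t+\tau-s)}}{(t+\tau-s)^{\eta+\alpha}}\bigl(1+\|v(s)\|_{L^{\rho+1}}^\rho+\|Z(\theta_s\omega)\|_{L^{\rho+1}}^\rho\bigr)\txtd s,
\]
which is finite since the hypothesis $\eta+\alpha<1$ renders the kernel integrable, and since Lemma \ref{lem2} together with the continuity of $s\mapsto Z(\theta_s\omega)$ in $L^{\rho+1}(D)$ on $[0,T_{\max}]$ give a uniform bound on the parenthesis. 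The contribution of $f$ is handled analogously using $f\in X_\alpha\hookrightarrow X$ and the smoothing estimate \eqref{eq:EvFamIneq}. Tracking the dependencies yields $\|v(t+\tau)\|_{X_\eta}\leq C(t+\tau,T_{\max})$ with the claimed continuity in $T_{\max}$ and finite limit as $t\to T_{\max}-\tau$, since the integral over $(\tau/2,T_{\max})$ is finite.

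The main technical obstacle is the dual Sobolev identification $L^{(\rho+1)/\rho}\hookrightarrow X_{-\alpha}$: one must verify that the abstract fractional power space $X_{-\alpha}$ built from the family $(A(\theta_t\omega))$ is indeed compatible with the Sobolev embedding, which relies on Remark \ref{rem:EquivNormFracPower} (so that $X_\alpha$ is independent of $(t,\omega)$) and on the second condition in \eqref{alpha} combined with standard interpolation. Once this embedding is in place, the remainder of the argument is a routine bootstrap of the variation of constants formula.
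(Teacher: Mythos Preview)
Your proof is correct and follows essentially the same route as the paper: both arguments restart the mild formulation at an intermediate positive time, control the homogeneous part via the $L^2$ bound of Lemma~\ref{lem:L2bound}, and handle the nonlinear convolution by combining the smoothing estimate $\|U(t+\tau,s,\omega)\|_{\cL(X_{-\alpha};X_\eta)}\le \tilde C_{\eta,\alpha}(t+\tau-s)^{-(\eta+\alpha)}e^{-\lambda(t+\tau-s)}$ with the dual embedding $L^{(\rho+1)/\rho}(D)\hookrightarrow X_{-\alpha}$ (which the paper writes out explicitly as \eqref{h4}) and the $L^{\rho+1}$ bound from Lemma~\ref{lem2}. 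The only cosmetic difference is that the paper restarts at time $t$ and integrates over $(t,t+\tau)$, whereas you restart at $\tau/2$ and integrate over $(\tau/2,t+\tau)$; both choices work since $\eta+\alpha<1$ makes the kernel integrable and Lemma~\ref{lem2} (applied with parameter $\tau/2$) supplies the needed uniform $L^{\rho+1}$ control on $[\tau/2,T_{\max})$.
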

    \begin{proof}
        We use the mild formulation for {$v$} and the regularization properties of the parabolic evolution family $U$.  We first show that 
        \begin{equation}\label{h4}
            \|F(\bao{v + \sigma Z})\|_{X_{-\alpha}}\le C(\bao{\|v\|_{\LO{\rho+1}}^{\rho} + \sigma^\rho\|Z\|_{\LO{\rho+1}}^{\rho}}+1).
        \end{equation}
        Indeed, for $\varphi \in X_{\alpha}\hookrightarrow L^{2N/(N-4\alpha)}(D)$ 
                we have
        \begin{align*}
            |\langle F(\bao{v+\sigma Z}), \varphi \rangle| &\le \int_{D}|F(\bao{v+\sigma Z})||\varphi|~\txtd x \le C_F\int_{D}(|\bao{v+\sigma Z}|^{\rho}+1)|\varphi|~\txtd x\\
            &\le C C_F\|\varphi\|_{\LO{2N/(N-4\alpha)}}\|\bao{v+\sigma Z}\|_{\LO{2N\rho/(N+4\alpha)}}^{\rho} + C_F\|\varphi\|_{\LO{1}}\\
            &\le C C_F\|\varphi\|_{X_\alpha}\bra{\|\bao{v+\sigma Z}\|_{\LO{2N\rho/(N+4\alpha)}}^{\rho}+1}.
        \end{align*}
        Thanks to the choice of $\alpha$ {specified in~\eqref{alpha}}, we have  $$\|\bao{v+\sigma Z}\|_{\LO{2N\rho/(N+4\alpha)}} \le C\|\bao{v+\sigma Z}\|_{\LO{\rho+1}} \bao{\le C\|v\|_{\LO{\rho+1}} + C\sigma\|Z\|_{\LO{\rho+1}}},$$ and therefore \eqref{h4} follows. Now, we can estimate the solution $v$ using the mild formulation.  For any $t\in (\tau,T_{\max}-\tau)$ we have according to~\eqref{h4} that 
        \begin{equation*} \label{bootstrap_est} \begin{aligned}
             &\|\bao{v}(t+\tau,\omega,\bao{v}_0(\omega))\|_{X_\eta}\\
             & = \|\bao{v}(\tau,\theta_t\omega,\bao{v}(t,\omega,\bao{v}_0))\|_{X_\eta}\\
             &\leq \|U(t+\tau,t,\omega)\bao{v}(t,\omega,\bao{v}_0)\|_{X_\eta} + \int_{t}^{t+\tau} \|U(t+\tau,s,\omega) \baonew{(F(v(s,\omega,v_0) + \sigma Z(\theta_s\omega)) + f)}\|_{X_\eta}~\txtd s \\
             & \leq \|U(t+\tau,t,\omega)\bao{v}(t,\omega,\bao{v}_0)\|_{X_\eta}\\
             &\qquad + \tilde{C}_{\eta,\alpha} \int_t^{t+\tau} (t+\tau-s)^{-\eta-\alpha}e^{-\lambda(t+\tau - s)} \|F(\bao{v(s,\omega,v_0)) + \sigma Z(\theta_s\omega)})\|_{X_{-\alpha}}~\txtd s\\
             &\qquad + \baonew{\tilde{C}_{\eta,\alpha}\int_{t}^{t+\tau}(t+\tau-s)^{-\eta + \alpha}{e^{-\lambda(t+\tau - s)}}\|f\|_{X_\alpha}\txtd s}\\
             & \leq \|U(t+\tau,t,\omega)\bao{v}(t,\omega,\bao{v}_0)\|_{X_\eta}\\
             &\qquad +\tilde{C}_{\eta,\alpha}C_F \int_{t}^{t+\tau} (t+\tau-s)^{-\eta-\alpha} e^{-\lambda(t+\tau - s)}\bra{\bao{\|v(s,\omega,v_0)\|^{\rho}_{L^{\rho+1}(D)} + \sigma^\rho\|Z(\theta_s\omega)\|_{\LO{\rho+1}}^{\rho}}+1} ~\txtd s\\
             &\qquad + \baonew{\tilde{C}_{\eta,\alpha}\|f\|_{X_\alpha}\int_{t}^{t+\tau}(t+\tau-s)^{-\eta+\alpha}{e^{-\lambda(t+\tau - s)}}\txtd s}.
          \end{aligned} \end{equation*}
          We recall that $U$ satisfies $U(t+\tau,t,\omega)= U(\tau,0,\theta_t\omega)$ as established in Theorem~\ref{lin:rds}.  Moreover, the constants $\tilde{C}_\eta=\tilde{C}_\eta(\omega)$, respectively  $\tilde{C}_{\eta,\alpha}=\tilde{C}_{\eta,\alpha}(\omega)$, in the computation below, are uniformly bounded with respect to $\omega\in\Omega$, due to Assumption~\ref{ass:OperatorFamily} \textit{iii)}.  
         To estimate the first term on the right-hand side we use \eqref{eq:EvFamIneq},
          \begin{equation*}
          \begin{aligned}
              \|U(t+\tau,t,\omega)\bao{v}(t,\omega,\bao{v}_0)\|_{X_\eta} &\le \tilde{C}_\eta \frac{e^{-\lambda(t+\tau-t)}}{(t+\tau-t)^{
              \eta
              }}\|\bao{v}(t,\omega,\bao{v}_0)\|_{\LO{2}} \le C_\eta(\tau) \|\bao{v}(t,\omega,\bao{v}_0)\|_{\LO{2}},
            \end{aligned}
          \end{equation*}
          and the second term is bounded by
          \begin{equation*}
          \begin{aligned}
              &\tilde{C}_{\eta,\alpha}C_F \sup_{s\in(\tau,T_{\max})}(\|Z(\theta_s\omega)\|_{\LO{\rho+1}}^{\rho}+\|\bao{v}(s,\omega,\bao{v}_0)\|^\rho_{L^{\rho+1}(D)})\int_t^{t+\tau}(t+\tau-s)^{-\eta-\alpha}e^{-\lambda(t+\tau-s)}~\txtd s \\
              &\quad + \tilde{C}_{\eta,\alpha}C_F \int_t^{t+\tau}(t+\tau-s)^{-\eta-\alpha}e^{-\lambda(t+\tau-s)}~\txtd s\\
              &\le C_\eta(\tau)\bra{\sup_{s\in(\tau,T_{\max})}(\|Z(\theta_s\omega)\|_{X_\alpha}^{\rho}+\|\bao{v}(s,\omega,\bao{v}_0)\|_{L^{\rho+1}(D)}) + 1}\\
              &\le C_\eta(\tau)\bra{C(t+\tau,T_{\max})+\sup_{s\in(\tau,T_{\max})}\|Z(\theta_s\omega)\|_{X_\alpha}^{\rho}}+1,
            \end{aligned}
          \end{equation*}
          where we used that $\eta + \alpha < 1$, the estimate for $v$ in Lemma \ref{lem2} and {the embedding $X_\alpha\hookrightarrow L^{\rho+1}(D)$}. Obviously, we can bound the last term
 by
 \[ \tilde{C}_{\eta,\alpha}\int_t^{t+\tau} (t+\tau -s)^{-\eta+\alpha} e^{-\lambda(t+\tau-s)}~\txtd s \|f\|_{X_\alpha}\leq  C_{\eta,\alpha}(\tau) C(t+\tau, T_{\text{max}})\|f\|_{X_\alpha}. \]
    \end{proof}
    \begin{theorem}
         Let the assumptions in Lemma \ref{lem:L2bound} hold. Then for every $\omega\in\Omega$ and initial data \bao{$v_0(\omega) = u_0(\omega) + \sigma Z(\omega)\in X_\alpha$},  Problem \eqref{equation_v} has a unique global solution in \bao{$C([0,\infty);X_\alpha)$. Consequently, for every $\omega\in\Omega$ and initial data $u_0\in X_\alpha$ Problem \eqref{specific_equation} has a unique global solution in $C([0,\infty);X_\alpha)$.}
    \end{theorem}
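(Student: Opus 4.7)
The plan is to derive global existence by contradiction, using the a priori bounds built up through Lemmas~\ref{lem:L2bound}, \ref{lem2}, and~\ref{lem:bound_Xeta}. First, I would fix $\omega\in\Omega$ and invoke Proposition~\ref{RDE_local_existence} (equivalently Lemma~\ref{lem:local_v}) to obtain a unique local mild solution $v(\cdot,\omega,v_0(\omega))\in C([0,T_{\max});X_\alpha)$ on a maximal interval of existence $(0,T_{\max})$. By the construction in Theorem~\ref{local:existence}, the blow-up alternative holds: either $T_{\max}=\infty$, or $\limsup_{t\to T_{\max}}\|v(t)\|_{X_\alpha}=\infty$.

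Suppose, for contradiction, that $T_{\max}<\infty$. The key point is that the constants $C(T_{\max})$ in Lemma~\ref{lem:L2bound} and $C(\tau,T_{\max})$ in Lemma~\ref{lem2} depend only on $T_{\max}$ (which is finite), on $\|v_0(\omega)\|_{X_\alpha}$, on $\|f\|_{X_\alpha}$, and on $\int_0^{T_{\max}}\|Z(\theta_s\omega)\|_{X_\alpha}^{2\rho}\,\txtd s$. The last quantity is finite because $Z(\cdot,\omega)\in C([0,T_{\max}];X_\alpha)$ by Proposition~\ref{cor:HighRegTemp} together with the condition $\alpha<1/2$ coming from~\eqref{alpha}. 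Fixing any $\tau\in(0,T_{\max}/2)$, I would then apply Lemma~\ref{lem:bound_Xeta} with $\eta=\alpha$ (this is admissible since $2\alpha<1$ thanks to~\eqref{alpha}) to conclude
\[
\sup_{t\in(\tau,T_{\max})}\|v(t)\|_{X_\alpha}\leq \sup_{t\in(\tau,T_{\max})}C(t,T_{\max})<\infty,
\]
since $\lim_{t\to T_{\max}}C(t,T_{\max})<\infty$ by the statement of that lemma. This contradicts the blow-up alternative, so necessarily $T_{\max}=\infty$. Uniqueness on $[0,\infty)$ is inherited from the local uniqueness provided by Theorem~\ref{local:existence}.

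Finally, for~\eqref{specific_equation}, I would set $u(t,\omega):=v(t,\omega)+\sigma Z(\theta_t\omega)$. Since $v\in C([0,\infty);X_\alpha)$ from the previous step, and $t\mapsto Z(\theta_t\omega)$ is continuous with values in $X_\alpha$ (by Proposition~\ref{cor:HighRegTemp} and the choice $\alpha<1/2$), the sum $u$ lies in $C([0,\infty);X_\alpha)$. By the pathwise construction \eqref{eq:OUprocess}--\eqref{equation_v}, $u$ satisfies the pathwise mild formulation of~\eqref{specific_equation} with initial datum $u_0(\omega)=v_0(\omega)-\sigma Z(\omega)\in X_\alpha$, and uniqueness for $u$ follows at once from uniqueness for $v$.

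I do not expect a genuine obstacle here: all the heavy lifting is already done in Lemmas~\ref{lem:L2bound}--\ref{lem:bound_Xeta}, the first providing the $L^2$-bound via the dissipativity assumption~\eqref{F1}, the second upgrading it to an $L^{\rho+1}$-bound by multiplying the equation by $|v|^{\rho-1}v$, and the third using the parabolic smoothing of the evolution family $U$ in a bootstrap to reach $X_\eta$ for any $\eta$ with $\eta+\alpha<1$. The only mildly delicate point is checking that the constants in these lemmas really do remain finite as $t\to T_{\max}^{-}$, which rests on the continuity of $t\mapsto Z(\theta_t\omega)$ in $X_\alpha$ on the closed interval $[0,T_{\max}]$ and hence on the boundedness of the integrals $\int_0^{T_{\max}}\|Z(\theta_s\omega)\|_{X_\alpha}^{k}\,\txtd s$ for the relevant exponents $k\in\{\rho+1,2\rho\}$.
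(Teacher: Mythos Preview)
Your proposal is correct and follows essentially the same approach as the paper: both apply Lemma~\ref{lem:bound_Xeta} with the choice $\eta=\alpha$ (admissible since $2\alpha<1$) to rule out finite-time blow-up in $X_\alpha$, and then transfer global existence to~\eqref{specific_equation} via $u=v+\sigma Z(\theta_\cdot\omega)$. The paper's proof is a terse two-sentence summary of precisely this argument, whereas you spell out the blow-up alternative and the verification that the constants stay finite as $t\to T_{\max}^-$.
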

    \begin{proof}
        \bao{From Lemma \ref{lem:bound_Xeta}, by choosing $\eta = \alpha$, we conclude the global solvability of \eqref{equation_v}. The global existence of \eqref{specific_equation} follows immediately due to the relation $u(t,\omega) = v(t,\omega) + \sigma Z(\theta_t\omega)$.}
    \end{proof}

    \subsection{Random attractors}\label{sec:attr}

    Thanks to the global existence of solutions proved in the previous section, if the assumptions in Lemma \ref{lem:L2bound} hold,  \eqref{specific_equation} gives rise to a continuous random dynamical system $\phi$ on $X_\alpha$ defined by
        \begin{equation*}
        \begin{aligned}  
            \phi: \R_+\times \Omega\times X_\alpha &\to X_\alpha\\
            (t,\omega,u_0(\omega)) &\mapsto \phi(t,\omega,u_0(\omega)):= u(t,\omega,u_0(\omega)) = v(t,\omega,v_0(\omega)) + \sigma Z(\theta_t\omega),
        \end{aligned}
        \end{equation*}
        where $u_0(\omega)\in X_\alpha$ and $v_0(\omega) = u_0(\omega) - \sigma Z(\omega)$. \\

We now investigate the long-time behavior of $\phi$ establishing the existence of a random attractor. Firstly, in order to obtain an absorbing set for $\phi$ in $L^2(D)$, respectively $L^{\rho+1}(D)$, we need the following result regarding the temperedness of certain random variables.

            \begin{lemma}\label{lem:TempInt}{\em (\cite[Lemma 3.7]{Bessaih2014}).}
                Let $Y$ be a tempered random variable and $\gamma>0$. Then 
                \begin{align*}
                   \omega\mapsto \int_{-\infty}^0 e^{\gamma t} Y(\theta_t\omega)~\txtd t
                \end{align*}
                is tempered.
            \end{lemma}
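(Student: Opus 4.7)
The plan is to exploit the equivalent characterization of temperedness recorded in Remark~\ref{rem:EquivTemp}: a positive random variable $Y$ is tempered iff for every $\varepsilon>0$ there exists a finite random variable $C_\varepsilon(\omega)>0$ such that
$$Y(\theta_r\omega) \le C_\varepsilon(\omega)\, e^{\varepsilon|r|} \quad \text{for all } r\in\R.$$
First I would verify that $I(\omega):=\int_{-\infty}^0 e^{\gamma t} Y(\theta_t\omega)\,dt$ is a well-defined random variable: choosing $\varepsilon\in(0,\gamma)$, the integrand is dominated by $C_\varepsilon(\omega)\,e^{(\gamma-\varepsilon)t}$ on $(-\infty,0]$, which is integrable, and joint measurability together with Fubini's theorem yields that $I$ is measurable.

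Next, to check that $I$ is itself tempered, I would fix $\delta>0$ and pick $\varepsilon\in\bigl(0,\min\{\delta,\gamma\}\bigr)$. Using the group property $\theta_t\circ\theta_{-s}=\theta_{t-s}$ and substituting into the definition,
$$I(\theta_{-s}\omega) \;=\; \int_{-\infty}^0 e^{\gamma t}\, Y(\theta_{t-s}\omega)\,dt.$$
Applying the temperedness bound for $Y$ at the point $r = t-s$ together with the triangle inequality $|t-s|\le|t|+|s|$ and recalling that $|t|=-t$ for $t\le 0$, one obtains
$$I(\theta_{-s}\omega) \;\le\; C_\varepsilon(\omega)\, e^{\varepsilon|s|} \int_{-\infty}^0 e^{(\gamma-\varepsilon)t}\,dt \;=\; \frac{C_\varepsilon(\omega)}{\gamma-\varepsilon}\, e^{\varepsilon|s|}.$$

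Multiplying by $e^{-\delta|s|}$ and letting $|s|\to\infty$ gives convergence to zero because $\varepsilon-\delta<0$, so by Remark~\ref{rem:EquivTemp} the random variable $I$ is tempered (this handles the limits $s\to+\infty$ and $s\to-\infty$ simultaneously since the bound is in terms of $|s|$). I do not anticipate any real obstacle here; the key structural point is simply that the exponentially decaying weight $e^{\gamma t}$ absorbs the sub-exponential growth of $Y$ in the $t$-variable, while the $|s|$-dependence that arises from the shift contributes only a sub-exponential factor, which is what temperedness of $I$ requires.
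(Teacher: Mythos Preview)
The paper does not provide its own proof of this lemma; it is simply quoted from \cite[Lemma~3.7]{Bessaih2014} without argument. Your proof is correct and is the standard elementary one: the temperedness of $Y$ furnishes the pointwise bound $Y(\theta_r\omega)\le C_\varepsilon(\omega)e^{\varepsilon|r|}$, and splitting $|t-s|\le|t|+|s|$ with $\varepsilon<\min\{\delta,\gamma\}$ lets the weight $e^{\gamma t}$ absorb the $|t|$-contribution while only a sub-exponential factor in $|s|$ survives.
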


         We further recall that $\mathcal D(\bao{X_\alpha})$ denotes the collection of tempered random sets in $X_\alpha$. 

        \begin{lemma}\label{lem1}
         Let the assumptions in Lemma \ref{lem:L2bound} hold.
        Then there exists an absorbing set in $L^2(D)$ for the random dynamical system $\phi$. 
    \end{lemma}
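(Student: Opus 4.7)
The plan is to convert the differential inequality already derived inside the proof of Lemma \ref{lem:L2bound} into a pullback estimate on $v$, show that the contribution of the initial data is killed by the exponential decay once $B$ is tempered in $X_\alpha$, and then transfer the resulting bound to $\phi = v + \sigma Z$ in order to read off an absorbing set in $L^2(D)$.

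Concretely, recall that in the proof of Lemma \ref{lem:L2bound} the estimate
\begin{equation*}
\frac{\txtd}{\txtd t}\|v\|_{L^2(D)}^2 + \delta\lambda_1\|v\|_{L^2(D)}^2 \le C\bra{\|Z(\theta_t\omega)\|_{X_\alpha}^{\rho+1}+1}
\end{equation*}
was established. Gronwall's inequality and the substitution $\omega \mapsto \theta_{-t}\omega$ together with the change of variables $s\mapsto s-t$ give
\begin{equation*}
\|v(t,\theta_{-t}\omega, v_0(\theta_{-t}\omega))\|_{L^2(D)}^2 \le e^{-\delta\lambda_1 t}\|v_0(\theta_{-t}\omega)\|_{L^2(D)}^2 + C\int_{-t}^{0} e^{\delta\lambda_1 s}\bra{\|Z(\theta_s\omega)\|_{X_\alpha}^{\rho+1}+1}\,\txtd s.
\end{equation*}
For any $B\in\mathcal D(X_\alpha)$ and $u_0(\theta_{-t}\omega)\in B(\theta_{-t}\omega)$, the identity $v_0 = u_0 - \sigma Z(\theta_{-t}\omega)$ combined with the embedding $X_\alpha\hookrightarrow L^2(D)$, the temperedness of $B$ in $X_\alpha$, and the temperedness of $Z$ in $X$ (Lemma \ref{lem:OUTemp}) show, via Remark \ref{rem:EquivTemp}, that $e^{-\delta\lambda_1 t}\|v_0(\theta_{-t}\omega)\|_{L^2(D)}^2 \to 0$ as $t\to\infty$. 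The remaining integral is bounded uniformly in $t$ by
\begin{equation*}
R_v^2(\omega) := C\int_{-\infty}^{0} e^{\delta\lambda_1 s}\|Z(\theta_s\omega)\|_{X_\alpha}^{\rho+1}\,\txtd s + \frac{C}{\delta\lambda_1}.
\end{equation*}

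Using the cocycle decomposition $\phi(t,\theta_{-t}\omega, u_0) = v(t,\theta_{-t}\omega, v_0) + \sigma Z(\omega)$ and the elementary inequality $(a+b)^2\le 2a^2+2b^2$, the natural candidate for the absorbing set is
\begin{equation*}
K(\omega) := \left\{ u\in L^2(D) : \|u\|_{L^2(D)}^2 \le R(\omega)^2\right\}, \qquad R(\omega)^2 := 2R_v^2(\omega) + 2\sigma^2\|Z(\omega)\|_{L^2(D)}^2 + 2.
\end{equation*}
By construction there exists $t_B(\omega)>0$ such that $\phi(t,\theta_{-t}\omega, B(\theta_{-t}\omega)) \subset K(\omega)$ for all $t\ge t_B(\omega)$.

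The last point to verify is that $K\in \mathcal D(L^2(D))$, and this is where the machinery of Section \ref{ou} enters. Proposition \ref{cor:HighRegTemp} yields that $\|Z(\cdot)\|_{X_\alpha}$ is tempered for the choice of $\alpha$ in \eqref{alpha}, hence by Remark \ref{rem:temp} so is $\|Z(\cdot)\|_{X_\alpha}^{\rho+1}$; Lemma \ref{lem:TempInt} then ensures the temperedness of $R_v^2(\omega)$. The term $\|Z(\omega)\|_{L^2(D)}^2$ is tempered by Lemma \ref{lem:OUTemp} and Remark \ref{rem:temp}. Combining these inputs, $R(\omega)^2$ is tempered and thus $\{K(\omega)\}_{\omega\in\Omega}\in\mathcal D(L^2(D))$. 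The only genuine subtlety is the uniform control of the initial-data term over all $u_0 \in B(\theta_{-t}\omega)$: this is precisely where the tempered class $\mathcal D(X_\alpha)$ is used to guarantee subexponential growth of $\sup_{u_0\in B(\theta_{-t}\omega)}\|u_0\|_{X_\alpha}$, which is then dominated by the exponential factor $e^{-\delta\lambda_1 t}$.
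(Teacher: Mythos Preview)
Your proof is correct and follows essentially the same approach as the paper: both start from the differential inequality in \eqref{h0}, apply Gronwall, replace $\omega$ by $\theta_{-t}\omega$, split $v_0$ into $u_0$ and $\sigma Z$ contributions, use the temperedness of $B\in\mathcal D(X_\alpha)$ and of $Z$ to kill the initial-data term, and then verify temperedness of the absorbing radius via Lemma~\ref{lem:TempInt} and Proposition~\ref{cor:HighRegTemp}. The only cosmetic differences are your explicit ``$+2$'' in the radius and the way you phrase the uniform control over $B(\theta_{-t}\omega)$; the underlying argument is identical.
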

    \begin{proof}
        In the estimate \eqref{L2_estimate} in Lemma \ref{lem:L2bound} we can replace $\omega$ by $\theta_{-t}\omega$ and obtain
        \begin{equation}\label{h1}
        \begin{aligned}
            &\|v(t,\tto,v_0(\tto))\|_{\LO{2}}^2\\
            &\le e^{-\delta\lambda_1 t}\|v_0(\tto)\|_{L^2(D)}^2 + C\bra{\int_0^te^{-\delta\lambda_1(t-s)}\|Z(\theta_{s-t}\omega)\|_{\LO{\rho+1}}^{\rho+1}~\txtd s + 1}\\
            &\le e^{-\delta\lambda_1 t}\|v_0(\tto)\|_{L^2(D)}^2 + C\bra{\int_{-\infty}^0e^{\delta\lambda_1\tau}\|Z(\theta_{\tau}\omega)\|_{X_\alpha}^{\rho+1}~\txtd\tau + 1}
        \end{aligned}
        \end{equation}
        where we used the change of variables $\tau = s-t$ and the embedding $X_\alpha \hookrightarrow L^{\rho+1}(D)$ in the last step.

        If $\{B(\omega)\}_{\omega\in\Omega} \in \cD(\bao{X_\alpha})$ 
        and $u_0(\theta_{-t}\omega) \in B(\theta_{-t}\omega)$ we therefore have
        \begin{align*}
            &\|\phi(t,\theta_{-t}\omega,u_0(\theta_{-t}\omega))\|_{L^2(D)}^2 \le 2\|v(t,\theta_{-t}\omega,v_0(\theta_{-t}\omega))\|_{L^2(D)}^2 + 2\sigma^2\|Z(\omega)\|_{L^2(D)}^2\\
            &\le2 e^{-\delta\lambda_1 t}\|v_0(\theta_{-t}\omega)\|_{L^2(D)}^2 + C+C\int_{-\infty}^0e^{\delta\lambda_1\tau}\|Z(\theta_\tau\omega)\|_{X_{\alpha}}^{\rho+1}~\txtd\tau + 2\sigma^2\|Z(\omega)\|_{L^2(D)}^2\\
            &\le 2e^{-\delta\lambda_1t}\|u_0(\theta_{-t}\omega)\|_{L^2(D)}^2 + 2\sigma^2 e^{-\delta\lambda_1t}\|Z(\theta_{-t}\omega)\|_{L^2(D)}^2\\
            &\quad + C\bra{1+\int_{-\infty}^0e^{\delta\lambda_1\tau}\|Z(\theta_\tau\omega)\|_{X_\alpha}^{\rho+1}~\txtd\tau + \|Z(\omega)\|_{L^2(D)}^2}.
        \end{align*}
        Taking the $\limsup$ we obtain
        \begin{align*}
           \limsup\limits_{t\to \infty} \|\phi(t,\theta_{-t}\omega,u_0(\theta_{-t}\omega))\|_{L^2(D)} \le r_2(\omega),
        \end{align*}
        where 
        \begin{equation*}
            r_2(\omega) := \bra{2C\bra{1+\int_{-\infty}^0e^{\delta\lambda_1\tau}\|Z(\theta_\tau\omega)\|_{X_\alpha}^{\rho+1}~\txtd\tau + \|Z(\omega)\|_{L^2(D)}^2}}^{1/2}.
        \end{equation*}
        Here, we used that $u_0(\theta_{-t}\omega)\in B(\theta_{-t}\omega)$ and the temperedness of $\|Z(\omega)\|_{X_\alpha}$ which was established in Proposition \ref{cor:HighRegTemp} and implies the temperedness of $\|Z(\omega)\|_{L^2(D)}$. 
Therefore, the temperedness of $r_2$ follows from Lemma \ref{lem:TempInt} and Remark \ref{rem:temp}. 
        We conclude that there exists $T_2(\omega)>0$ such that for all $t\ge T_2(\omega)$ 
        \begin{equation*}
            \|\phi(t,\theta_{-t}\omega,u_0(\theta_{-t}\omega))\|_{L^2(D)} \le r_2(\omega) \quad \forall t\ge T_2(\omega),
        \end{equation*}
        which completes the proof of this lemma.
    \end{proof}

    \begin{lemma}\label{lem2_1}
     Let the assumptions in Lemma \ref{lem:L2bound} hold.
        Then there exists an absorbing set in $L^{\rho+1}(D)$ for the random dynamical system $\phi$.
    \end{lemma}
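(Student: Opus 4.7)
The plan is to follow the same pullback-in-time strategy as in Lemma~\ref{lem1}, but now applied to the $L^{\rho+1}(D)$-norm of $v$. The key observation is that the two differential inequalities already derived in the proofs of Lemma~\ref{lem:L2bound} and Lemma~\ref{lem2}, namely \eqref{h0} (which gives both dissipativity in $L^2$ and an integrated bound in $L^{\rho+1}$) and \eqref{h3} (pointwise in time control of the $L^{\rho+1}$-norm by $\|Z\|_{X_\alpha}^{2\rho}$), together with an appropriate uniform-Gronwall-type argument over a unit time window, yield a bound for $\|v(t+1)\|_{L^{\rho+1}}^{\rho+1}$ solely in terms of $\|v(t)\|_{L^2}^2$ and an integral of $Z$ over $[t,t+1]$. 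Indeed, integrating \eqref{h3} on $(s,t+1)$ for $s\in(t,t+1)$ and then integrating in $s$, and combining with the integrated version of \eqref{h0}, gives schematically
\begin{equation*}
\|v(t+1)\|_{L^{\rho+1}}^{\rho+1} \le \|v(t)\|_{L^2}^2 + C\int_t^{t+1}\bigl(\|Z(\theta_s\omega)\|_{X_\alpha}^{2\rho} + \|Z(\theta_s\omega)\|_{X_\alpha}^{\rho+1} + 1\bigr)\,\txtd s.
\end{equation*}

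Next I would perform the pullback substitution $\omega \mapsto \theta_{-(t+1)}\omega$ and use the change of variables $\tau = s - t - 1$ to rewrite the integral on the right as an integral over the fixed window $[-1,0]$ involving $\|Z(\theta_\tau\omega)\|_{X_\alpha}$. This yields
\begin{equation*}
\|v(t+1,\theta_{-(t+1)}\omega,v_0(\theta_{-(t+1)}\omega))\|_{L^{\rho+1}}^{\rho+1} \le \|v(t,\theta_{-(t+1)}\omega,v_0(\theta_{-(t+1)}\omega))\|_{L^2}^2 + C\int_{-1}^{0}\bigl(\|Z(\theta_\tau\omega)\|_{X_\alpha}^{2\rho} + \|Z(\theta_\tau\omega)\|_{X_\alpha}^{\rho+1} + 1\bigr)\txtd\tau.
\end{equation*}
To control the first term on the right-hand side I would invoke \eqref{h1} (shifted by one time unit via the identity $\theta_{-(t+1)} = \theta_{-t}\circ\theta_{-1}$) to obtain an $L^2$-estimate whose leading exponentially decaying part vanishes as $t\to\infty$ whenever $u_0(\theta_{-t}\omega)$ lies in a tempered set, and whose remaining part is tempered by Lemma~\ref{lem:TempInt} applied with $Y = \|Z(\cdot)\|_{X_\alpha}^{\rho+1}$.

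Taking $\limsup_{t\to\infty}$ of the above inequality and using the triangle inequality $\|u\|_{L^{\rho+1}} \le \|v\|_{L^{\rho+1}} + \sigma\|Z\|_{L^{\rho+1}}$, together with the temperedness of $\|Z(\omega)\|_{L^{\rho+1}}$ obtained from Lemma~\ref{lem} via the Sobolev embedding $X_\alpha \hookrightarrow L^{\rho+1}(D)$, produces a tempered random radius $r_{\rho+1}(\omega)$ such that
\begin{equation*}
\limsup_{t\to\infty}\|\phi(t,\theta_{-t}\omega,u_0(\theta_{-t}\omega))\|_{L^{\rho+1}(D)} \le r_{\rho+1}(\omega),
\end{equation*}
for any $\{B(\omega)\}_{\omega\in\Omega}\in\mathcal{D}(X_\alpha)$ and $u_0(\theta_{-t}\omega)\in B(\theta_{-t}\omega)$. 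The ball of radius $r_{\rho+1}(\omega)$ in $L^{\rho+1}(D)$ then serves as the sought absorbing set.

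The main technical obstacle I anticipate is the careful handling of the windowed integration (the uniform-Gronwall step) so that the resulting right-hand side is a stationary quantity in $\omega$ and does not reintroduce a $T_{\max}$-dependence of the type that appeared in \eqref{h6}; choosing a fixed window of length one and integrating in $s$ before the pullback substitution is the cleanest way I see to achieve this. A secondary point to verify is that the Lebesgue integrals of $\|Z(\theta_\tau\omega)\|_{X_\alpha}^{q}$ for $q = \rho+1, 2\rho$ over $[-1,0]$ define almost surely finite, and in fact tempered, random variables; this follows from the temperedness of $Z$ in $X_\alpha$ established in Proposition~\ref{cor:HighRegTemp}, together with Remark~\ref{rem:temp} and Lemma~\ref{lem:TempInt}.
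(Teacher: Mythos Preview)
Your proposal is correct and follows essentially the same route as the paper: you re-derive the windowed estimate that is exactly \eqref{h6} with $\tau=1$ (the paper simply quotes \eqref{h6} instead of re-doing the uniform-Gronwall step from \eqref{h0} and \eqref{h3}), then perform the pullback $\omega\mapsto\theta_{-(t+1)}\omega$, insert the $L^2$ bound \eqref{h1}, and conclude via temperedness of $Z$ in $X_\alpha$. The only cosmetic difference is that the paper absorbs the integral $\int_{-1}^{0}\|Z(\theta_\tau\omega)\|_{X_\alpha}^{2\rho}\,\txtd\tau$ into the weighted integral $\int_{-\infty}^{0}e^{\delta\lambda_1\tau}\|Z(\theta_\tau\omega)\|_{X_\alpha}^{2\rho}\,\txtd\tau$ so that Lemma~\ref{lem:TempInt} applies directly, whereas you keep the window $[-1,0]$; either choice yields a tempered radius.
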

    \begin{proof}
    Taking $\tau=1$ in \eqref{h6} and using \eqref{h1} we have
    \begin{equation*}
    \begin{aligned}
        &\|v(t+1,\theta_{-t-1}\omega,v_0(\theta_{-t-1}\omega))\|_{\LO{\rho+1}}^{\rho+1}\\
        &\le \|v(t,\theta_{-t-1}\omega,v_0(\theta_{-t-1}\omega))\|_{\LO{2}}^2 + C\bra{1 + \int_{t}^{t+1}\|Z(\theta_{s-t-1}\omega)\|_{X_\alpha}^{2\rho}~\txtd s}\\
        &\le e^{-\delta\lambda_1 t}\|v_0(\theta_{-t-1}\omega)\|_{L^2(D)}^2 + C\bra{\int_{-\infty}^0e^{\delta\lambda_1\tau}\|Z(\theta_{\tau}\omega)\|_{X_\alpha}^{\rho+1}~\txtd \tau + 1}\\
        &\quad + C\bra{1+e^{\delta\lambda_1}\int_{t}^{t+1}e^{-\delta\lambda_1(t+1-s)}\|Z(\theta_{s-t-1}\omega)\|_{X_\alpha}^{2\rho}~\txtd s}
        \\
        &\le e^{\delta\lambda_1}e^{-\delta\lambda_1 (t+1)}\|v_0(\theta_{-t-1}\omega)\|_{L^2(D)}^2 + C\bra{1+\int_{-\infty}^0e^{\delta\lambda_1\tau}\|Z(\theta_\tau\omega)\|_{X_\alpha}^{2\rho}~\txtd\tau}.
    \end{aligned}
    \end{equation*}
    Hence, for $\{B(\omega)\}_{\omega\in\Omega} \in \cD(\bao{X_\alpha})$ and $u_0(\theta_{-t}\omega) \in B(\theta_{-t}\omega)$ we have for $t\ge 1$
        \begin{align*}
            &\|\phi(t,\theta_{-t}\omega,u_0(\theta_{-t}\omega))\|_{L^{\rho+1}(D)}^{\rho+1} \le C(\rho,\sigma)\bra{\|v(t,\theta_{-t}\omega,v_0(\theta_{-t}\omega))\|_{L^{\rho+1}(D)}^{\rho+1} + \|Z(\omega)\|_{L^{\rho+1}(D)}^{\rho+1}}\\
            &\le C(\rho,\sigma)e^{\delta\lambda_1}e^{-\delta\lambda_1 t}\|v_0(\theta_{-t}\omega)\|_{L^2(D)}^2 + C\bra{1+\int_{-\infty}^0e^{\delta\lambda_1\tau}\|Z(\theta_\tau\omega)\|_{X_\alpha}^{2\rho}\txtd\tau} + \|Z(\omega)\|_{\LO{\rho+1}}^{\rho+1}\\
            &\le C(\rho,\sigma)e^{\delta\lambda_1}\bra{2e^{-\delta\lambda_1t}\|u_0(\theta_{-t}\omega)\|_{L^2(D)}^2 + 2\sigma^2 e^{-\delta\lambda_1t}\|Z(\theta_{-t}\omega)\|_{L^2(D)}^2}\\
            &\quad + C\bra{1+\int_{-\infty}^0e^{\delta\lambda_1\tau}\|Z(\theta_\tau\omega)\|_{X_\alpha}^{2\rho}\txtd\tau + \|Z(\omega)\|_{\LO{\rho+1}}^{\rho+1}}.
        \end{align*}
        Taking the $\limsup$  we obtain
        \begin{align*}
           \limsup\limits_{t\to \infty} \|\phi(t,\theta_{-t}\omega,u_0(\theta_{-t}\omega))\|_{L^{\rho+1}(D)} \le r_\rho(\omega),
        \end{align*}
        where
    \begin{equation*}
        r_{\rho}(\omega):= C^{1/(\rho+1)}\bra{1+\int_{-\infty}^0e^{\delta\lambda_1\tau}\|Z(\theta_\tau\omega)\|_{X_\alpha}^{2\rho}\txtd\tau + \|Z(\omega)\|_{\LO{\rho+1}}^{\rho+1}}^{1/(\rho+1)}.
    \end{equation*}
        Here, we used that $u_0(\theta_{-t}\omega)\in B(\theta_{-t}\omega)$ and the temperedness of $\|Z(\omega)\|_{X_\alpha}$ which implies the temperedness of  $\|Z(\omega)\|_{L^{\rho+1}(D)}$. 
    Similar calculations as for $r_2$ in the proof of Lemma \ref{lem1} show that $r_\rho$ is tempered, i.e. there exists $T_{\rho}(\omega)>0$ such that
    \begin{equation}\label{h6_0}
        \|\phi(t,\theta_{-t}\omega,u_0(\theta_{-t}\omega))\|_{\LO{\rho+1}} \le r_{\rho}(\omega) \quad \forall t\ge T_{\rho}(\omega),
    \end{equation}
    which shows that an absorbing set for $\phi$ in $\LO{\rho+1}$ exists.   
    \end{proof}
    
To prove compactness and conclude the existence of a random attractor, we further establish the existence of an absorbing set for $\phi$ \bao{in $X_\eta$ for some $\eta > \alpha$}.

    \begin{theorem}\label{thm:attractor}
   Let the assumptions in Lemma \ref{lem:L2bound} hold.  Then the random dynamical system $\phi$ generated by \eqref{RDE} possesses a \bao{unique} random attractor $\{\mathcal{A}(\omega)\}_{\omega\in\Omega}$ in $X_{\alpha}$. 
    \end{theorem}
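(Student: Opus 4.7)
The plan is to apply Theorem \ref{thm:attractor_general}, so it suffices to produce a random compact tempered absorbing set in $X_\alpha$ for the cocycle $\phi$. Since $A(\theta_t\omega)^{-1}$ is compact by Assumption \ref{ass:OperatorFamily} v), the embedding $X_\eta \hookrightarrow X_\alpha$ is compact whenever $\eta>\alpha$. I will therefore fix $\eta \in (\alpha,\tfrac12)$ with $\eta+\alpha<1$ (such $\eta$ exists because $\alpha<\tfrac12$ by \eqref{alpha}) and construct a tempered absorbing set in $X_\eta$; its closure in $X_\alpha$ will then be compact and tempered and supply the hypothesis of Theorem \ref{thm:attractor_general}.

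For the construction I combine the $L^{\rho+1}$-absorbing property from Lemma \ref{lem2_1} with a pullback version of the bootstrap used in Lemma \ref{lem:bound_Xeta}. Writing $v = u - \sigma Z$, the cocycle property together with \eqref{h6_0} applied on the shifted fibre $\theta_\tau\omega$ implies that for every $\tau\in[-1,0]$ and every $t\geq 1 + T_\rho(\theta_\tau\omega)$,
\begin{align*}
\|v(t+\tau,\theta_{-t}\omega,v_0(\theta_{-t}\omega))\|_{\LO{\rho+1}} \leq r_\rho(\theta_\tau\omega) + \sigma\|Z(\theta_\tau\omega)\|_{\LO{\rho+1}},
\end{align*}
so that $\|v(s,\theta_{-t}\omega,v_0)\|_{\LO{\rho+1}}$ is uniformly controlled for $s\in[t-1,t]$ once $t$ is large enough. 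I then apply the mild formula on the window $[t-1,t]$,
\begin{align*}
v(t,\theta_{-t}\omega,v_0) = U(t,t-1,\theta_{-t}\omega)\,v(t-1,\theta_{-t}\omega,v_0) + \int_{t-1}^{t} U(t,s,\theta_{-t}\omega)\bigl(F(v(s)+\sigma Z(\theta_{s-t}\omega)) + f\bigr)\,\txtd s,
\end{align*}
estimate the first term via \eqref{eq:EvFamIneq}, and estimate the nonlinear term exactly as in the proof of Lemma \ref{lem:bound_Xeta}, using \eqref{h4}, the embedding $X_\alpha \hookrightarrow \LO{\rho+1}$, and the integrability of the singular kernel $(t-s)^{-\eta-\alpha}$ which is guaranteed by the choice $\eta+\alpha<1$. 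This produces a bound
\begin{align*}
\|v(t,\theta_{-t}\omega,v_0)\|_{X_\eta} \leq C\Bigl(1 + \|f\|_{X_\alpha} + \sup_{\tau\in[-1,0]} r_\rho(\theta_\tau\omega)^\rho + \sup_{\tau\in[-1,0]}\|Z(\theta_\tau\omega)\|_{X_\alpha}^\rho\Bigr) =: R_\eta(\omega).
\end{align*}

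Since $\phi(t,\theta_{-t}\omega,u_0(\theta_{-t}\omega)) = v(t,\theta_{-t}\omega,v_0) + \sigma Z(\omega)$, the closed ball $K(\omega) := \{u \in X_\eta : \|u\|_{X_\eta} \leq R_\eta(\omega) + \sigma \|Z(\omega)\|_{X_\eta}\}$ is an absorbing set for $\phi$ in $X_\eta$, and its closure in $X_\alpha$ is compact. Temperedness of $R_\eta(\omega) + \sigma\|Z(\omega)\|_{X_\eta}$ follows from: Proposition \ref{cor:HighRegTemp} applied with exponent $\eta<\tfrac12$ (temperedness of $\|Z(\cdot)\|_{X_\eta}$ and $\|Z(\cdot)\|_{X_\alpha}$), the temperedness of $r_\rho$ established in Lemma \ref{lem2_1} via Lemma \ref{lem:TempInt} and Remark \ref{rem:temp}, and the fact that the suprema over $\tau\in[-1,0]$ of tempered continuous processes are tempered by condition \eqref{eq:TempCond}. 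Theorem \ref{thm:attractor_general} then yields the unique random attractor $\{\cA(\omega)\}_{\omega\in\Omega}$ in $X_\alpha$. The main technical obstacle is the temperedness bookkeeping: every constant in the bootstrap must be tracked and ultimately identified with a tempered random variable on the present fibre $\omega$, which is why the pullback is split one time unit before $t$ and the cocycle property is used to transfer bounds from the drifting fibre $\theta_{s-t}\omega$ to $\theta_\tau\omega$ with $\tau\in[-1,0]$.
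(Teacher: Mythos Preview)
Your approach is essentially the same as the paper's: fix $\eta\in(\alpha,\tfrac12)$ with $\eta+\alpha<1$, use the mild formula for $v$ on a unit time window together with the $L^{\rho+1}$ absorbing set from Lemma~\ref{lem2_1} and the estimate \eqref{h4} to obtain a tempered absorbing ball in $X_\eta$, and conclude via the compact embedding $X_\eta\hookrightarrow X_\alpha$ and Theorem~\ref{thm:attractor_general}. The only notable difference is cosmetic---you package the nonlinear contribution as $\sup_{\tau\in[-1,0]}r_\rho(\theta_\tau\omega)^\rho$ whereas the paper writes it as $\int_0^1\tau^{-\eta-\alpha}r_\rho(\theta_{-\tau}\omega)^{\rho+1}\,\txtd\tau$; either bound follows from the other up to constants.

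One small correction: your appeal to \eqref{eq:TempCond} to justify temperedness of $\omega\mapsto\sup_{\tau\in[-1,0]}r_\rho(\theta_\tau\omega)$ is misplaced---\eqref{eq:TempCond} is a \emph{sufficient} moment condition for temperedness of a given variable, not a statement that suprema of tempered processes over compact windows remain tempered, and the required moment bound on $r_\rho$ is nowhere established. The conclusion is nevertheless correct and follows directly from the definition: if $e^{-\gamma|s|}r_\rho(\theta_s\omega)\to 0$ for every $\gamma>0$, then for $t$ large $\sup_{\tau\in[-1,0]}r_\rho(\theta_{\tau-t}\omega)\le\varepsilon e^{\gamma(t+1)}$, whence $e^{-\gamma'|t|}\sup_{\tau\in[-1,0]}r_\rho(\theta_{\tau-t}\omega)\to 0$ for any $\gamma'>\gamma$.
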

    \begin{proof}
        We {first estimate $v$} {similarly as in the proof of Lemma~\ref{lem:bound_Xeta}} for some $\eta > \alpha,$
        \begin{equation}\label{h6_1}
            \begin{aligned}
             &\|\bao{v}(t+1,\omega,\bao{v_0}(\omega))\|_{X_\eta} = \|\bao{v}(1,\theta_t\omega,\bao{v}(t,\omega,\bao{v_0}(\omega)))\|_{X_\eta}\\
             &\leq \|U(t+1,t,\omega)\bao{v}(t,\omega,\bao{v_0}(\omega))\|_{X_\eta}\\
             &\quad + \int_{t}^{t+1} \|U(t+1,s,\omega)\baonew{(F({v(s,\omega,v_0(\omega)) + \sigma Z(\theta_s\omega)}) + f)}\|_{X_\eta}~\txtd s \\
             & \leq \|U(t+1,t,\omega)\bao{v}(t,\omega,\bao{v_0}(\omega))\|_{X_\eta}\\
             &\quad + {\tilde{C}_{\eta,\alpha}} \int_t^{t+1} (t+1-s)^{-\eta-\alpha}{e^{-\lambda(t+1-s)}}\|F({v(s,\omega,v_0(\omega)) + \sigma Z(\theta_s\omega)})\|_{X_{-\alpha}}~\txtd s \\
             &\quad + \baonew{\tilde{C}_{\eta,\alpha}\int_t^{t+1}(t+1-s)^{-\eta+\alpha} {e^{-\lambda(t+1-s)}}\|f\|_{X_\alpha}\txtd s} \\
             & \leq \|U(t+1,t,\omega)\bao{v}(t,\omega,\bao{v_0}(\omega))\|_{X_\eta} + \baonew{C\|f\|_{X_\alpha}}\\
             &\quad +\tilde{C}_{\eta,\alpha} C_F\int_{t}^{t+1} (t+1-s)^{-\eta-\alpha}{e^{-\lambda(t+1-s)}}\times \\
             &\qquad\qquad\qquad \times \bra{{{\|v(s,\omega,v_0(\omega))\|^\rho_{L^{\rho+1}(D)} + \sigma^\rho \|Z(\theta_s\omega)}\|^{\rho}_{L^{\rho+1}(D)}+1}}~\txtd s .\\
          \end{aligned} \end{equation}
          Recall that $U(t+1,t,\omega)=U(1,0,\theta_t\omega)$ as stated in Theorem~\ref{lin:rds} and that the constants $\tilde{C}_\eta=\tilde{C}_\eta(\omega)$ as well as $\tilde{C}_{\eta,\alpha}=\tilde{C}_{\eta,\alpha}(\omega)$ in the estimates below are uniformly bounded with respect to $\omega\in\Omega$ due to Assumption~\ref{ass:OperatorFamily} \textit{iii)}. 
        We now replace $\omega$ by $\theta_{-t-1}\omega$ and use \eqref{h1} to obtain
        \begin{equation}\label{h7}
            \begin{aligned}
            &\|U(t+1,t,\theta_{-t-1}\omega)\bao{v}(t,\theta_{-t-1}\omega,\bao{v_0}(\theta_{-t-1}\omega))\|_{X_\eta}\\
            &\le \tilde{C}_{\eta}\frac{e^{-\lambda(t+1-t)}}{(t+1-t)^{\eta}}\|\bao{v}(t,\theta_{-t-1}\omega,\bao{v_0}(\theta_{-t-1}\omega))\|_{\LO{2}}\\
            &\le C_\eta\bra{e^{-\delta\lambda_1 t}\|v_0(\theta_{-t-1}\omega)\|_{L^2(D)}^2 + \int_{-\infty}^0e^{\delta\lambda_1\tau}\|Z(\theta_{\tau-1}\omega)\|_{X_\alpha}^{\rho+1}~\txtd\tau + 1}^{1/2}\\
            &\le C_\eta\bigg[2e^{-\delta\lambda_1 t}\|u_0(\theta_{-t-1}\omega)\|_{L^2(D)}^2 + 2\sigma^2e^{-\delta\lambda_1 t}\|Z(\theta_{-1}\omega)\|_{\LO{2}}^2\\
            &\qquad\qquad  + e^{\delta\lambda_1}\int_{-\infty}^0e^{\delta\lambda_1\tau}\|Z(\theta_{\tau}\omega)\|_{X_\alpha}^{\rho+1}~\txtd\tau + 1\bigg]^{1/2} \\
            &\le C\bigg[e^{-\delta\lambda_1(t+1)}\|u_0(\theta_{-t-1}\omega)\|_{\LO{2}}^2 + \int_{-\infty}^0e^{\delta\lambda_1\tau}\|Z(\theta_{\tau}\omega)\|_{X_\alpha}^{\rho+1}~\txtd\tau + 1\bigg]^{1/2},\\
            \end{aligned}
        \end{equation}
     where we chose $t$ large enough such that $2\sigma^2e^{-\delta\lambda_1 t}\|Z(\theta_{-1}\omega)\|_{\LO{2}}^2\le 1$. 
For the third term on the right-hand side of \eqref{h6_1}, we \bao{also replace $\omega$ by $\theta_{-t-1}\omega$ and then} use the absorbing property of $\phi$ in $L^{\rho+1}(D)$ obtained in~\eqref{h6_0}. Hence,  we have for $t\ge T_{\rho} (\omega)$ 
        \begin{equation}\label{h8}
        \begin{aligned}
            &\tilde{C}_{\eta,\alpha} \int_{t}^{t+1} (t+1-s)^{-\eta-\alpha}\bra{\|v(s,\theta_{-t-1}\omega,v_0(\theta_{-t-1}\omega)) + \sigma Z(\theta_{-t-1}\omega))\|^{\rho}_{L^{\rho+1}(D)}+1}~\txtd s\\
            &\le \tilde{C}_{\eta,\alpha}\bra{\int_t^{t+1}(t+1-s)^{-\eta-\alpha}r_{\rho}(\theta_{s-t-1}\omega)^{\rho}~\txtd s + 1}\\
            &\le C_\eta\bra{\int_t^{t+1}(t+1-s)^{-\eta-\alpha}r_{\rho}(\theta_{s-t-1}\omega)^{\rho+1}~\txtd s + \int_t^{t+1}(t+1-s)^{-\eta-\alpha}~\txtd s + 1 }\\
            &\le C_\eta\bra{1+\int_{0}^1\tau^{-\eta-\alpha}r_{\rho}(\theta_{-\tau}\omega)^{\rho+1}~\txtd \tau}.
        \end{aligned}
        \end{equation}
        From \eqref{h6_1}, \eqref{h7} and \eqref{h8} we conclude that for all $t\ge T_{\rho}(\omega)$
        \begin{align*}
            &\|u(t+1,\theta_{-t-1}\omega,u_0(\theta_{-t-1}\omega))\|_{X_\eta}\\
            &\bao{\le \|v(t+1,\theta_{-t-1}\omega,v_0(\theta_{-t-1}\omega))\|_{X_\eta} +\sigma\|Z(\omega)\|_{X_\eta}} \\
            &\le C_\eta e^{-\delta \lambda_1 (t+1)}\|u_0(\theta_{-t-1}\omega)\|_{\LO{2}} + C +C\|f\|_{X_\alpha}+ \bao{\sigma \|Z(\omega)\|_{X_\eta}}\\
            &\quad + C\bra{\int_{-\infty}^0e^{\delta\lambda_1\tau}\|Z(\theta_{\tau}\omega)\|_{X_\alpha}^{\rho+1}~\txtd\tau}^{1/2} + C_\eta\bra{1+\int_0^1\tau^{-\eta-\alpha}r_{\rho}(\theta_{-\tau}\omega)^{\rho+1}~\txtd\tau}\\
            &=: C_\eta e^{-\delta \lambda_1 (t+1)}\|u_0(\theta_{-t-1}\omega)\|_{\LO{2}} + \frac 12 r_{\eta}(\omega),
        \end{align*}
    where 
    \begin{align*}
        r_\eta(\omega):= C + C\bra{\int_{-\infty}^0e^{\delta\lambda_1\tau}\|Z(\theta_{\tau}\omega)\|_{X_\alpha}^{\rho+1}~\txtd\tau}^{1/2} + \bao{\sigma \|Z(\omega)\|_{X_{\eta}}}+C\|f\|_{X_\alpha}\\
            \quad + C_\eta\bra{1+\int_0^1\tau^{-\eta-\alpha}r_{\rho}(\theta_{-\tau}\omega)^{\rho+1}~\txtd\tau}.
    \end{align*}
    Note that $\eta<1/2$. Hence, using the temperedness of $\|Z(\omega)\|_{X_\eta}$ and of $r_\rho$ it follows that $r_\eta$ is tempered.
    Since $u_0(\theta_{-t-1}\omega)\in B(\theta_{-t-1}\omega)$, there exists $T_\eta(\omega)>0$ such that 
    \begin{equation*}
        \|\phi(t,\theta_{-t}\omega,u_0(\theta_{-t}\omega))\|_{X_\eta} \le r_\eta(\omega), 
    \end{equation*}
    for all $t\ge T_\eta(\omega)+1$. Consequently, the RDS $\phi$ possesses an absorbing set in $X_\eta$. Since $\eta>\alpha$, the embedding $X_\eta \hookrightarrow X_\alpha$ is compact, and we conclude that $\phi$ possesses a unique random attractor $\{\mathcal{A}(\omega)\}_{\omega\in\Omega}$ in $X_\alpha$ according to Theorem \ref{thm:attractor_general}.
    \end{proof}

   \subsection{Generalization for higher-order elliptic operators}\label{ho}
Standard examples of operators  satisfying Assumptions~\ref{ass:OperatorFamily} are uniformly elliptic  operators  with Dirichlet, Neumann or Robin boundary conditions. The following example was given in~\cite{kuehn2021random}, see also~\cite{MShen1,book}. 
It generalizes the results of the previous section for higher-order uniformly elliptic random operators. More precisely, for $m\in \mathbb N$, we consider  the following stochastic partial differential equation
\begin{equation}\label{general_elliptic}
\left\{
    \begin{aligned}
    &\mathrm{d}u = \bra{\mathcal{E}_m(\theta_t\omega,x,\txtD)u + F(u) + \baonew{f(x)}}\mathrm{d}t + \sigma \txtd W_t(x,\omega), &&x\in D, \omega\in\Omega,\\
    &u(x,t,\omega) = 0, &&x\in\partial D, \omega\in\Omega,\\
    &u(x,0,\omega) = u_0(x,\omega), &&x\in D, \omega\in\Omega,
\end{aligned}
\right.
\end{equation}
where $D\subset\mathbb{R}^{N}$ is a bounded domain with smooth boundary $\partial D$, and the differential operator is defined as
\begin{equation*} \mathcal{E}_m(\theta_t\omega,x,\txtD)u:=\sum\limits_{|k_1|,|k_2|\leq m}\txtD^{k_1}(a_{k_1,k_2}(\theta_t\omega,x)\txtD^{k_2}u),\qquad \omega\in\Omega,~ x\in D,~t\in\mathbb{R}.
\end{equation*}
We assume that the coefficient functions satisfy the following properties:
	\begin{enumerate}
		\item The coefficients $a_{k_1,k_2}$ are bounded and symmetric. More precisely, there exists a constant $K\geq 1$ such that 
		$$
		|a_{k_1,k_2}(\theta_t\omega,x)|\leq K\qquad \text{for all}\ |k_1|,|k_2|\leq m,\ t\in\mathbb{R}, x\in D, \omega\in\Omega
		$$ 
		and $$a_{k_1,k_2}(\cdot,\cdot)=a_{k_2,k_1}(\cdot,\cdot)~~\mbox{ for }~|k_1|,|k_2|\leq m.
        $$ 
\item        The coefficients form a stochastic process $(t,\omega)\mapsto a_{k_1,k_2}(\theta_{t}\omega,\cdot{})\in C^{m}(\overline{D})$ with H\"older continuous trajectories. This means that there exists  $\nu\in(0,1]$ such that 
					\begin{align*}
		|a_{k_1,k_2}(\theta_{t}\omega,x)-a_{k_1,k_2}(\theta_{s}\omega,x)|\leq \overline{c}_{2}|t-s|^{\nu}\quad\mbox{ for all } t\in\mathbb{R},~ x\in\overline{D},~  |k_1|, |k_2|\leq m,
		\end{align*}
		for some constant $\overline{c}_{2}>0$.~Furthermore, the mapping $t\mapsto \txtD^{k} a_{k_1,k_2}(\theta_t\omega,x)$ 
        is continuous for $|k|,|k_1|,|k_2|\leq m$, $\omega\in\Omega$ and $x\in D$.
			\item The operator $\mathcal{E}_m$ is uniformly elliptic in $D$, i.e. there exists a constant $\bar c>0$ such that 
		$$ 
		\sum\limits_{|k_1|=|k_2|=m}a_{k_1,k_2}(\theta_t\omega,x)\xi_{k_1}\xi_{k_2}\geq \overline{c}|\xi|^{2m}\qquad \mbox{ for all } t\in\mathbb{R},~x\in\overline{D},~ \xi\in\mathbb{R}^{N}.
		$$
			\end{enumerate} 
    We now introduce the random partial differential operator
on the Banach space $X:=L^{2}(D)$.
    ~To this end we define the $L^{2}$-realization $A_2$ of $\mathcal{E}_m(\cdot,\cdot,\txtD)$ as 
	\begin{align*}
	&A_{2}(\theta_t\omega)u:=\mathcal{E}_m(\theta_t\omega,x,\txtD)u\qquad  \mbox{ for }
	u\in \mathcal{D}_A, \mbox{ where }\\
	&\mathcal{D}_A:=D(A_{2}(\theta_t\omega))=H^{2m}(D)\cap H^{m}_{0}(D).\nonumber 
	\end{align*}
  Then $(A_2(\theta_t\omega))_{\omega\in\Omega, t\in\R}$ satisfies Assumption~\ref{ass:OperatorFamily} and therefore generates a parabolic evolution family on $X$ as established in Theorem~\ref{thm:ExistEvolutionFam}. For the nonlinearity, we assume the following: 
    \begin{itemize}
        \item[(i)] $F:\mathbb R\to \mathbb R$ is locally Lipschitz, and either $N\le 2m$ or $N\ge 2m+1$ and there is $C>0$ such that
        \begin{equation*}
            |F(u)-F(v)| \le C_F|u-v|\bra{|u|^{\mu-1} + |v|^{\mu-1}} \quad \forall u, v\in \mathbb R,
        \end{equation*}
        where $\mu$ satisfies
        \begin{equation*}
            \mu < \mu_{\text{critical}}:= \begin{cases}
                +\infty & \text{ if } N \le 2m,\\
                N/(N-2m) & \text{ if } N \ge 2m+1.
            \end{cases}
        \end{equation*}

        \item[(ii)] There are constants $C_0, C_1$ such that
        \begin{equation*}
            F(u)u \le -C_0|u|^{1+\mu} + C_1, \quad \forall u\in \mathbb R
        \end{equation*}
        with $\mu$ in (i).
    \end{itemize}

    Using the same arguments as in the previous section one can show the following result.
    \begin{theorem}
        Suppose that the above assumptions on the differential operator $\mathcal{E}_m$ and the nonlinearity $F$ hold. Fix $\alpha\in (0,1/2)$ such that
        \begin{equation*}
            \frac{N(\mu - 1)}{4(\mu + 1)} \le \alpha < \min\left\{\frac{N}{4}, \frac 12 \right\} \quad \text{ and } \quad X_\alpha \hookrightarrow L^{2\mu}(D).
        \end{equation*}
        Then for any initial data $u_0(\omega)\in X_\alpha$, \baonew{and any $f\in X_\alpha$} the equation \eqref{general_elliptic} has a unique global solution in $X_\alpha$. Moreover, the random dynamical system $\phi$ generated by \eqref{general_elliptic} as
        \begin{equation*}
            \begin{aligned}
                \phi: \mathbb R_+ \times \Omega \times X_\alpha &\to X_\alpha, \quad
            (t,\omega,u_0(\omega)) \mapsto \phi(t,\omega,u_0(\omega)):= u(t,\omega,u_0(\omega))
            \end{aligned}
        \end{equation*}
        possesses a \bao{unique} random attractor $\{\mathcal{A}(\omega)\}_{\omega\in\Omega}$ in $X_\alpha$.
    \end{theorem}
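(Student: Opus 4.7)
The plan is to verify that this higher-order situation fits into the abstract framework developed in Sections~\ref{sec:p}--\ref{main}, so that each step of the previous proofs can be transferred almost verbatim, with the exponent $\rho$ replaced by $\mu$ and the fractional power spaces reinterpreted with respect to $A_2(\theta_t\omega)$. First, I would check that $(A_2(\theta_t\omega))_{t\in\R,\omega\in\Omega}$ satisfies Assumption~\ref{ass:OperatorFamily}: sectoriality and boundedness of imaginary powers are standard for $L^2$-realizations of $2m$-order uniformly elliptic operators with smooth coefficients; the H\"older continuity in $t$ (Kato--Tanabe condition) and the corresponding estimate for the adjoint follow from the assumption that $t\mapsto a_{k_1,k_2}(\theta_t\omega,\cdot)\in C^m(\overline D)$ is H\"older continuous and symmetric; the compact embedding $\cD_A=H^{2m}(D)\cap H_0^m(D)\hookrightarrow L^2(D)$ gives the compact inverse; and the structural assumption~\textit{vi)} is built into the coefficients. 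Exponential stability (Assumption~\ref{ass:ExpStable}) follows from uniform ellipticity and the homogeneous Dirichlet boundary conditions. Consequently, Theorem~\ref{thm:ExistEvolutionFam} and Lemma~\ref{lem:regularity_process} apply, and Section~\ref{ou} yields a stationary pathwise mild Ornstein--Uhlenbeck process $Z$ which is tempered in $X_\beta$ for all $\beta\in[0,1/2)$, hence in $L^q(D)$ for every $q$ with $X_{1/2-\varepsilon}\hookrightarrow L^q(D)$.

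Next, I would prove the analogue of Lemma~\ref{lemma:F} for the Nemytskii operator induced by $F$: under the growth hypothesis~(i) one obtains that $F$ is locally Lipschitz from $X_{1/2-\varepsilon}$ into $X_{-s/2}$ for $s\in[0,1)$, using the Sobolev embedding $X_{1/2-\varepsilon}\hookrightarrow L^{q}(D)$ that is now governed by the order $2m$ of the operator (one has $X_{1/2-\varepsilon}\hookrightarrow H^{m(1-2\varepsilon)}(D)$ and the subcritical threshold is $N/(N-2m)$ as in~(i)). With this in hand, Theorem~\ref{local:existence} gives a maximal local pathwise mild solution $u\in C([0,\tau);X_\alpha)$ for any $u_0(\omega)\in X_\alpha$ and $f\in X_\alpha$, where $\alpha$ is chosen so that $X_\alpha\hookrightarrow L^{2\mu}(D)$ and $\tfrac{N(\mu-1)}{4(\mu+1)}\le\alpha<\min\{N/4,1/2\}$; the existence of such an $\alpha$ is guaranteed by the subcritical condition on $\mu$.

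For global existence I would set $v=u-\sigma Z(\theta_\cdot\omega)$ and repeat the bootstrap of Subsection~\ref{global}. Testing the evolution for $v$ against $v$ in $L^2(D)$ now produces the coercive term $\overline c\|v\|_{H_0^m(D)}^2$ in place of $\delta\|v\|_{H_0^1(D)}^2$ by uniform ellipticity of order $2m$; combined with the dissipativity~(ii) this yields an $L^2$-bound and an $L^{\mu+1}$-integral bound as in \eqref{L2_estimate}--\eqref{h1_1}. Testing against $|v|^{\mu-1}v$ produces a non-negative higher-order term $\mu\int |v|^{\mu-1}\nabla v\cdot(\ldots)\txtd x$ whose justification requires checking that the various products lie in $L^1(D)$; the subcritical assumption $\mu<N/(N-2m)$ (instead of $N/(N-4)$) is exactly what is needed here. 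The $X_\eta$-bootstrap of Lemma~\ref{lem:bound_Xeta} then carries over, using the $X_\alpha\hookrightarrow L^{2\mu}(D)$ embedding to control $\|F(v+\sigma Z)\|_{X_{-\alpha}}$. This yields global existence, and $\phi(t,\omega,u_0(\omega)):=u(t,\omega,u_0(\omega))$ defines a continuous random dynamical system on $X_\alpha$.

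Finally, the attractor is obtained by following Subsection~\ref{sec:attr} line by line: the estimates in Lemma~\ref{lem1} and Lemma~\ref{lem2_1} give absorbing sets in $L^2(D)$ and $L^{\mu+1}(D)$ with tempered radii $r_2(\omega),r_\mu(\omega)$, and the mild-solution splitting in the proof of Theorem~\ref{thm:attractor} yields a tempered absorbing set in $X_\eta$ for some $\eta\in(\alpha,1/2)$; compactness of $X_\eta\hookrightarrow X_\alpha$ combined with Theorem~\ref{thm:attractor_general} produces the unique random attractor $\{\cA(\omega)\}_{\omega\in\Omega}$ in $X_\alpha$. The main obstacle I anticipate is the justification of the $L^{\mu+1}$ energy identity in the higher-order setting, since one must show that $|v|^{\mu-1}v$ is an admissible test function and that the resulting higher-order nonlinear quadratic form $\int|v|^{\mu-1}\nabla v\cdot(\cdots)\nabla v\,\txtd x$ (or its $2m$-order analogue) is non-negative and integrable; here one uses the symmetry of the $a_{k_1,k_2}$, uniform ellipticity, and the subcriticality threshold $N/(N-2m)$ together with the extra regularity of $v\in X_\eta$ from the local Lemma~\ref{lem:local_v}.
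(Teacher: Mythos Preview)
Your proposal is correct and matches the paper's approach exactly: the paper gives no detailed proof for this theorem, stating only that ``using the same arguments as in the previous section one can show the following result,'' and your outline faithfully reproduces that programme with $\rho$ replaced by $\mu$ and the fractional power scale adapted to the $2m$-order operator. The technical concern you raise at the end --- that for $m\ge 2$ the term $\langle -A_2(\theta_t\omega)v,\,|v|^{\mu-1}v\rangle$ obtained when testing with $|v|^{\mu-1}v$ is no longer a pointwise-nonnegative quadratic form in $\nabla v$ and hence the sign argument in \eqref{h2} does not transfer verbatim --- is a genuine subtlety that the paper does not address; you have identified the one place where ``same arguments'' is not entirely automatic.
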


\section*{Conflict of interest statement}
The authors have no conflicts of interest to declare. All co-authors have seen and agree with the contents of the manuscript and there is no financial interest to report. 
\section*{Data availability}
No data was used for the research described in the article.


\begin{thebibliography}{GAMS10}

\bibitem[Ama95]{Amann1995}
H.~Amann.
\newblock {\em Linear and quasilinear parabolic problems. {Vol}. 1: {Abstract}
  linear theory}, volume~89 of {\em Monogr. Math.}
\newblock Basel: Birkh{\"a}user, 1995.

\bibitem[Arn98]{Arnold1998}
L.~Arnold.
\newblock {\em Random dynamical systems}.
\newblock Springer Monogr. Math. Berlin: Springer, 1998.

\bibitem[BCNS22]{Caraballo}
M.~Belluzi, T.~Caraballo, M.J.D. Nascimento, and K.~Schiabel.
\newblock Smoothing effect and asymptotic dynamics of nonautonomous parabolic
  equations with time-dependent linear operators.
\newblock {\em J. Differential Equations}, 314:808--849, 2022.

\bibitem[BGAS14]{Bessaih2014}
H.~Bessaih, M.J. Garrido-Atienza, and B.~Schmalfuss.
\newblock Pathwise solutions and attractors for retarded {SPDEs} with time
  smooth diffusion coefficients.
\newblock {\em Discrete Contin. Dyn. Syst.}, 34(10):3945--3968, 2014.

\bibitem[BLW09]{Lu}
P.W. Bates, K.~Lu, and B.~Wang.
\newblock Random attractors for stochastic reaction-diffusion equations on
  unbounded domains.
\newblock {\em J. Differential Equations}, 246(2):845--869, 2009.

\bibitem[BS24]{BS}
A.~Blessing and T.~Seitz.
\newblock Existence and regularity of random attractors for stochastic
  evolution equations driven by rough noise.
\newblock {\em J. Dyn. Diff. Equ.}, 2024.

\bibitem[CDLS10]{CDLS}
T.~Caraballo, J.~Duan, K.~Lu, and B.~Schmalfu\ss{}.
\newblock Invariant manifolds for random and stochastic partial differential
  equations.
\newblock {\em Adv. Nonlinear Stud.}, 10(1):23--52, 2010.

\bibitem[CKY11]{crauelkloeden}
H.~Crauel, P.E. Kloeden, and M.~Yang.
\newblock Random attractors of stochastic reaction-diffusion equations on
  variable domains.
\newblock {\em Stoch. Dyn.}, 11(2-3):301--314, 2011.

\bibitem[CLR13]{carvalho2012attractors}
A.N. Carvalho, J.A. Langa, and J.C. Robinson.
\newblock {\em Attractors for infinite-dimensional non-autonomous dynamical
  systems}, volume 182 of {\em Appl. Math. Sci.}
\newblock New York: Springer, 2013.

\bibitem[CSY15]{cao}
D.~Cao, C.~Sun, and M.~Yang.
\newblock Dynamics for a stochastic reaction-diffusion equation with additive
  noise.
\newblock {\em J. Differential Equations}, 259(3):838--872, 2015.

\bibitem[CW24]{Wang}
Z.~Chen and B.~Wang.
\newblock Long-term dynamics of fractional stochastic delay
  reaction–diffusion equations on unbounded domains.
\newblock {\em Stoch. Partial Differ. Equ. Anal. Comput.}, pages 1--63, 2024.

\bibitem[Dju21]{djurdjevac2021linear}
A.~Djurdjevac.
\newblock Linear parabolic problems in random moving domains.
\newblock {\em SIAM/ASA J. Uncertain. Quantif.}, 9(2):848--879, 2021.

\bibitem[FS96]{FlSchm}
F.~Flandoli and B.~Schmalfu{\ss}.
\newblock Random attractors for the {3D} stochastic {N}avier-{S}tokes equation
  with multiplicative white noise.
\newblock {\em Stochastics Stochastics Rep.}, 51(1-2):21--45, 1996.

\bibitem[GAMS10]{fractional}
M.J. Garrido-Atienza, B.~Maslowski, and B~Schmalfu{\ss}.
\newblock Random attractors for stochastic equations driven by a fractional
  {B}rownian motion.
\newblock {\em Internat. J. Bifur. Chaos Appl. Sci. Engrg.}, 20(9):2761--2782,
  2010.

\bibitem[Ges13]{Gess2}
B.~Gess.
\newblock Random attractors for degenerate stochastic partial differential
  equations.
\newblock {\em J. Dynam. Differential Equations}, 25(1):121--157, 2013.

\bibitem[Ges14]{Gess1}
B.~Gess.
\newblock Random attractors for stochastic porous media equations perturbed by
  space-time linear multiplicative noise.
\newblock {\em Ann. Probab.}, 42(2):818--–864, 2014.

\bibitem[GLR11]{Gess2011}
B.~Gess, W.~Liu, and M.~R{\"o}ckner.
\newblock Random attractors for a class of stochastic partial differential
  equations driven by general additive noise.
\newblock {\em J. Differential Equations}, 251(4-5):1225--1253, 2011.

\bibitem[GLS20]{Andre}
B.~Gess, W.~Liu, and A.~Schenke.
\newblock Random attractors for locally monotone stochastic partial
  differential equations.
\newblock {\em J. Differential Equations}, 269(4):3414--3455, 2020.

\bibitem[GW20]{GuWang}
A.~Gu and B.~Wang.
\newblock Random attractors of reaction-diffusion equations without uniqueness
  driven by nonlinear colored noise.
\newblock {\em J. Math. Anal. Appl.}, 486(1, 123880):1--23, 2020.

\bibitem[Hai23]{Hairer}
M.~Hairer.
\newblock An introduction to {S}{P}{D}{E}s.
\newblock {\em Lecture notes, available at
  https://www.hairer.org/notes/SPDEs.pdf}, 2023.

\bibitem[KNS21]{kuehn2021random}
C.~Kühn, A.~Neam{\c{t}}u, and S.~Sonner.
\newblock Random attractors via pathwise mild solutions for stochastic
  parabolic evolution equations.
\newblock {\em J. Evol. Equ.}, 21(2):2631--2663, 2021.

\bibitem[LCK17]{liu2017long}
L.~Liu, T.~Caraballo, and P.~Kloeden.
\newblock Long time behavior of stochastic parabolic problems with white noise
  in materials with thermal memory.
\newblock {\em Revista Matem{\'a}tica Complutense}, 30:687--717, 2017.

\bibitem[LNS18]{lns}
K.~Lu, A.~Neam\c{t}u, and B.~Schmalfu{\ss}.
\newblock On the {O}seledets {S}plitting for infinite-dimensional random
  dynamical systems.
\newblock {\em Discrete Contin. Dyn. Syst. Ser. B}, 23(3):1219--1242, 2018.

\bibitem[LYZ23]{Lin2023}
X.~Lin, Q.~Yang, and C.~Zeng.
\newblock Random attractors for rough stochastic partial differential
  equations.
\newblock {\em J.~Differential Equations}, 371:50--82, 2023.

\bibitem[MS03]{MShen1}
J.~Mierczy\'nski and W.~Shen.
\newblock Exponential separation and principal {L}yapunov exponent/spectrum for
  random/nonautonomous parabolic equations.
\newblock {\em J. Differential Equations}, 191(1):175--205, 2003.

\bibitem[MS08]{book}
J.~Mierczy\'nski and W.~Shen.
\newblock {\em Spectral theory for random and nonautonomous parabolic equations
  and applications}, volume 139 of {\em Chapman \& Hall/CRC Monogr. Surv. Pure
  Appl. Math.}
\newblock CRC Press, Boca Raton, FL, 2008.

\bibitem[MS13]{MShen2}
J.~Mierczy\'nski and W.~Shen.
\newblock Principal {L}yapunov exponents and principal {F}loquet spaces of
  positive random dynamical systems. {I}. {General Theory}.
\newblock {\em Trans. Amer. Math. Soc.}, 365(10):5329--5365, 2013.

\bibitem[Nee08]{ISEM}
J.M.A.M Neerven.
\newblock Stochastic evolution equations, 2008.
\newblock ISEM Lecture Notes 2007/08.

\bibitem[PV14]{Pronk2014}
M.~Pronk and M.C. Veraar.
\newblock A new approach to stochastic evolution equations with adapted drift.
\newblock {\em J. Differential Equations}, 256(11):3634--3683, 2014.

\bibitem[Tan79]{Tanabe1979}
H.~Tanabe.
\newblock {\em Equations of evolution}, volume~6 of {\em Monogr. Stud. Math.}
\newblock Pitman, Boston, MA, 1979.

\bibitem[Tan16]{Bao}
B.Q. Tang.
\newblock Regularity of random attractors for stochastic reaction-diffusion
  equations on unbounded domains.
\newblock {\em Stoch. Dyn.}, 16(1, 1650006):1--29, 2016.

\end{thebibliography}
\end{document}